\numberwithin{equation}{section}
\newtheorem{theorem}{Theorem}
\newtheorem{proposition}{Proposition}[section]
\newtheorem{corollary}[proposition]{Corollary}
\newtheorem{definition}[proposition]{Definition}
\newtheorem{lemma}[proposition]{Lemma}
\newtheorem{remark}[proposition]{Remark}
\newcommand{\diver}{\operatorname{div}}
\newcommand{\dd}{\mathrm{d}}
\begin{document}

\title[Pohozaev-type identities]{Pohozaev-type identities for a pseudo-relativistic Schr\"odinger operator and applications}
	\thanks{H. Bueno is the corresponding author; G.A. Pereira  received research grants by PNPD/CAPES/Brazil; A.H. Souza Medeiros by CNPq/Brazil.}

\author{H. Bueno}
\address{all authors - Departmento de Matem\'atica, Universidade Federal de  Minas Gerais, 31270-901 - Belo Horizonte - MG, Brazil}
\email{\begin{align*}H.\ Bueno:\ hamilton.pb@gmail.com&\\ 
	G.A.\ Pereira:\ gilbertoapereira@yahoo.com.br&\\A.H.\ Souza \ Medeiros:\	aldomedeiros@ufmg.br&\end{align*}}
\author{G.A. Pereira}
\author{A.H. Souza Medeiros}

\subjclass[2010]{35J20, 35Q55, 35R11, 35B38} \keywords{Variational methods, pseudo-relativistic Schrödinger operator, Pohozaev identity and manifold}

\begin{abstract}
In this paper we prove a Pohozaev-type identity for both the problem $(-\Delta+m^2)^su=f(u)$ in $\mathbb{R}^N$ and its harmonic extension to $\mathbb{R}^{N+1}_+$ when $0<s<1$. So, our setting includes the pseudo-relativistic operator $\sqrt{-\Delta+m^2}$ and the results showed here are original, to the best of our knowledge. The identity is first obtained in the extension setting and then ``translated" into the original problem. In order to do that, we develop a specific Fourier transform theory for the fractionary operator $(-\Delta+m^2)^s$, which lead us to define a weak solution $u$ of the original problem if the identity
\begin{equation}\label{defsola}\int_{\mathbb{R}^N}(-\Delta+m^2)^{s/2}u(-\Delta+m^2)^{s/2}v\dd x=\int_{ \mathbb{R}^N}f(u)v\dd x\tag{S}\end{equation} is satisfied by all $v\in H^{s}(\mathbb{R}^N)$. The obtained Pohozaev-type identity is then applied to prove both a result of nonexistence of solution to the case $f(u)=|u|^{p-2}u$ if $p\geq 2^{*}_s$ and a result of existence of a ground state, if $f$ is modeled by $\kappa u^3/(1+u^2)$, for a constant $\kappa$. In this last case, we apply the Nehari-Pohozaev manifold introduced by D. Ruiz. Finally, we prove that positive solutions of $(-\Delta+m^2)^su=f(u)$ are radially symmetric and decreasing with respect to the origin, if $f$ is modeled by functions like $t^\alpha$, $\alpha\in(1,2^{*}_s-1)$ or $t\ln t$.
\end{abstract}
\maketitle
\tableofcontents
\section{Introduction}\label{intro}
The pseudo-relativistic Schr\"odinger operator $\sqrt{-\Delta+m^2}$ is associated with the Hamiltonian $\mathcal{H}=\sqrt{p^2c^2+m^2c^4}$ of a free relativistic particle of mass $m$ by the usual quantization $p\to i\hbar\Delta$, changing units so that $\hbar=1$ and $c=1$. A good exposition of the basic properties of the operator $\sqrt{-\Delta+m^2}$ can be found in \cite{LiebLoss}, see also \cite{Stein}. 

If $m>0$ denotes the mass of bosons in units, the equation
\begin{equation}\label{original1}\left\{
\begin{array}{rcll}
i \partial_t \psi &=&\left(\sqrt{-\Delta+m^2}-m\right)\psi+ \left(W*|\psi|^2\right)\psi &\text{in }\ \mathbb{R}^{N},\\
u(x,0)&=&\phi(x),  &x\in \mathbb{R}^{N} \end{array} \right.
\end{equation}
where $N\geq 2$ and $*$ denotes convolution, was used to describe the dynamics of pseudo-relativistic boson stars in astrophysics. See \cite{Cho,CSS,Elgart,Lieb2} for more details. The existence of solitary waves solutions for \eqref{original1}
\[\psi(x)=e^{i\mu t}\varphi(x)\]
with $\varphi$ satisfying the following pseudo-relativistic Hartree equation
\begin{equation}\label{Belchior}\sqrt{-\Delta+m^2}\varphi-m\varphi-(W*|\varphi|^2)\varphi=-\mu\varphi\end{equation}
was first established by Lieb and Yau \cite{Lieb2}, in the case $W(x)=|x|^{-1}$. 

For generalizations or variations on \eqref{Belchior}, the existence of  ground state solutions was obtained by various authors \cite{BBMP,Cingolani,CSS,ZelatiNolasco,ZelatiNolasco2,MaZhao,Moroz1}. A good survey on equations like \eqref{Belchior} is given in \cite{Moroz2}.

\noindent \textbf{Comparison between the operators $(-\Delta)^s$ and $(-\Delta+m^2)^s$.}
At first sight, one supposes that the treatment of both operators might be similar. In fact, there are huge differences between them. 
\begin{enumerate}
\item [(a)] $(-\Delta)^s$ is $2s$-homogeneous with respect to dilatations, that is, $(-\Delta)^s u(\lambda x)=\lambda^{2s}(-\Delta)^su(x)$, while such a property is not valid for $(-\Delta+m^2)^s$.
\item [(b)] As will see, $(-\Delta+m^2)^s$ generates a norm in $H^s(\mathbb{R}^N)$ and this is not the case for $(-\Delta)^s$. In consequence, the adequate spaces to handle both operators are quite different. 
\item [(c)] Some results about fractionary Laplacian spaces are now standard (see \cite{Demengel,Guide,Bisci}), but not so easy to find for $(-\Delta+m^2)^s$. See, however, \cite{Ambrosio,VFelli}.
\end{enumerate}

\noindent \textbf{Why to handle $(-\Delta+m^2)^s$ instead of $\sqrt{-\Delta+m^2}$.}

In this paper we deal with a generalized version of the operator $\sqrt{-\Delta+m^2}$, namely the operator $T(u)=(-\Delta+m^2)^su$, $0<s<1$. We study the problem \begin{equation}\label{original}(-\Delta+m^2)^su=f(u),\quad x\in \mathbb{R}^N.\end{equation}

Concerning the applications of equation \eqref{original}, we recall that fractionary Laplacian operators are the infinitesimal generators of L\'evy stable diffusion processes. In particular, $(-\Delta+m^2)^s-m^{2s}$ is called the $2s$-stable relativistic process, see \cite{Ambrosio,Carmona,Ryznar}. Stable diffusion processes have application in several areas such as anomalous diffusion of plasmas, probability, finances and populations dynamics, see \cite{Applebaum}.

Our approach applies the Dirichlet-to-Neumann operator, that is, we consider the extension problem naturally related to \eqref{original} for the operator $(-\Delta+m^2)^s$, thus resting on the celebrated papers by Cabr\'e and Sol\`a-Morales \cite{Cabre} and Caffarelli and Silvestre \cite{Caffarelli}.  

We state a general result about the extension problem:

\textbf{Theorem} (Stinga-Torrea \cite{StingaTorrea})\textit{ Let $h\in Dom(L^s)$ and $\Omega$ be an open subset of $\mathbb{R}^N$. A solution of the extension problem
	\[\left\{\begin{array}{ll}
	-L_x u+\frac{1-2s}{y}u_y+u_{yy}=0 &\text{in }\ \Omega\times (0,\infty)\\
	u(x,0)=h(x) &\text{on }\ \Omega\times \{y=0\}
	\end{array}\right.\]
	is given by
	\[u(x,y)=\frac{1}{\Gamma(s)}\int_{0}^\infty e^{-tL}(L^s h)(x)e^{-\frac{y^2}{4t}}\frac{\dd t}{t^{1-s}}\]
	and satisfies
	\[\lim_{y\to 0}y^{1-2s}u_y(x,y)=\frac{2s\Gamma(-s)}{4^s\Gamma(s)}(L^s h)(x).\]}\vspace*{.2cm}

Applying this result to \eqref{problem}, the extension problem produces, for $(x,y)\in \mathbb{R}^N\times (0,\infty)=\mathbb{R}^{N+1}_+$,
\begin{equation}\left\{\begin{array}{ll}
\displaystyle\Delta_x w+\frac{1-2s}{y}w_y+w_{yy}-m^2w=0 &\text{in }\ \mathbb{R}^{N+1}_+\vspace{.1cm} \\
\displaystyle\lim_{y\to 0^+}\left(-y^{1-2s}\frac{\partial w}{\partial y}\right)=k_s f(w(x,0)), &\text{in }\ \mathbb{R}^N\times \left\{ 0\right\} \simeq\mathbb{R}^N,
\end{array}\right.\tag{$P$}\label{P}
\end{equation}
where \begin{equation}\label{ks}
k_s=\frac{2s\Gamma(-s)}{4^s\Gamma(s)}=\frac{2^{1-2s}\Gamma(1-s)}{\Gamma(s)}.
\end{equation}

Applying Fourier transforms with respect to $x\in\mathbb{R}^N$, we are lead to the Bessel function attached to this problem, which is trivial in the case of $s=1/2$, but not for an arbitrary $s\in (0,1)$. (See discussion below.)

The natural setting for problem \eqref{P} is the Hilbert space
\[H^1(\mathbb{R}^{N+1}_+,y^{1-2s})=\left\{u\colon \mathbb{R}^{N+1}_+\to\mathbb{R}: \iint_{\mathbb{R}^{N+1}_+}\left(|\nabla w|^2+|w|^2\right)y^{1-2s}\dd x\dd y<\infty \right\}\]
endowed with the norm
\[\|w\|_s=\left(\iint_{\mathbb{R}^{N+1}_+}\left(|\nabla w(x,y)|^2+|w(x,y)|^2\right)y^{1-2s}\dd x\dd y\right)^{\frac{1}{2}},\]
see \cite{BMP} for details. 

Observe that, in the case $s=1/2$, the problem \eqref{P} is set in a much simpler space, since the weight $y^{1-2s}$ does not appear in the definition of $H^1(\mathbb{R}^{N+1}_+)$. We will return to this difference later on.

\noindent\textbf{The definition of solution.}
Although the definition of solution for \eqref{defsola} is easily obtainable in terms of  Fourier transforms as in M. M. Fall and V. Felli \cite{VFelli}, we do think that our definition is more natural, since it deals with a partial integration and remembers the one used in fractionary spaces.

This lead us to develop a specific Fourier theory for the fractionary operator $(-\Delta+m^2)^s$ in the case $0<s<1$, with an approach influenced  by Stinga and Torrea \cite{StingaTorrea} and also by Stein \cite{Stein}.

Therefore, we justify the definition  \begin{equation*}(-\Delta+m^2)^s  f(x)=\mathcal{F}^{-1}\left((m^2+4\pi^2|\cdot|^2)^s  \hat{f}\right)(x),\end{equation*}
see Section \ref{prelim}.

Proceeding with our analysis, we show that $(-\Delta+m^2)^s$ is symmetric in the space $\mathcal{S}(\mathbb{R}^N)$ and we finally achieve the natural definition of a weak solution to the problem \eqref{original}.
\begin{definition}\label{defsolution}
	A function $u\in H^{s}(\mathbb{R}^N)$ is a solution of \eqref{prn} if
	\begin{equation}\label{innerproduct}\int_{ \mathbb{R}^N}(-\Delta+m^2)^{s/2}u(-\Delta+m^2)^{s/2}v\dd x=\int_{ \mathbb{R}^N}f(u)v\dd x\end{equation}
	for all $v\in H^{s}(\mathbb{R}^N)$.
\end{definition}

The space $H^s(\mathbb{R}^N)$ will be considered with the norm generated by the inner product defined by the left-hand side of \eqref{innerproduct}:
\[\|u\|^2=\int_{ \mathbb{R}^N}\left|(m^2-\Delta)^{s/2}u(x)\right|^2\dd x=\int_{\mathbb{R}^N}(m^2+4\pi^2|\xi|^2)^s|\hat{u}(\xi)|^2\dd\xi.
\] 

\noindent\textbf{A Pohozaev-type identity for problem \eqref{P}.}

After that, we establish a Pohozaev-type identity for the extension problem. With additional (but natural) hypotheses, it is not difficult to consider $f(x,u)$ instead of $f(u)$. We recall that a local approach, based on integration in balls $B^+_r\subset \mathbb{R}^{N+1}_+$, was obtained in \cite{VFelli}.

The Pohozaev-type identity is a valuable tool when proving results of non-existence of non-trivial solutions for non-linear problems. It is also associated with the Pohozaev manifold generated by this identity, which is a precious technique in solving problems when either the (PS)-condition or the mountain pass geometry are difficult to be verified, see \cite{LehrerMaia,Ruiz}.

Although arguments leading to Pohozaev-type identities are beginning to be standard, we present its proof in the case of the extension problem \eqref{P} (see by X. Chang and Z-Q. Wang \cite{ChangWang}). 
\begin{multline*}
\frac{N-2s}{2}\iint_{ \mathbb{R}^{N+1}_+}y^{1-2s}|\nabla w|^2\dd x\dd y+m^2\frac{N+2-2s}{2}\iint_{\mathbb{R}^{N+1}_+}y^{1-2s}w^2\dd x\dd y\hfill\end{multline*}
\vspace*{-.3cm}\begin{align}
&=Nk_s\int_{ \mathbb{R}^{N+1}_+}F(w(x,0))\dd x\dd y,\label{Pohoextintrod}
\end{align}
where $F(t) = \int^t_0 f(\tau) \dd\tau$.
 
Observe the constant $k_s$ in this equation. Usually, changing scales, the constant $k_s$ is assumed to be equal to $1$. We decide not to do so in order to better understand how this constant appears implicitly in the left-hand side of the equation above.

\noindent\textbf{A Pohozaev-type identity for problem \eqref{original}.} Our next step was to ``translate"  the Pohozaev-type identity in $\mathbb{R}^{N+1}_+$ into the original setting in $\mathbb{R}^N$. The Fourier transform is our main technique, following Br\"andle, Colorado, de Pablo and S\'anchez \cite{Brandle}. This ``reinterpretation" is as consequence of the Fourier transform theory developed for the operator $(-\Delta+m^2)^s$ and the study of the Bessel function attached to the extension problem.


To interpret the last integral in \eqref{Pohoextintrod} as a integral in $\mathbb{R}^N$, we observe that a solution of problem \eqref{P} satisfies
\begin{equation}\label{initalproblem}\left\{\begin{array}{ll}
\displaystyle\Delta_x w+\frac{1-2s}{y}w_y+w_{yy}-m^2w=0 &\text{in }\ \mathbb{R}^{N+1}_+\vspace{.1cm} \\
w(x,0)=u(x), &x\in \mathbb{R}^{N},\end{array}\right.
\end{equation}
since problem \eqref{P} is a extension of problem \eqref{original}. In particular, $F(w(x,0))=F(u)$ and the right-hand side of \eqref{Pohoextintrod} causes no problem.

To proceed with the translation, we take the Fourier transform (in the variable $x\in\mathbb{R}^N$) of problem \eqref{initalproblem}. Its solution is given in terms of the Bessel function $\Phi_s$ by
\begin{equation}\label{solFourier}\hat{w}(\xi,y)=\hat{u}(\xi)\Phi_s\left(\sqrt{m^2+4\pi^2|\xi|^2}\,y\right),\end{equation}
where $\Phi_s$ solves (see \cite{Ambrosio,Brandle,BMP,Capella,VFelli})
\begin{equation}\label{Phi}-\Phi+\frac{1-2s}{y}\Phi'+\Phi''=0,\qquad \Phi(0)=1,\quad \lim_{y\to\infty}\Phi(y)=0.
\end{equation}

The asymptotic behavior satisfied by $\Phi_s$ is well-known:
\begin{equation}\label{asymp}\Phi_{s}(s)\sim \left\{\begin{array}{ll}
1-c_1y^{2s}, &\text{when }\ y\to 0,\\
c_2y^{(2s-1)/2}e^{-y}, &\text{when }\ y\to \infty,\end{array}\right.\end{equation}
where 
\begin{equation}\label{constants}c_1(s)=2^{1-2s}\frac{\Gamma(1-s)}{2s\Gamma(s)}\quad\text{and}\quad c_2(s)=\frac{2^{(1-s)/2}\pi^{1/2}}{\Gamma(s/2)},\end{equation}
(see \cite{Brandle,Capella}) and $\Phi_s$ is a minimum of the functional
\begin{equation}\label{KPhi}\mathcal{K}(\Phi)=\int_0^\infty\left(|\Phi(y)|^2+|\Phi'(y)|^2\right)y^{1-2s}\dd y.\end{equation}

In \cite{Brandle} is stated that $\mathcal{K}(\Phi_s)=k_s$ and that this value can be obtained applying integration by parts. We were not able to do so. The method we could find to prove that $\mathcal{K}(\Phi_s)=k_s$ was tricky. 

Observe that, in the case $s=1/2$, the Bessel function $\Phi_{1/2}(t)=e^{-t}$ is easy to manipulate. This fact motivates our exposition in the general setting $(-\Delta+m^2)^su$.

To translate the left-hand side of \eqref{Pohoextintrod} into $\mathbb{R}^N$, we first note that it can be written as
\[\frac{N-2s}{2}\iint_{ \mathbb{R}^{N+1}_+}y^{1-2s}\left[|\nabla w|^2+m^2w^2\right]\dd x\dd y+m^2\iint_{\mathbb{R}^{N+1}_+}y^{1-2s}w^2\dd x\dd y.\]

We show that 
\[\iint_{ \mathbb{R}^{N+1}_+}y^{1-2s}\left[|\nabla w|^2+m^2w^2\right]\dd x\dd y=k_s\int_{ \mathbb{R}^N}\left|(m^2-\Delta)^{s/2}u(x)\right|^2\dd x,\]
thus obtaining 
\begin{multline*}\frac{N-2s}{2}k_s\int_{ \mathbb{R}^N}\left|(m^2-\Delta)^{s/2}u(x)\right|^2\dd x+m^2\iint_{\mathbb{R}^{N+1}_+}y^{1-2s}w^2\dd x\dd y\hfill\end{multline*}
\vspace*{-.2cm}\begin{align*}
&=Nk_s\int_{ \mathbb{R}^N}F(u)\dd x
\end{align*}
and the constant $k_s$ already appears in the first integral of the left-hand side of the above equation. 

By applying Plancherel's identity and changing variables, we obtain
\begin{align*}m^2\iint_{\mathbb{R}^{N+1}_+}y^{1-2s}w^2\dd x\dd y&=m^2\int_{ \mathbb{R}^N}\frac{|\hat{u}(\xi)|^2\dd\xi}{\left(m^2+4\pi^2|\xi|^2\right)^{1-s}}\int_0^\infty [\Phi_s(t)]^2t^{1-2s}\dd t.
\end{align*}

Our last obstacle in the ``translation" was the evaluation of the integral en $\Phi_s$: 
\[\int_0^\infty [\Phi_s(t)]^2t^{1-2s}\dd t=sk_s.\]

This evaluation was obtained as a consequence of $\mathcal{K}(\Phi_s)=k_s$. As expected, the Pohozaev-type identity does not depend on $k_s$:
\begin{theorem}\label{1}A solution $u\in H^s(\mathbb{R}^N)$ of problem \eqref{original} satisfies
	\begin{multline*}\frac{N-2s}{2}\int_{ \mathbb{R}^N}\left|(m^2-\Delta)^{s/2}u(x)\right|^2\dd x+sm^2\int_{ \mathbb{R}^N}\frac{|\hat{u}(\xi)|^2\dd\xi}{\left(m^2+4\pi^2|\xi|^2\right)^{1-s}}\hfill\end{multline*}
	\vspace*{-.2cm}\begin{align*}
	&=N\int_{ \mathbb{R}^N}F(u)\dd x.
	\end{align*}
\end{theorem}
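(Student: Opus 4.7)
The plan is to treat the already-established extension-side Pohozaev identity \eqref{Pohoextintrod} as a black box and transfer it to $\mathbb{R}^N$ via Plancherel together with the explicit Fourier representation \eqref{solFourier}. First I would split the coefficient on the second term of \eqref{Pohoextintrod} as $m^2\frac{N+2-2s}{2}=m^2\frac{N-2s}{2}+m^2$ so that the left-hand side becomes
\[\frac{N-2s}{2}\iint_{\mathbb{R}^{N+1}_+}y^{1-2s}\bigl[|\nabla w|^2+m^2w^2\bigr]\dd x\dd y\;+\;m^2\iint_{\mathbb{R}^{N+1}_+}y^{1-2s}w^2\dd x\dd y,\]
and then handle the two pieces separately.

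\textbf{Fourier reduction.} For the first piece, Plancherel in $x$ combined with \eqref{solFourier} collapses the integrand to
\[|\hat u(\xi)|^2\bigl(m^2+4\pi^2|\xi|^2\bigr)\bigl\{\Phi_s(t)^2+\Phi_s'(t)^2\bigr\},\qquad t:=\sqrt{m^2+4\pi^2|\xi|^2}\,y,\]
the prefactor $(m^2+4\pi^2|\xi|^2)$ assembling the contributions $4\pi^2|\xi|^2\Phi_s^2+m^2\Phi_s^2$ (from $|\nabla_x w|^2+m^2w^2$) together with the chain-rule factor appearing in $w_y^2$. The substitution $y\mapsto t$ gives $y^{1-2s}\dd y=(m^2+4\pi^2|\xi|^2)^{s-1}t^{1-2s}\dd t$, so the overall power becomes $(m^2+4\pi^2|\xi|^2)^s$ and the remaining $y$-integral is exactly $\mathcal{K}(\Phi_s)$ from \eqref{KPhi}. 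Using $\mathcal{K}(\Phi_s)=k_s$, the first piece equals $k_s\int_{\mathbb{R}^N}|(m^2-\Delta)^{s/2}u|^2\dd x$. The same Plancherel/rescaling applied to the second piece produces $m^2\int|\hat u|^2(m^2+4\pi^2|\xi|^2)^{s-1}\dd\xi\cdot\int_0^\infty[\Phi_s(t)]^2 t^{1-2s}\dd t$, and the evaluation $\int_0^\infty[\Phi_s(t)]^2 t^{1-2s}\dd t=sk_s$ delivers the second integral of the theorem. Summing the two pieces and dividing through by $k_s$ produces the stated identity.

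\textbf{Main obstacle.} The only non-routine ingredients are the two Bessel-function identities $\mathcal{K}(\Phi_s)=k_s$ and $\int_0^\infty[\Phi_s(t)]^2 t^{1-2s}\dd t=sk_s$. The excerpt explicitly flags the first as tricky; my route would be to multiply \eqref{Phi} by $\Phi_s y^{1-2s}$, integrate on $(0,\infty)$, and use the near-origin expansion in \eqref{asymp} to evaluate the only surviving boundary term $[\Phi_s\Phi_s' y^{1-2s}]_0^\infty=2sc_1=k_s$, which is precisely $\mathcal{K}(\Phi_s)$. Once this is in hand, testing \eqref{Phi} instead against the multiplier $y^{2-2s}\Phi_s'$ and integrating by parts (the boundary terms vanishing again thanks to \eqref{asymp}) yields $(1-s)\int_0^\infty\Phi_s^2 y^{1-2s}\dd y=s\int_0^\infty(\Phi_s')^2 y^{1-2s}\dd y$; solving this linear system together with $\mathcal{K}(\Phi_s)=k_s$ gives $\int_0^\infty\Phi_s^2 y^{1-2s}\dd y=sk_s$, completing the proof.
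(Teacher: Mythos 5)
Your proposal is correct and follows essentially the same route as the paper: the same splitting of the coefficient $m^2\frac{N+2-2s}{2}$, the same Plancherel/rescaling reduction via \eqref{solFourier}, and the same two Bessel identities, with $\mathcal{K}(\Phi_s)=k_s$ obtained exactly as in the paper's Lemma \ref{l1} (the boundary term $2sc_1=k_s$) and $\int_0^\infty\Phi_s^2t^{1-2s}\,\dd t=sk_s$ obtained from the relation $(1-s)\int\Phi_s^2t^{1-2s}\dd t=s\int(\Phi_s')^2t^{1-2s}\dd t$, which is the same integration by parts underlying the paper's Lemma \ref{l2}.
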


\noindent\textbf{A non-existence result.} Once the Pohozaev-type identity in $\mathbb{R}^N$ was obtained, we prove a result of non-existence of non-trivial solutions. Observe that our result is valid not only for positive solutions. 
\begin{theorem}\label{tnonexistence}The problem
	\[(-\Delta+m^2)^s u=|u|^{p-2}u\quad\text{in }\ \mathbb{R}^N\]
	has no non-trivial solution if $p\geq 2^{*}_s$, where
	\[2^{*}_s= \frac{2N}{N-2s}.\]
\end{theorem}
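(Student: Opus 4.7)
The plan is to combine two identities: the Pohozaev-type identity of Theorem \ref{1} and the ``Nehari-type'' identity obtained by testing the weak formulation of the equation against $u$ itself. Since $F(u)=|u|^p/p$ when $f(u)=|u|^{p-2}u$, Theorem \ref{1} yields
\[
\frac{N-2s}{2}\|u\|^2 + sm^2\int_{\mathbb{R}^N}\frac{|\hat{u}(\xi)|^2}{(m^2+4\pi^2|\xi|^2)^{1-s}}\,\dd\xi = \frac{N}{p}\int_{\mathbb{R}^N}|u|^p\,\dd x.
\]
Next I use Definition \ref{defsolution} with $v=u$, which is admissible provided $u\in H^s(\mathbb{R}^N)\cap L^p(\mathbb{R}^N)$ (for $p\le 2^*_s$ the Sobolev embedding handles this automatically; for $p>2^*_s$ one must note that the weak formulation requires $|u|^{p-1}u\in (H^s)^*$, so in particular $u\in L^p$ is part of the hypothesis of being a solution). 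This gives the Nehari-type identity
\[
\|u\|^2=\int_{\mathbb{R}^N}|u|^p\,\dd x.
\]

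Substituting this into the Pohozaev identity eliminates the $L^p$ term and produces
\[
\left(\frac{N-2s}{2}-\frac{N}{p}\right)\|u\|^2 + sm^2\int_{\mathbb{R}^N}\frac{|\hat{u}(\xi)|^2}{(m^2+4\pi^2|\xi|^2)^{1-s}}\,\dd\xi = 0.
\]
The hypothesis $p\ge 2^*_s=2N/(N-2s)$ is exactly $\frac{N-2s}{2}\ge\frac{N}{p}$, so the first coefficient is non-negative and both terms on the left-hand side are non-negative. Hence each must vanish separately: in particular, because $s>0$ and $m>0$, the weighted $L^2$-integral of $\hat u$ forces $\hat u\equiv 0$ a.e., and therefore $u\equiv 0$.

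The main subtlety, and the feature that makes the argument cleaner than in the massless $(-\Delta)^s$ case, is the role of the mass term. For the pure fractional Laplacian the critical exponent $p=2^*_s$ admits nontrivial (Aubin–Talenti) solutions, because the analogue of the second integral is absent. Here the strictly positive factor $sm^2$ in front of $\int|\hat u(\xi)|^2(m^2+4\pi^2|\xi|^2)^{s-1}\,\dd\xi$ kills the critical case as well, so the non-existence holds for the entire range $p\ge 2^*_s$. The only thing one has to be careful about is justifying the use of $u$ as a test function when $p>2^*_s$; this is built into the weak-solution framework, since otherwise the right-hand side of \eqref{innerproduct} would not even be finite.
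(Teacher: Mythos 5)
Your proposal is correct and follows essentially the same route as the paper: combine the Pohozaev identity of Theorem \ref{1} (with $F(u)=|u|^p/p$) with the Nehari identity $\|u\|^2=\int_{\mathbb{R}^N}|u|^p\,\dd x$ obtained by taking $v=u$ in the weak formulation, and observe that the strictly positive mass term $sm^2\int|\hat u(\xi)|^2(m^2+4\pi^2|\xi|^2)^{s-1}\dd\xi$ is incompatible with $p\geq 2^*_s$ unless $u\equiv 0$. The paper phrases this as a contradiction (a nontrivial solution forces $N/p-(N-2s)/2>0$, i.e.\ $p<2^*_s$), while you argue the contrapositive directly; the two are equivalent.
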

It follows from Theorem \ref{tnonexistence} that 
the constant
\[0<\Lambda=\inf_{u\in H^{s}(\mathbb{R}^N)\setminus\{0\}}\frac{\displaystyle\int_{ \mathbb{R}^N}(m^2+4\pi^2|\xi|^2)^{s}|\hat{u}(\xi)|^2\dd\xi}{\displaystyle\left(\int_{ \mathbb{R}^N}|u|^{2^{*}_s}\dd x\right)^{2/2^{*}_s}}<\infty\]
is not attained. 

However, Cotsiolis and Tavoularis \cite{Cotsiolis} proved that the Sobolev constant  
\[S=\inf_{u\in H^{s}(\mathbb{R}^N)\setminus\{0\}}\frac{\displaystyle\int_{ \mathbb{R}^N}(2\pi|\xi|)^{2s}\,|\hat{u}(\xi)|^2\dd\xi}{\displaystyle\left(\int_{ \mathbb{R}^N}|u|^{2^{*}_s}\dd x\right)^{2/2^{*}_s}}\]
is attained by the function $U(x)=c(\mu^2+(x-x_0)^2)^{-(N-2s)/2}$, where $c$ and $\mu$ are contants, with $c$ chosen so that  $|U|_{2^{*}_s}=1$. The function $U$ is in $ H^{s}(\mathbb{R}^N)$, if $N > 4s$. We prove that $\Lambda=S$.

\noindent\textbf{Solution to a asymptotic linear problem.} In the sequel, we handle the problem
\begin{equation}\label{problem}(-\Delta +m^2)^s u=f(u)\quad\text{in }\ \mathbb{R}^N,\end{equation}
with the non-linearity $f$ having as a model (see specific hypotheses in Section \ref{aplication})
\[f(t)=c\frac{t^3}{1+t^2},\]
where $c$ is a constant greater than $m^{2s}$. 

Considering the behavior of the non-linearity $f$ at infinity, it is natural to consider Cerami sequences and to apply the Ghoussoub-Preiss theorem. As a closed manifold, we consider a Nehari-Pohozaev manifold, as introduced by Ruiz \cite{Ruiz}, making use of the Pohozaev-type identity obtained for the space $\mathbb{R}^N$.
\begin{theorem}\label{texistence}The problem
\begin{equation}
(-\Delta +m^2)^s\, u=f(u)\quad\text{in }\ \mathbb{R}^N,
\end{equation}
when $f$ satisfies
\begin{enumerate}
\item [$(f_1)$] $f\colon\mathbb{R}\to \mathbb{R}$ is a $C^1$ function such that $f(t)/t$ is increasing if $t>0$ and decreasing if $t<0$;
\item [$(f_2)$] $\displaystyle\lim_{t\to 0} \frac{f(t)}{t}=0\quad\text{and}\quad \displaystyle\lim_{t\to \infty} \frac{f(t)}{t}=k\in (m^{2s},\infty]$;
\item [$(f_3)$] $\displaystyle\lim_{|t|\to\infty}tf(t)-2F(t)=\infty$, where $F(t)=\int_0^t f(\tau)\dd \tau$,
\end{enumerate}
has a ground state solution $w\in H^{s}(\mathbb{R}^N)$.
\end{theorem}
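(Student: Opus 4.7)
The plan is to work with the energy functional
\[
I(u)=\frac{1}{2}\|u\|^2-\int_{\mathbb{R}^N}F(u)\,\dd x,\qquad u\in H^s(\mathbb{R}^N),
\]
with $\|\cdot\|$ the norm introduced after Definition \ref{defsolution}. Hypotheses $(f_1)$--$(f_2)$ yield $F(t)/t^2\to 0$ as $t\to 0$ and $F(t)/t^2\to k/2$ as $|t|\to\infty$; combined with $\|u\|^2\ge m^{2s}|u|_2^2$ and $k>m^{2s}$, this produces the usual mountain-pass geometry for $I$. Ambrosetti--Rabinowitz fails (the nonlinearity is only asymptotically linear), so the standard Nehari argument is not available; instead I will adapt the Nehari--Pohozaev manifold of Ruiz, using Theorem~\ref{1} to replace the missing Pohozaev identity.

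First I would introduce the functional
\[
J(u)=\frac{N-2s}{2}\,\|u\|^2+s\,m^2\int_{\mathbb{R}^N}\frac{|\hat u(\xi)|^2}{(m^2+4\pi^2|\xi|^2)^{1-s}}\,\dd\xi-N\int_{\mathbb{R}^N}F(u)\,\dd x,
\]
so that Theorem~\ref{1} reads $J(u)=0$ on every solution, and define the Nehari--Pohozaev manifold
\[
\mathcal{M}=\bigl\{u\in H^s(\mathbb{R}^N)\setminus\{0\}\colon J(u)=0\bigr\}.
\]
Following Ruiz, for each $u\ne 0$ I consider the path $u_t(x)=u(x/t)$, $t>0$, which behaves well with respect to the $L^2$-type Bessel weight attached to $(-\Delta+m^2)^s$ after a change of variables in the Fourier side, and show using $(f_1)$ that the fibering map $t\mapsto I(u_t)$ has a unique critical point $t_u>0$, which is a strict maximum, and that $u_{t_u}\in\mathcal{M}$. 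This yields two facts: $\mathcal{M}$ is a natural $C^1$-constraint (every critical point of $I\vert_{\mathcal{M}}$ is a free critical point of $I$), and the mountain-pass level of $I$ equals $c:=\inf_{\mathcal{M}}I>0$.

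Next I would invoke the Ghoussoub--Preiss theorem to extract a Cerami sequence $\{u_n\}\subset H^s(\mathbb{R}^N)$ at level $c$, i.e.\ $I(u_n)\to c$ and $(1+\|u_n\|)\|I'(u_n)\|\to 0$. Boundedness is obtained from $(f_3)$: combining the Cerami conditions with the inequality $J(u_n)\le o(1)\|u_n\|$ (valid on any approximate critical sequence via the scaling above) and using $(f_3)$ through the identity
\[
I(u_n)-\tfrac12\langle I'(u_n),u_n\rangle=\int_{\mathbb{R}^N}\bigl[\tfrac12 u_nf(u_n)-F(u_n)\bigr]\dd x,
\]
an argument \emph{à la} Jeanjean rules out $\|u_n\|\to\infty$.

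The hard part is compactness, since $\mathbb{R}^N$ is translation-invariant and the nonlinearity is only asymptotically linear. I would apply a Lions-type concentration lemma to $\{u_n\}$: vanishing is excluded because $c>0$ together with $k>m^{2s}$ forces a uniform lower bound on some $L^2$-mass of $u_n$ on balls; hence, up to translations $y_n\in\mathbb{R}^N$, the sequence $\tilde u_n=u_n(\cdot+y_n)$ converges weakly to some $w\ne 0$ in $H^s(\mathbb{R}^N)$. Using the translation invariance of $(-\Delta+m^2)^s$ and Definition~\ref{defsolution}, I pass to the limit in $I'(\tilde u_n)\to 0$ to conclude $I'(w)=0$, so $w\in\mathcal{M}$ and $I(w)\ge c$. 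The opposite inequality $I(w)\le c$ follows from Fatou's lemma applied to $\int[\tfrac12 u_nf(u_n)-F(u_n)]\dd x$ (positive and convex in $u$ by $(f_1)$), giving $I(w)=c$ and thus a ground state. The principal obstacle throughout is controlling the scaling $u_t$ in presence of the mass term $m^2$, since $(-\Delta+m^2)^s$ is not homogeneous; the identity $\mathcal{K}(\Phi_s)=k_s$ and the auxiliary Bessel-type integral computed before Theorem~\ref{1} are what make the fibering map analysis, and hence the whole Nehari--Pohozaev scheme, feasible here.
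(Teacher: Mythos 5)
Your overall architecture (Ruiz-type manifold, Ghoussoub--Preiss Cerami sequence, Lions dichotomy, Fatou to close the gap $\Phi(w)\le c$) is the one the paper follows, but the specific manifold and scaling you chose break the key step, and this is a genuine gap rather than a stylistic difference.

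You take $J=P$, the pure Pohozaev functional, together with the pure dilation $u_t(x)=u(x/t)$. It is true that $\frac{\dd}{\dd t}I(u(\cdot/t))\big|_{t=1}=P(u)$, so your fibering map is at least consistent with your manifold. But for an asymptotically linear $f$ the map $t\mapsto I(u(\cdot/t))$ need not attain an interior maximum: one computes
\[
I(u(\cdot/t))=t^{N}\left[\frac{1}{2}\int_{\mathbb{R}^N}\Bigl(m^2+\frac{4\pi^2|\xi|^2}{t^2}\Bigr)^{s}|\hat u(\xi)|^2\,\dd\xi-\int_{\mathbb{R}^N}F(u)\,\dd x\right],
\]
and as $t\to\infty$ the bracket tends to $\frac{m^{2s}}{2}|u|_2^2-\int F(u)\,\dd x$, whose sign is indeterminate; by $(f_2)$ it is \emph{positive} whenever $u$ is a small multiple of a fixed function, so for such $u$ the fibering map increases to $+\infty$ and there is no projection onto $\{P=0\}$. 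Consequently the claimed uniqueness of $t_u$, the identity $c=\inf_{\mathcal M}I$, and the "natural constraint" property all fail to follow. The paper avoids this by using the combined functional $J(u)=\Phi'(u)\cdot u+2P(u)$ together with the Ruiz scaling $u_t(x)=t\,u(x/t^2)$: then $h_u(t)=\Phi(u_t)=t^{2N+2}[\cdots]$ with the bracket converging to $\frac{m^{2s}-k}{2}|u|_2^2<0$, so the hypothesis $k>m^{2s}$ enters exactly at this point to force $h_u(t)\to-\infty$ and guarantee a unique maximum for \emph{every} $u\neq 0$. Two smaller remarks: the paper never proves (or needs) that critical points of $I|_{\mathcal M}$ are free critical points — Ghoussoub--Preiss already furnishes an unconstrained Cerami sequence, and membership of the limit $w$ in $\mathcal M$ is deduced a posteriori from the Pohozaev identity of Theorem~\ref{1}; and your boundedness step quietly assumes $P(u_n)\to 0$ along a Cerami sequence, which is not automatic (the Pohozaev identity holds for solutions, not for approximate critical points) — the paper instead bounds $\|u_n\|$ by testing with $t_n=2\sqrt{\theta}/\|u_n\|$, applying Lions' lemma to $u_n/\|u_n\|$, and invoking $(f_3)$ with Fatou.
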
 

\noindent\textbf{A symmetry result.} In Section \ref{Symmetry} we present some simple facts about a modified Bessel kernel. Then, applying the moving planes method in integral form as introduced by W. Chen, C. Li and B. Ou \cite{ChenLiOu}, we prove the following result:
\begin{theorem}\label{tsymmetry} Let $f\colon [0,\infty) \rightarrow \mathbb{R}$ be a continuous function that satisfies
\begin{enumerate}
\item[$(s_1)$] $f'(t) \geq 0 $ and $f''(t) \geq 0$ for all $t \in [0,\infty)$.
\item[$(s_2)$] For any $\beta\in (1, 2^{*}_{s} - 1)$, there exists $q \in [2,2^{*}_{s}]$ with $q > \max\{\beta,\frac{N(\beta-1)}{2s}\}$ such that  $f'(w) \in L^{q/(\beta-1)}(\mathbb{R}^{N}), \ \ \forall w \in H^{s}(\mathbb{R}^{N})$.
\end{enumerate}
For any $0<s<1$, $N>2s$ and $m\in \mathbb{R}\setminus\{0\}$, if $u(x)$ is a positive solution of
\begin{align*}
(-\Delta + m^2)^{s} u &= f(u) \ \ \mbox{in} \ \ \ \mathbb{R}^{N},
\end{align*}
then $u$ is radially symmetric and decreasing with respect to the origin.
\end{theorem}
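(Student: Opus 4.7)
The plan is to implement the integral-form moving planes method of Chen–Li–Ou \cite{ChenLiOu} adapted to the Bessel kernel $G_{2s}$ of $(-\Delta+m^2)^s$. The first step is to rewrite the PDE as the integral equation
\[ u(x) = \int_{\mathbb{R}^N} G_{2s}(x-y)\,f(u(y))\,\dd x, \]
which relies on the fact that $G_{2s}$ is positive, radial, strictly radially decreasing, and fast-decaying at infinity — the basic properties collected at the beginning of Section \ref{Symmetry}. For a fixed direction $e_1$ and parameter $\lambda\in\mathbb{R}$ set $\Sigma_\lambda=\{x_1<\lambda\}$, write $x^\lambda$ for the reflection across $\{x_1=\lambda\}$, put $u_\lambda(x)=u(x^\lambda)$, and introduce the possibly-empty set $\Sigma_\lambda^u=\{x\in\Sigma_\lambda:u(x)>u_\lambda(x)\}$. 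Splitting the integral into $\Sigma_\lambda\cup\Sigma_\lambda^c$ and changing variables $y\mapsto y^\lambda$ on the second piece, the radiality of $G_{2s}$ yields, for $x\in\Sigma_\lambda$,
\[ u(x)-u_\lambda(x) = \int_{\Sigma_\lambda}\bigl[G_{2s}(x-y)-G_{2s}(x^\lambda-y)\bigr]\bigl[f(u(y))-f(u_\lambda(y))\bigr]\,\dd y, \]
with a strictly positive kernel factor.

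Because $f''\geq 0$ (hypothesis $(s_1)$), on $\Sigma_\lambda^u$ one has $f(u)-f(u_\lambda)\leq f'(u)(u-u_\lambda)$, so
\[ 0<u(x)-u_\lambda(x)\leq \int_{\Sigma_\lambda^u} G_{2s}(x-y)\,f'(u(y))(u(y)-u_\lambda(y))\,\dd y,\qquad x\in\Sigma_\lambda^u. \]
Applying to the right-hand side a Bessel version of Hardy–Littlewood–Sobolev (convolution with $G_{2s}$ maps $L^p$ into $L^q$ whenever $\frac1p-\frac1q=\frac{2s}{N}$) together with H\"older's inequality in the split dictated by $(s_2)$, one obtains
\[ \|u-u_\lambda\|_{L^q(\Sigma_\lambda^u)} \leq C\,\|f'(u)\|_{L^{q/(\beta-1)}(\Sigma_\lambda^u)}\,\|u-u_\lambda\|_{L^q(\Sigma_\lambda^u)}. \]
Hypothesis $(s_2)$ guarantees $f'(u)\in L^{q/(\beta-1)}(\mathbb{R}^N)$, so absolute continuity of the integral pushes $\|f'(u)\|_{L^{q/(\beta-1)}(\Sigma_\lambda^u)}$ to zero as $\lambda\to-\infty$; for $\lambda$ sufficiently negative the prefactor falls below $1$, forcing $|\Sigma_\lambda^u|=0$ and hence $u\leq u_\lambda$ throughout $\Sigma_\lambda$. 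This starts the procedure.

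Set $\lambda_0=\sup\{\lambda:u\leq u_\mu\text{ in }\Sigma_\mu\text{ for every }\mu\leq\lambda\}$. The decay of $u$ at infinity inherited from the integral representation (and from $u\in H^s(\mathbb{R}^N)$) prevents $\lambda_0=+\infty$. If $u\not\equiv u_{\lambda_0}$ on $\Sigma_{\lambda_0}$, the strict positivity of $G_{2s}(x-y)-G_{2s}(x^{\lambda_0}-y)$ on $\Sigma_{\lambda_0}$ combined with the comparison identity forces $u<u_{\lambda_0}$ strictly on $\Sigma_{\lambda_0}$, and a narrow-region reprise of the $L^q$ estimate above allows the plane to be nudged past $\lambda_0$, contradicting its definition. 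Hence $u$ is symmetric across $\{x_1=\lambda_0\}$; repeating with $e_1$ replaced by an arbitrary direction yields radial symmetry about the common center of the resulting symmetry hyperplanes, and radial monotonicity is a direct byproduct of the plane-moving step. The main obstacle I foresee is the Step~4 bookkeeping: carefully matching the $L^p\!\to\!L^q$ Bessel estimate to the precise H\"older exponent $q/(\beta-1)$ provided by $(s_2)$ so that the multiplicative constant truly drops below $1$. The exponential decay of $G_{2s}$ makes the underlying mapping property milder than its Riesz counterpart used in \cite{ChenLiOu}, but the exponent chasing still has to be done honestly, and the strong-comparison step at $\lambda=\lambda_0$ must exploit the strict monotonicity of the modified Bessel function rather than a classical maximum principle.
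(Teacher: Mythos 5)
Your proposal follows essentially the same route as the paper's Section \ref{Symmetry}: the integral reformulation $u=g_s*f(u)$ via the modified Bessel kernel, the reflection identity of Lemma \ref{Lema1}, a convexity bound for $f(u)-f(u_\lambda)$ on the bad set, the Bessel-potential estimate of Theorem \ref{tMa} combined with H\"older in the exponent $q/(\beta-1)$ supplied by $(s_2)$, and the two-stage moving-plane argument (start near $-\infty$, continue to the critical plane, repeat in every direction). One remark in your favor: your version of the convexity step, $f(u)-f(u_\lambda)\le f'(u)(u-u_\lambda)$ on $\Sigma_\lambda^u$, keeps the $L^{q/(\beta-1)}$-norm of $f'(u)$ on the shrinking half-space $\Sigma_\lambda^u\subset\Sigma_\lambda$ and therefore genuinely tends to $0$ as $\lambda\to-\infty$, whereas the paper's choice of $f'(u_\lambda)$ transfers that norm to $\Sigma_\lambda^c$, which does not shrink; so your Step 1 is the correct implementation.
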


Although hypothesis ($s_2$) is not standard, in that section we show that it is satisfied by functions like: a) $f(t)=t^\alpha$, if $\alpha\in (1,2^{*}_s-1)$;
b) $f(t)=t^{\alpha} + t^{\gamma}$, if $\alpha,\gamma\in (1,2^{*}_s-1)$;
c) $f(t)=t\ln(1+t)$.

\section{A Fourier transform theory for $(-\Delta+m^2)^s$}\label{prelim}
In the case $0<s<1$, aiming to present a specific Fourier theory for the fractionary operator $(-\Delta+m^2)^s$, we follow the work of Stinga and Torrea \cite{StingaTorrea},  our approach being based on the action of the heat semigroup  $e^{tL}$ generated by the operator $L$ acting on $L^s h$, for $h\in Dom(L^s)$. Some results about the Fourier transform can be found in many texts about the subject, see, e.g, \cite{Garofalo}. 

For any $\lambda>0$ and $0<s<1$, making the change of variables $s=\lambda t$ and integrating by parts, 
we conclude that
\begin{align}\label{intGamma}\lambda^s=\frac{1}{\Gamma(-s)}\int_0^\infty \left(e^{-\lambda t}-1\right)\frac{\dd t}{t^{1+s}}. \end{align}

Therefore, for an operator $L$, we have an expression for $L^s  f(x)$, if $f\colon\mathbb{R}^N\to\mathbb{R}$:
\begin{align}\label{Lsigma}
L^s  f(x)=\frac{1}{\Gamma(-s )}\int_0^\infty\left(e^{-Lt}f(x)-f(x)\right)\frac{\dd t}{t^{1+s }},\quad\forall\, x\in\mathbb{R}^N,\ \ \forall\, s \in (0,1).
\end{align}
This formula makes use of the classical heat-diffusion semigroup generated by $L$, see \cite[Equation (1.11)]{StingaTorrea}. 

Let us consider the problem
\begin{equation}\label{parabolic}\left\{\begin{array}{rlll}
v_t(x,t)&=&(\Delta-m^2)v(x,t), &(x,t)\in \mathbb{R}^N\times(0,\infty)\\
v(x,0)&=&f(x), &x\in\mathbb{R}^N,
\end{array}\right.\end{equation}
which has the solution
\[v(x,t)=e^{(\Delta-m^2)t}f(x),\quad\forall\,x\in\mathbb{R}^N,\ \ \forall\,t\in(0,\infty).\]

Taking the Fourier transform in \eqref{parabolic} with respect to the variable $x$, we obtain
\begin{equation*}\left\{\begin{array}{rlll}
\displaystyle\frac{\partial}{\partial t}\hat{v}(\xi, t)&=&-(m^2+4\pi^2|\xi|^2)\hat{v}(\xi,t), &(\xi,t)\in \mathbb{R}^N\times(0,\infty)\vspace*{.1cm}\\
\hat{v}(\xi,0)&=&\hat{f}(\xi), &\xi\in\mathbb{R}^N,
\end{array}\right.\end{equation*}
its solution being given by
\begin{equation*}
\hat{v}(\xi,t)=e^{-(m^2+4\pi^2|\xi|^2)t}\hat{f}(\xi),\quad(\xi,t)\in\mathbb{R}^N\times (0,\infty).
\end{equation*}

We conclude that
\begin{align}\label{ParabolicSol}
e^{(\Delta-m^2)t}f(x)&=v(x,t)=\mathcal{F}^{-1}\left(\hat{v}(\xi,t)\right)(x)
=\int_{ \mathbb{R}^N}e^{2\pi i x\cdot\xi}\hat{v}(\xi,t)\dd\xi\nonumber\\
&=\int_{ \mathbb{R}^N}e^{2\pi i x\cdot\xi}e^{-(m^2+4\pi^2|\xi|^2)t}\hat{f}(\xi)\dd\xi.
\end{align}

On the other hand, substituting $L=(-\Delta+m^2)$ into \eqref{Lsigma}, we obtain
\[(-\Delta+m^2)^s  f(x)=\frac{1}{\Gamma(-s )}\int_0^\infty\left(e^{(\Delta-m^2)t}f(x)-f(x)\right)\frac{\dd t}{t^{1+s }}.\]

Since $\mathcal{F}^{-1}(\mathcal{F}f)=f$, it follows from \eqref{ParabolicSol} that
\begin{multline*}
(-\Delta+m^2)^s  f(x)\\
\end{multline*}\vspace*{-1cm}
\begin{align*}
&=\frac{1}{\Gamma(-s )}\int_0^\infty\left[\int_{ \mathbb{R}^N}e^{2\pi i x\cdot\xi}e^{-(m^2+4\pi^2|\xi|^2)t}\hat{f}(\xi)\dd\xi-\int_{ \mathbb{R}^N}e^{2\pi ix\cdot\xi}\hat{f}(\xi)\dd\xi\right]\frac{\dd t}{t^{1+s }}\\
&=\frac{1}{\Gamma(-s )}\int_{ \mathbb{R}^N}e^{2\pi i x\cdot\xi}\hat{f}(\xi)\left[\int_0^\infty \left(e^{-(m^2+4\pi^2|\xi|^2)t}-1\right)\frac{\dd t}{t^{1+s }}\right]\dd\xi.
\end{align*}

Since \eqref{intGamma} yields
\[\int_0^\infty \left(e^{-(m^2+4\pi^2|\xi|^2)t}-1\right)\frac{\dd t}{t^{1+s }}=\Gamma(-s )\,(m^2+4\pi^2|\xi|^2)^{s },\]
we obtain
\begin{align}\label{-Delta+m2sigma}
(-\Delta+m^2)^s  f(x)&=\int_{ \mathbb{R}^N}e^{2\pi i x\cdot\xi}\,(m^2+4\pi^2|\xi|^2)^{s }\hat{f}(\xi)\dd\xi\nonumber\\
&=\mathcal{F}^{-1}\left((m^2+4\pi^2|\cdot|^2)^s  \hat{f}\right)(x).
\end{align}

Therefore, we conclude that
\begin{align}\label{-Delta+m2sigmaF}\mathcal{F}\left[(-\Delta+m^2)^s  f\right](\xi)=(m^2+4\pi^2|\xi|^2)^s \mathcal{F}(f(\xi)).\end{align}

In the case $s=1/2$, formula \eqref{-Delta+m2sigmaF} can be found in \cite{LiebLoss}. In the general case, it is no surprise, see Stein \cite{Stein} or Garofalo \cite{Garofalo}. The same happens with the next result:
\begin{lemma}\label{lemma1f}For any $f\in\mathcal{S}(\mathbb{R}^N)$ and $s \in (0,1)$ we have
\[(-\Delta+m^2)^s  f(\xi)=\mathcal{F}\left[(m^2+4\pi^2|\xi|^2)^s \mathcal{F}^{-1}(f)\right](\xi).\]
\end{lemma}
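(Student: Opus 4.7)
The plan is to parallel the derivation of equation \eqref{-Delta+m2sigma}, but to invoke $\mathcal{F}\mathcal{F}^{-1}=\mathrm{id}$ in place of $\mathcal{F}^{-1}\mathcal{F}=\mathrm{id}$ at the relevant step. Concretely, I start from the Bochner subordination identity
\[(-\Delta+m^2)^s f(x)=\frac{1}{\Gamma(-s)}\int_0^\infty\left(e^{(\Delta-m^2)t}f(x)-f(x)\right)\frac{\dd t}{t^{1+s}}\]
that follows from \eqref{Lsigma}, and I re-derive the heat-semigroup representation by applying $\mathcal{F}^{-1}$ (rather than $\mathcal{F}$) in the spatial variable of the parabolic problem \eqref{parabolic}. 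The ODE for $\mathcal{F}^{-1}(v)(\xi,t)$ has the same structure as in \eqref{ParabolicSol}, hence $\mathcal{F}^{-1}(v)(\xi,t)=e^{-(m^2+4\pi^2|\xi|^2)t}\mathcal{F}^{-1}(f)(\xi)$, and applying $\mathcal{F}$ yields
\[e^{(\Delta-m^2)t}f(x)=\int_{\mathbb{R}^N}e^{-2\pi i x\cdot\xi}e^{-(m^2+4\pi^2|\xi|^2)t}\mathcal{F}^{-1}(f)(\xi)\,\dd\xi.\]

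Substituting this together with the trivial identity $f(x)=\int_{\mathbb{R}^N}e^{-2\pi i x\cdot\xi}\mathcal{F}^{-1}(f)(\xi)\,\dd\xi$ into the Bochner formula and interchanging the order of integration gives
\[(-\Delta+m^2)^s f(x)=\frac{1}{\Gamma(-s)}\int_{\mathbb{R}^N}e^{-2\pi ix\cdot\xi}\mathcal{F}^{-1}(f)(\xi)\left[\int_0^\infty\left(e^{-(m^2+4\pi^2|\xi|^2)t}-1\right)\frac{\dd t}{t^{1+s}}\right]\dd\xi.\]
The inner bracket collapses to $\Gamma(-s)\,(m^2+4\pi^2|\xi|^2)^s$ by \eqref{intGamma} with $\lambda=m^2+4\pi^2|\xi|^2$, and the outer integral is by definition $\mathcal{F}\bigl[(m^2+4\pi^2|\xi|^2)^s\,\mathcal{F}^{-1}(f)\bigr](x)$, which is the claim.

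The only delicate point is the Fubini interchange, which I view as routine: since $f\in\mathcal{S}(\mathbb{R}^N)$ the function $\mathcal{F}^{-1}(f)$ also lies in $\mathcal{S}(\mathbb{R}^N)$, and the $t$-integral is dominated by $|e^{-\lambda t}-1|t^{-1-s}$, which is integrable with an $O((m^2+|\xi|^2)^s)$ bound that is easily absorbed by the Schwartz decay of $\mathcal{F}^{-1}(f)$. I expect no substantive obstacle beyond this standard dominated-convergence step. An equally quick alternative is to deduce the statement directly from \eqref{-Delta+m2sigma}: the symbol $(m^2+4\pi^2|\xi|^2)^s$ is even in $\xi$ and $\hat{f}(\xi)=\mathcal{F}^{-1}(f)(-\xi)$, so the change of variables $\xi\mapsto -\xi$ in \eqref{-Delta+m2sigma} converts the $\mathcal{F}^{-1}$ on the right-hand side into $\mathcal{F}$ and yields the lemma at once.
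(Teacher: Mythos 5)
Your proposal is correct, and in fact it contains two valid proofs. Your ``quick alternative'' at the end is essentially the paper's own argument: the paper starts from \eqref{-Delta+m2sigma}, writes $\hat f$ out as an integral, and performs the change of variables $x\mapsto -x$, which is exactly your observation that the symbol is even and $\hat f(\xi)=\mathcal{F}^{-1}(f)(-\xi)$. Your primary route is genuinely different in presentation: rather than quoting \eqref{-Delta+m2sigma}, you re-run the whole subordination/heat-semigroup computation with $\mathcal{F}^{-1}$ in place of $\mathcal{F}$, which works because the symbol of $\Delta-m^2$ is even, so $\mathcal{F}^{-1}(v)(\xi,t)$ satisfies the same ODE as $\hat v(\xi,t)$. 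This is sound but redundant given that \eqref{-Delta+m2sigma} is already established two lines above the lemma; it buys nothing over the one-line change of variables except making explicit where the evenness of the symbol enters. Your Fubini justification is adequate: the bound $\int_0^\infty|e^{-\lambda t}-1|\,t^{-1-s}\,\dd t\leq C_s\lambda^s$ with $\lambda=m^2+4\pi^2|\xi|^2$, combined with the Schwartz decay of $\mathcal{F}^{-1}(f)$, gives absolute integrability of the double integral (the paper itself does not comment on this interchange, so you are if anything more careful).
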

\begin{proof}Since $g=\mathcal{F}^{-1}(\mathcal{F}(g))$ for any $g\in\mathcal{S}(\mathbb{R}^N)$, it follows from \eqref{-Delta+m2sigma} that
\begin{align*}
(-\Delta+m^2)^s  f(\xi)
&=\int_{\mathbb{R}^N}e^{2\pi ix\cdot\xi}\,(m^2+4\pi^2|x|^2)^s \hat{f}(x)\dd x\\
&=\int_{\mathbb{R}^N}\int_{ \mathbb{R}^N}e^{2\pi ix\cdot\xi}\,(m^2+4\pi^2|x|^2)^s  e^{-2\pi ix\cdot z}f(z)\dd z\dd x\\
\end{align*}\vspace*{-.8cm}
	
The change of variables $x\to -x$ yields
\begin{align*}
(-\Delta+m^2)^s  f(\xi)&=\int_{ \mathbb{R}^N}e^{-2\pi ix\cdot\xi}\,(m^2+4\pi^2|x|^2)^s \left(\int_{ \mathbb{R}^N}e^{2\pi ix\cdot z}f(z)\dd z\right)\dd x\\
&=\mathcal{F}\left[(m^2+4\pi^2|\cdot|^2)^s \mathcal{F}^{-1}(f)\right](\xi)
\end{align*}
and the proof is complete.
$\hfill\Box$\end{proof}

We recall that, for any $f,g\in L^1(\mathbb{R}^N)$ it holds
\begin{align}\label{fhatg=hatfg}\int_{ \mathbb{R}^N}\hat{f}(x)g(x)\dd x=\int_{ \mathbb{R}^N}f(x)\hat{g}(x)\dd x.
\end{align}

We now prove the symmetry of the operator $(-\Delta+m^2)^s$:
\begin{lemma}\label{symmetry} For any $s\in(0,1)$ and $u,v\in\mathcal{S}(\mathbb{R}^N)$ we have
\[\int_{\mathbb{R}^N}\left[(-\Delta+m^2)^s  u(x)\right] v(x)\dd x=\int_{ \mathbb{R}^N}u(x)\left[(-\Delta+m^2)^s  v(x)\right]\dd x.\]
\end{lemma}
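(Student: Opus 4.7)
The plan is to exploit the two Fourier representations of $(-\Delta+m^2)^s$ already established in this section—namely formula \eqref{-Delta+m2sigma} and the variant given by Lemma \ref{lemma1f}—together with the duality identity \eqref{fhatg=hatfg}, in order to transfer the operator from $u$ onto $v$ symbolically.

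Concretely, I would start by invoking Lemma \ref{lemma1f} on the left-hand integral to rewrite
\[
\int_{\mathbb{R}^N}\left[(-\Delta+m^2)^s u(x)\right]v(x)\,\dd x=\int_{\mathbb{R}^N}\mathcal{F}\!\left[(m^2+4\pi^2|\cdot|^2)^s\mathcal{F}^{-1}(u)\right](x)\,v(x)\,\dd x.
\]
Then I apply \eqref{fhatg=hatfg} with $f=(m^2+4\pi^2|\cdot|^2)^s\mathcal{F}^{-1}(u)$ and $g=v$ to move the outer Fourier transform onto $v$, producing
\[
\int_{\mathbb{R}^N}(m^2+4\pi^2|x|^2)^s\,\mathcal{F}^{-1}(u)(x)\,\hat v(x)\,\dd x.
\]

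Next I want to shift the inverse transform from $u$ to the rest of the integrand. Since \eqref{fhatg=hatfg}, applied after replacing $f$ by $\mathcal{F}^{-1}(f)$, immediately yields the dual identity $\int \mathcal{F}^{-1}(f)\,g\,\dd x=\int f\,\mathcal{F}^{-1}(g)\,\dd x$, I would use it with $f=u$ and $g(x)=(m^2+4\pi^2|x|^2)^s\hat v(x)$ to obtain
\[
\int_{\mathbb{R}^N}u(x)\,\mathcal{F}^{-1}\!\left[(m^2+4\pi^2|\cdot|^2)^s\hat v\right](x)\,\dd x.
\]
By formula \eqref{-Delta+m2sigma}, the inner expression equals $(-\Delta+m^2)^s v(x)$, which completes the chain of equalities.

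Integrability is not an obstacle because $u,v\in\mathcal{S}(\mathbb{R}^N)$ and the Fourier transform preserves $\mathcal{S}(\mathbb{R}^N)$; moreover multiplication by the smooth slowly-growing symbol $(m^2+4\pi^2|\xi|^2)^s$ sends $\mathcal{S}(\mathbb{R}^N)$ into $L^1(\mathbb{R}^N)\cap L^\infty(\mathbb{R}^N)$ (indeed stays rapidly decreasing), so every integral above is absolutely convergent and every application of \eqref{fhatg=hatfg} is legitimate. The only minor point to verify is the dual Parseval identity for $\mathcal{F}^{-1}$, which follows from \eqref{fhatg=hatfg} by a one-line substitution, so no real difficulty is expected.
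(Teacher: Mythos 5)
Your proof is correct and uses essentially the same ingredients as the paper's: the duality identity \eqref{fhatg=hatfg}, Lemma \ref{lemma1f}, and the Fourier representation \eqref{-Delta+m2sigma}/\eqref{-Delta+m2sigmaF}. The only difference is cosmetic — you unfold the operator on $u$ and push the transforms toward $v$, whereas the paper writes $v=\mathcal{F}(\mathcal{F}^{-1}v)$ and pushes them toward $u$ — so the two arguments are mirror images of one another.
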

\begin{proof}We have 
\begin{align*}
\int_{\mathbb{R}^N}\left[(-\Delta +m^2)^s  u(x)\right]v(x)\dd x&=\int_{ \mathbb{R}^N}(-\Delta +m^2)^s  u(x)\mathcal{F}\left(\mathcal{F}^{-1}(v)\right)(x)\dd x\\
&=\int_{ \mathbb{R}^N}\mathcal{F}\left((-\Delta +m^2)^s  u\right)(\xi)\left(\mathcal{F}^{-1}(v)\right)(\xi)\dd\xi\\
&=\int_{ \mathbb{R}^N}(m^2+4\pi^2|\xi|^2)^{s }\mathcal{F}(u)\left(\mathcal{F}^{-1}(v)\right)(\xi)\dd\xi,
	\end{align*}
where the second equality follows from \eqref{fhatg=hatfg} and the third by \eqref{-Delta+m2sigmaF}. Since Lemma \ref{lemma1f} guarantees that $(m^2+4\pi^2|\xi|^2)^s \mathcal{F}^{-1}(v)(\xi)=\mathcal{F}^{-1}\left[(-\Delta+m^2)^s  v\right](\xi)$,
we obtain
\begin{align*}\int_{\mathbb{R}^N}\left[(-\Delta +m^2)^s  u(x)\right]v(x)\dd x&=\int_{\mathbb{R}^N}\mathcal{F}(u)(\xi)\mathcal{F}^{-1}\left[(-\Delta+m^2)^s  v\right](\xi)\\
&=\int_{ \mathbb{R}^N}u(x)\left[(-\Delta+m^2)^s  v(x)\right]\dd x,\end{align*}
the last equality being a consequence of \eqref{fhatg=hatfg}.
$\hfill\Box$\end{proof}
\begin{lemma}\label{parts}For any $s_1,s_2\in (0,1)$ such that $s_1+s_2<1$ it holds
\[(-\Delta+m^2)^{s_1}\cdot (-\Delta+m^2)^{s_2}=(-\Delta+m^2)^{s_1+s_2}.\]
\end{lemma}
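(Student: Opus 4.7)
The plan is to reduce the asserted operator identity to the trivial pointwise identity $a^{s_1}a^{s_2}=a^{s_1+s_2}$ (valid for $a>0$) on the Fourier side via \eqref{-Delta+m2sigmaF}. I will work first on the Schwartz space $\mathcal{S}(\mathbb{R}^N)$, where the composition is patently meaningful: because $m\neq 0$, the multiplier $\xi\mapsto (m^2+4\pi^2|\xi|^2)^s$ is smooth on $\mathbb{R}^N$ (bounded below by $m^{2s}>0$) and of polynomial growth $2s<2$, so multiplication by it preserves $\mathcal{S}(\mathbb{R}^N)$. Combined with $\mathcal{F}\colon \mathcal{S}\to\mathcal{S}$ and formula \eqref{-Delta+m2sigma}, this shows that $(-\Delta+m^2)^{s_i}$ maps $\mathcal{S}(\mathbb{R}^N)$ into itself for each $i=1,2$, so the composition $(-\Delta+m^2)^{s_1}\cdot(-\Delta+m^2)^{s_2}$ is well defined on $\mathcal{S}(\mathbb{R}^N)$.

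Next I fix $f\in\mathcal{S}(\mathbb{R}^N)$, set $g=(-\Delta+m^2)^{s_2}f$, and apply \eqref{-Delta+m2sigmaF} with $s=s_1$ to obtain
\[
\mathcal{F}\!\left[(-\Delta+m^2)^{s_1}g\right](\xi)=(m^2+4\pi^2|\xi|^2)^{s_1}\hat{g}(\xi).
\]
A second application of \eqref{-Delta+m2sigmaF}, this time with $s=s_2$ applied to $f$, gives $\hat{g}(\xi)=(m^2+4\pi^2|\xi|^2)^{s_2}\hat{f}(\xi)$. Multiplying these two relations together yields
\[
\mathcal{F}\!\left[(-\Delta+m^2)^{s_1}(-\Delta+m^2)^{s_2}f\right](\xi)=(m^2+4\pi^2|\xi|^2)^{s_1+s_2}\hat{f}(\xi).
\]
Because $s_1+s_2\in(0,1)$ by hypothesis, \eqref{-Delta+m2sigmaF} applies once more, now with $s=s_1+s_2$, identifying the right-hand side as $\mathcal{F}\!\left[(-\Delta+m^2)^{s_1+s_2}f\right](\xi)$. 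Injectivity of $\mathcal{F}$ on $\mathcal{S}(\mathbb{R}^N)$ then finishes the proof for Schwartz data.

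I do not expect any serious obstacle here: once \eqref{-Delta+m2sigmaF} is in hand, the lemma is essentially immediate. The only point to flag is the role of the hypothesis $s_1+s_2<1$, which is precisely what legitimizes the final invocation of \eqref{-Delta+m2sigmaF} on the composed operator, since the definition \eqref{Lsigma} adopted in the paper requires the exponent to lie in $(0,1)$. The extension of the identity from $\mathcal{S}(\mathbb{R}^N)$ to whichever larger space is natural (for instance $H^{2(s_1+s_2)}(\mathbb{R}^N)$, on which both sides define bounded Fourier multipliers into $L^2$) follows by density of $\mathcal{S}(\mathbb{R}^N)$ together with the continuity of the multipliers involved, so nothing beyond the Schwartz case is required conceptually.
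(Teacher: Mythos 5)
Your proof is correct and follows essentially the same route as the paper: both reduce the operator identity to the multiplicativity of the symbol $(m^2+4\pi^2|\xi|^2)^{s}$ via the Fourier characterization \eqref{-Delta+m2sigmaF} and conclude by inverting the transform on $\mathcal{S}(\mathbb{R}^N)$. Your additional remarks on why the composition is well defined on Schwartz space and on the role of the hypothesis $s_1+s_2<1$ are sound but not needed beyond what the paper records.
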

\begin{proof}For any $u\in \mathcal{S}(\mathbb{R}^N)$ we have
\begin{align*}
\mathcal{F}\left((-\Delta+m^2)^{s_1+s_2}u\right)(\xi)&=(m^2+4\pi^2|\xi|^2)^{s_1+s_2}\mathcal{F}(u)(\xi)\\
&=(m^2+4\pi^2|\xi|^2)^{s_1}(m^2+4\pi^2|\xi|^2)^{s_2}\mathcal{F}(u)(\xi)\\
&=(m^2+4\pi^2|\xi|^2)^{s_1}\mathcal{F}\left((-\Delta+m^2)^{s_2}u\right)(\xi)\\
&=\mathcal{F}\left((-\Delta+m^2)^{s_1}\left((-\Delta+m^2)^{s_2}u\right)\right)(\xi).
\end{align*}
Taking $\mathcal{F}^{-1}$, we conclude.
$\hfill\Box$\end{proof}

By applying Lemmas \ref{parts} and \ref{symmetry} we immediately obtain:
\begin{corollary}\label{corsolution}
For any $s\in (0,1)$ and $u,v\in\mathcal{S}(\mathbb{R}^N)$ it holds
\[\int_{\mathbb{R}^N}\left[(-\Delta+m^2)^su(x)\right]v(x)\dd x=\int_{\mathbb{R}^N}(-\Delta+m^2)^{s/2}u(x)(-\Delta+m^2)^{s/2}v(x)\dd x.\]
\end{corollary}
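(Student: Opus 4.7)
The strategy is to combine the two previous lemmas directly. Taking $s_1 = s_2 = s/2$ in Lemma \ref{parts} (which requires $s/2 + s/2 = s < 1$, exactly the hypothesis we have), we obtain the operator factorization
\[
(-\Delta+m^2)^{s} u = (-\Delta+m^2)^{s/2}\!\left[(-\Delta+m^2)^{s/2} u\right].
\]
Substituting this into the left-hand side of the corollary and then invoking Lemma \ref{symmetry} with the test functions $(-\Delta+m^2)^{s/2} u$ and $v$ should transfer one power of the operator from $u$ onto $v$, yielding the desired symmetric bilinear form.

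In more detail, the plan is: first, rewrite
\[
\int_{\mathbb{R}^N}\!\left[(-\Delta+m^2)^{s} u(x)\right]v(x)\dd x = \int_{\mathbb{R}^N}\!\left[(-\Delta+m^2)^{s/2}\!\bigl((-\Delta+m^2)^{s/2}u\bigr)(x)\right]v(x)\dd x
\]
using Lemma \ref{parts}. Then apply Lemma \ref{symmetry} with $s$ replaced by $s/2$ (still in $(0,1)$) to move one copy of $(-\Delta+m^2)^{s/2}$ off of its argument and onto $v$, producing exactly the right-hand side of the corollary.

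The only genuine subtlety is that Lemma \ref{symmetry} requires both arguments to belong to $\mathcal{S}(\mathbb{R}^N)$, so I need to verify that $(-\Delta+m^2)^{s/2} u \in \mathcal{S}(\mathbb{R}^N)$ whenever $u\in\mathcal{S}(\mathbb{R}^N)$. This is where the hypothesis $m\neq 0$ pays off: the Fourier multiplier $(m^2+4\pi^2|\xi|^2)^{s/2}$ is smooth on all of $\mathbb{R}^N$ (no singularity at the origin), bounded below by $m^s>0$, and all of its derivatives have polynomial growth at infinity. Consequently multiplication by this symbol preserves $\mathcal{S}(\mathbb{R}^N)$, and by \eqref{-Delta+m2sigma} the operator $(-\Delta+m^2)^{s/2}$ maps $\mathcal{S}(\mathbb{R}^N)$ into itself. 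This is the step I expect to require the most care; once it is in place, the two lemmas chain together mechanically and the corollary follows in one line.
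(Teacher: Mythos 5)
Your proposal is correct and follows exactly the paper's argument: the paper also obtains the corollary by combining Lemma \ref{parts} (with $s_1=s_2=s/2$) and Lemma \ref{symmetry}, stating that the result follows ``immediately.'' Your additional verification that $(-\Delta+m^2)^{s/2}$ preserves $\mathcal{S}(\mathbb{R}^N)$ (using that the symbol is smooth and of polynomial growth since $m\neq 0$) is a welcome extra care that the paper leaves implicit.
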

\section{A Pohozaev-type identity for the extension problem}\label{Prooft1}

We consider the extension problem in $ \mathbb{R}^{N+1}_+=\mathbb{R}^N\times (0,\infty)$
\begin{equation}\label{Extension}
\left\{\begin{array}{rcll}
-\diver(y^{1-2s}\nabla w)+m^2y^{1-2s}w&=&0, &(x,y)\in \mathbb{R}^{N+1}_+,\\
\displaystyle\lim_{y\to 0^+}\left(-y^{1-2s}\frac{\partial w}{\partial y}(x,y)\right)&=&{k_s}f(w(x,0)), &x\in \mathbb{R}^N.
\end{array}\right.
\end{equation}

Usually, changing scales, the constant $k_s$ is assumed to be equal to $1$. However, in order to better understand the behavior while obtaining the Pohozaev-type identity, we will not change scales. 

For all $R>0$ and $\delta\in (0,R)$, define
\begin{align*}
D^+_{R,\delta}&=\left\{z=(x,y)\in\mathbb{R}^N\times [\delta,\infty)\,:\,|z|^2\leq R^2\right\}\\
\partial D^1_{R,\delta}&=\left\{z=(x,y)\in\mathbb{R}^N\times\{y=\delta\}\,:\,|x|^2\leq R^2-\delta^2\right\}\\
\partial D^2_{R,\delta}&=\left\{z=(x,y)\in\mathbb{R}^N\times[\delta,\infty)\,:\,|z|^2= R^2\right\}
\end{align*}
and note that $\partial D^+_{R,\delta}=\partial D^1_{R,\delta}\cup \partial D^2_{R,\delta}$. 

Denoting $\eta$ the unit outward normal vector to $\partial D^+_{R,\delta}$, then
\[\eta=\left\{\begin{array}{ll}
(0,\ldots,0,-1), &\text{in }\ \partial D^1_{R,\delta}\vspace*{.1cm}\\
\displaystyle\frac{z}{R},&\text{in }\ \partial D^2_{R,\delta}\end{array}\right.\]

Since 
\begin{multline*}
\diver [y^{1-2s}\nabla w](z\cdot \nabla w)\hfill
\end{multline*}
\begin{align*}&=\diver [y^{1-2s}\nabla w\,(z\cdot \nabla w)]-\left[y^{1-2s}\nabla w\cdot\nabla (z\cdot \nabla w)\right]\\
&=\diver\!\left[y^{1-2s}\nabla w\,(z\cdot\nabla w)-y^{1-2s}z\frac{|\nabla w|^2}{2}\right]+\frac{N-2s}{2}y^{1-2s}|\nabla w|^2,
\end{align*}
multiplication of \eqref{Extension} by $w\cdot\nabla w$ and integration on $D^+_{R,\delta}$ give
\begin{align*}
0
&=-\iint_{D^+_{R,\delta}}\diver\left[y^{1-2s}\nabla w\,(z\cdot\nabla w)-y^{1-2s}z\frac{|\nabla w|^2}{2}\right]\dd x\dd y\\
&\qquad-\frac{N-2s}{2}\iint_{D^+_{R,\delta}}y^{1-2s}|\nabla w|^2\dd x\dd y+\iint_{D^+_{R,\delta}}m^2y^{1-2s}w\,(z\cdot\nabla w)\dd x\dd y
\end{align*}
so that the application of the divergence theorem yields
\begin{align*}
0&=-\int_{\partial D^+_{R,\delta}}\left[y^{1-2s}(\nabla w\cdot \eta)\,(z\cdot\nabla w)-y^{1-2s}(z\cdot \eta)\frac{|\nabla w|^2}{2}\right]\dd \sigma\\
&\qquad -\frac{N-2s}{2}\iint_{D^+_{R,\delta}}y^{1-2s}|\nabla w|^2\dd x\dd y+\iint_{D^+_{R,\delta}}m^2y^{1-2s}w\,(z\cdot\nabla w)\dd x\dd y\\
&=J_1(R,\delta)+J_2(R,\delta)+J_3(R,\delta)+J_4(R,\delta),
\end{align*}
where
\begin{align*}
J_1(R,\delta)&=-\int_{\partial D^1_{R,\delta}}y^{1-2s}\left[(-\partial_y w)\,(z\cdot\nabla w)+y\frac{|\nabla w|^2}{2}\right]\dd\sigma\\
J_2(R,\delta)&=-\int_{\partial D^2_{R,\delta}}y^{1-2s}\left[\frac{1}{R}(z\cdot\nabla w)^2-R\frac{|\nabla w|^2}{2}\right]\dd\sigma\\
J_3(R,\delta)&=-\frac{N-2s}{2}\iint_{D^+_{R,\delta}}y^{1-2s}|\nabla w|^2\dd x\dd y\\
J_4(R,\delta)&=\iint_{D^+_{R,\delta}}m^2y^{1-2s}w\,(z\cdot\nabla w)\dd x\dd y.
\end{align*}

Since $u(x)=w(x,0)$, then $f(u)(x\cdot \nabla u)=\diver [F(u)x]-NF(u)$.

Denote $B_R=\{(x,0)\in\mathbb{R}^{N+1}_+\,:\,|x|^2\leq R^2\}$. Then, \eqref{Extension} and the divergence theorem imply that
\begin{multline*}
\lim_{\delta\to 0}\int_{\partial D^1_{R,\delta}}y^{1-2s}(-\partial_y w)\,(z\cdot\nabla w)d\sigma\hfill
\end{multline*}
\begin{align*}
&=\int_{B_R}k_sf(u)(x\cdot \nabla u)\dd x=k_s\int_{B_R}\left[\diver[xF(u)]-NF(u)\right]\dd x\\
&=k_s\int_{\partial B_R}F(u)(x\cdot \eta)\dd\sigma-k_sN\int_{B_R}F(u)\dd x.
\end{align*}
It follows that
\begin{equation}\label{J1}\lim_{\delta\to 0}J_1(R,\delta)=-\left[k_s\int_{\partial B_R}F(u)(x\cdot \eta)\dd\sigma-k_sN\int_{B_R}F(u)\dd x\right].\end{equation}

Now we observe that
\[\left|\int_{\partial B_R}F(u)(x\cdot \eta)\dd\sigma\right|\leq R \int_{\partial B_R}|F(u)|\dd\sigma\]
and
\[\left|\int_{\partial D^2_{R,\delta}}y^{1-2s}\left[\frac{1}{R}(z\cdot\nabla w)^2-R\frac{|\nabla w|^2}{2}\right]\dd\sigma\right|\leq2 R\int_{\partial D^2_{R,\delta}}y^{1-2s}|\nabla w|^2\dd\sigma.\]

We claim that there exists a sequence $(R_n)$ such that, when $R_n\to \infty$,
\begin{equation}\label{Rn}\lim_{n\to\infty}R_n\int_{\partial B_{R_n}}|F(u)|\dd\sigma=0=\lim_{n\to\infty}R_n\int_{\partial D^2_{R,\delta}}y^{1-2s}|\nabla w|^2\dd\sigma.\end{equation}

Supposing the contrary, there exist $\tau>0$ 
%
$R_1>0$ such that, for all $R\geq R_1$,
\[\int_{\partial B_{R_1}}|F(u)|\dd\sigma\geq \frac{\tau}{R},\]
from what follows
\[\int_{\mathbb{R}^N}|F(u)|\dd x\geq \int_{R_1}^\infty\int_{\partial B_{R_1}}|F(u)|\dd\sigma\dd R\geq \int_{R_1}^\infty\frac{\tau}{R}\dd R=\infty,\]
a contradiction. The same argument also applies to the second integral in \eqref{Rn} and proves the claim, which yields not only that 
\[\lim_{n\to\infty}\lim_{\delta\to 0}J_1(R,\delta)=-k_sN\int_{\mathbb{R}^N}F(u)\dd x\]
but also
\[\lim_{n\to\infty}\lim_{\delta\to 0}J_2(R,\delta)=0.\]

By considering the same sequence $(R_n)$, it holds
\[\lim_{n\to\infty}\lim_{\delta\to 0}J_3(R_n,\delta)=-\frac{N-2s}{2}\iint_{\mathbb{R}^{N+1}_+}y^{1-2s}|\nabla w|^2\dd x\dd y.\]

We now analyze $J_4(R,\delta)$:
\begin{align*}J_4(R,\delta)&=\iint_{D^+_{R,\delta}}m^2y^{1-2s}w\,(z\cdot\nabla w)\dd x\dd y\\
&=m^2\iint_{D^+_{R,\delta}}y^{1-2s}\left(z\cdot\nabla \left(\frac{w^2}{2}\right)\right)\dd x\dd y.\end{align*}

For this, we consider the field $\Phi=y^{1-2s}\frac{w^2}{2}z$. Since
\begin{align*}
\diver \Phi
&=\frac{N+2-2s}{2}y^{1-2s}w^2+y^{1-2s}\left(z\cdot \nabla\left(\frac{w^2}{2}\right)\right),
\end{align*}
the divergence theorem yields
\begin{multline*}
m^2\iint_{D^+_{R,\delta}}y^{1-2s}\left(z\cdot\nabla \left(\frac{w^2}{2}\right)\right)\dd x\dd y\hfill
\end{multline*}
\begin{align*}
&=-m^2\frac{N+2-2s}{2}\iint_{D^+_{R,\delta}}y^{1-2s}w^2\dd x\dd y-m^2\int_{\partial D^1_{R,\delta}}y^{1-2s}y\frac{w^2}{2}\dd\sigma\\
&\qquad+m^2\int_{\partial D^2_{R,\delta}}y^{1-2s}R\frac{w^2}{2}\dd\sigma.
\end{align*}

The same argument applied before shows that
\[\lim_{n\to\infty}\lim_{\delta\to 0}J_4(R,\delta)=-m^2\frac{N+2-2s}{2}\iint_{\mathbb{R}^{N+1}_+}y^{1-2s}w^2\dd x\dd y.\]

Collecting our results, we conclude the Pohozaev-type identity in $\mathbb{R}^{N+1}_+$
\begin{multline*}
\frac{N-2s}{2}\iint_{ \mathbb{R}^{N+1}_+}y^{1-2s}|\nabla w|^2\dd x\dd y+m^2\frac{N+2-2s}{2}\iint_{\mathbb{R}^{N+1}_+}y^{1-2s}w^2\dd x\dd y\hfill\end{multline*}
\vspace*{-.3cm}\begin{align}\label{Poho1}
&=Nk_s\int_{ \mathbb{R}^{N}}F(w(x,0))\dd x.
\end{align}

Of course, equation \eqref{Poho1} is direct applicable to problem \eqref{P}. As mentioned before, it is not difficult to consider $f(x,u)$ instead of $f(u)$. 

\section{A Pohozaev-type identity for the problem in $\mathbb{R}^N$}
In this section we obtain a Pohozaev-type identity for the problem
\[(-\Delta+m^2)^su=f(u)\ \ \text{in }\ \mathbb{R}^N.\]

Since problem \eqref{P} is an extension of problem \eqref{original}, it satisfies
\begin{equation}\label{inital}\left\{\begin{array}{ll}
\displaystyle\Delta_x w+\frac{1-2s}{y}w_y+w_{yy}-m^2w=0 &\text{in }\ \mathbb{R}^{N+1}_+\vspace{.1cm} \\
w(x,0)=u(x), &x\in \mathbb{R}^{N},\end{array}\right.
\end{equation}
In particular, $F(w(x,0))=F(u)$ and the right-hand side of \eqref{Poho1} causes no problem.

We now interpret the integrals in $ \mathbb{R}^{N+1}_+$ as integrals in $\mathbb{R}^{N}$. We start with a technical result, which has a tricky proof.
\begin{lemma}\label{l1}If $\mathcal{K}(\Phi)$ is the functional given by \eqref{KPhi}, then
\[\mathcal{K}(\Phi_s)=k_s.\]
\end{lemma}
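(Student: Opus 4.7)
The plan is to put the Bessel equation into Sturm–Liouville form and then use a single integration by parts, with the boundary terms read off from the asymptotics \eqref{asymp}. Multiplying the ODE \eqref{Phi} by $y^{1-2s}$ we obtain the equivalent form
\begin{equation*}
(y^{1-2s}\Phi_s'(y))' = y^{1-2s}\Phi_s(y),
\end{equation*}
which is exactly the divergence form that matches the weighted measure $y^{1-2s}\dd y$ appearing in $\mathcal{K}$.

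Next I would multiply this identity by $\Phi_s$ and integrate on $(0,\infty)$. Integration by parts yields
\begin{equation*}
\int_0^\infty y^{1-2s}|\Phi_s(y)|^2\dd y = \bigl[\Phi_s(y)\,y^{1-2s}\Phi_s'(y)\bigr]_{0}^{\infty} - \int_0^\infty y^{1-2s}|\Phi_s'(y)|^2\dd y,
\end{equation*}
so that once the boundary term is computed we recover $\mathcal{K}(\Phi_s)$ directly on the left.

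The computation of the boundary term is the only delicate step. Using the asymptotics \eqref{asymp}, at $y=\infty$ the exponential decay of $\Phi_s$ (and hence of $\Phi_s'$) dominates the polynomial factor $y^{1-2s}$, so the upper limit contributes $0$. At $y\to 0^+$, differentiating $\Phi_s(y)\sim 1-c_1 y^{2s}$ gives $\Phi_s'(y)\sim -2sc_1\, y^{2s-1}$, whence
\begin{equation*}
\lim_{y\to 0^+}\Phi_s(y)\,y^{1-2s}\Phi_s'(y) = 1\cdot\lim_{y\to 0^+}(-2sc_1\, y^{1-2s}y^{2s-1}) = -2sc_1,
\end{equation*}
so the boundary term equals $0-(-2sc_1)=2sc_1$. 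Combining this with the definition of $c_1$ in \eqref{constants}, namely $2sc_1=2^{1-2s}\Gamma(1-s)/\Gamma(s)=k_s$, we conclude
\begin{equation*}
\mathcal{K}(\Phi_s) = \int_0^\infty\bigl(|\Phi_s|^2+|\Phi_s'|^2\bigr)y^{1-2s}\dd y = 2sc_1 = k_s.
\end{equation*}

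The main obstacle I anticipate is justifying that the asymptotic $\Phi_s(y)\sim 1-c_1 y^{2s}$ can be differentiated to yield $\Phi_s'(y)\sim -2sc_1 y^{2s-1}$ near $y=0$; this is standard for solutions of a linear second-order ODE with regular singularity at the origin, but it must be invoked to make the evaluation of the boundary term rigorous. Everything else is a one-line integration by parts once the Sturm–Liouville form is recognized.
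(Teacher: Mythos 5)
Your proposal is correct and is essentially the paper's own argument: the paper likewise rewrites $[\Phi_s^2+(\Phi_s')^2]y^{1-2s}$ as the exact derivative $\frac{\dd}{\dd y}(\Phi_s\Phi_s'y^{1-2s})$ (i.e.\ the Sturm--Liouville form multiplied by $\Phi_s$ and integrated by parts) and evaluates the same boundary term via \eqref{asymp}, obtaining $2sc_1=k_s$. The only difference is presentational, and the caveat you raise about differentiating the asymptotic expansion at $y=0$ is implicitly assumed in the paper as well.
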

\begin{proof}Since $\Phi_s$ satisfies \eqref{Phi}, we have
\[\Phi^2_s=\frac{1-2s}{y}\Phi'_s\Phi_s+\Phi''_s\Phi_s,\]
from what follows
\begin{align*}
\Phi^2_sy^{1-2s}&=(\Phi'_s\Phi_s)\frac{\dd}{\dd y}y^{1-2s}+(\Phi''_s\Phi_s) y^{1-2s}
=\frac{\dd}{\dd y}(\Phi'_s\Phi_s y^{1-2s})-(\Phi'_s)^2y^{1-2s},
\end{align*}
thus showing that
\[[\Phi^2_s+(\Phi'_s)^2]y^{1-2s}=\frac{\dd}{\dd y}(\Phi_s\Phi'_sy^{1-2s}).\]
	
	Therefore,
\begin{align*}
\int_0^\infty [\Phi^2_s+(\Phi'_s)^2]y^{1-2s}\dd y=\lim_{y\to 0}-\Phi_s\Phi'_sy^{1-2s}=-\lim_{y\to 0}\Phi'_sy^{1-2s}=2sc_1=k_s
\end{align*}
and we are done.
$\hfill\Box$\end{proof}

A second technical result that will be necessary in our analysis is the following:
\begin{lemma}\label{l2}It holds
\[\int_0^\infty [\Phi_s(t)]^2t^{1-2s}\dd t=sk_s.\]
\end{lemma}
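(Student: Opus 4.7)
The strategy is to derive a second scalar identity relating the two quantities
\[
A := \int_0^\infty \Phi_s(t)^2\,t^{1-2s}\,\dd t \quad\text{and}\quad B := \int_0^\infty \Phi_s'(t)^2\,t^{1-2s}\,\dd t,
\]
so that, combined with Lemma \ref{l1} (which yields $A+B=k_s$), one can solve for $A$. The extra relation comes from a one-dimensional Pohozaev-type identity for the ODE \eqref{Phi}, obtained by testing it against the scaling generator $y\Phi_s'$.

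First I would rewrite \eqref{Phi} in divergence form as $(y^{1-2s}\Phi_s')' = y^{1-2s}\Phi_s$, multiply by $y\Phi_s'$, and integrate over $(0,\infty)$ to obtain
\[
\int_0^\infty y\Phi_s'\,(y^{1-2s}\Phi_s')'\,\dd y \;=\; \int_0^\infty y^{2-2s}\,\Phi_s\Phi_s'\,\dd y.
\]
For the right-hand side, writing $\Phi_s\Phi_s' = (\Phi_s^2/2)'$ and integrating by parts yields $-(1-s)A$, since $y^{2-2s}\Phi_s^2 \to 0$ at $y=0$ (because $2-2s>0$ and $\Phi_s(0)=1$) and at $y=\infty$ (by the exponential decay in \eqref{asymp}). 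For the left-hand side, one integration by parts with $u=y\Phi_s'$ and $\dd v=(y^{1-2s}\Phi_s')'\,\dd y$ produces
\[
\bigl[y^{2-2s}(\Phi_s')^2\bigr]_0^\infty - B - \int_0^\infty y^{2-2s}\,\Phi_s'\Phi_s''\,\dd y,
\]
and a second IBP, after writing $\Phi_s'\Phi_s'' = ((\Phi_s')^2/2)'$, turns the remaining integral into $-(1-s)B$; again the boundary terms vanish, because the asymptotics \eqref{asymp} give $(\Phi_s')^2 \sim 4s^2c_1^2\,y^{4s-2}$ near $y=0$, so $y^{2-2s}(\Phi_s')^2 \sim 4s^2c_1^2\,y^{2s}\to 0$, while at $\infty$ the exponential decay dominates any polynomial weight. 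Thus the left-hand side equals $-B+(1-s)B = -sB$.

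Equating the two sides gives the scaling relation $sB = (1-s)A$; substituting $B = k_s - A$ from Lemma \ref{l1} yields $s(k_s - A) = (1-s)A$, whence $A = sk_s$, as claimed. The only delicate point is the bookkeeping of the boundary contributions at $y=0$, where $\Phi_s'$ blows up like $y^{2s-1}$: the whole computation hinges on the weight $y^{2-2s}$ supplying exactly the right order of vanishing to annihilate every endpoint term. Once that is verified from \eqref{asymp}, the rest is mechanical.
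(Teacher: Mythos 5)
Your proof is correct and in substance identical to the paper's: both arguments reduce to the scaling relation $(1-s)\int_0^\infty\Phi_s^2t^{1-2s}\dd t=s\int_0^\infty(\Phi_s')^2t^{1-2s}\dd t$, obtained by the same two integrations by parts against the weight $t^{2-2s}$ (the paper starts from $A$ and substitutes the ODE, you test the divergence-form ODE against $y\Phi_s'$ — the same computation read in the other direction), and then combine it with $A+B=k_s$ from Lemma \ref{l1}. Your explicit verification that the endpoint terms vanish via \eqref{asymp} is a welcome addition, as the paper leaves this implicit.
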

\begin{proof}Integration by parts yields
\begin{multline*}
\int_0^\infty [\Phi_s(t)]^2t^{1-2s}\dd t\hfill
\end{multline*}
\begin{align*}
&=-\frac{1}{1-s}\int_0^\infty\Phi'_s(t)\left(\frac{1-2s}{t}\Phi'_s(t)+\Phi''_s(t)\right)t^{2-2s}\dd t\\
&=-\frac{1-2s}{1-s}\int_0^\infty [\Phi'_s(t)]^2t^{1-2s}\dd t-\frac{1}{1-s}\int_0^\infty \Phi'_s(t)\Phi''_s(t)t^{2-2s}\dd t\\
&=-\frac{1-2s}{1-s}\int_0^\infty [\Phi'_s(t)]^2t^{1-2s}\dd t+\int_0^\infty [\Phi'_s(t)]^2t^{1-2s}\dd t\\
&=\frac{s}{1-s}\left[\int_0^\infty[\Phi'_s(t)]^2t^{1-2s}\dd t+\int_0^\infty[\Phi_s(t)]^2t^{1-2s}\dd t\right]-\frac{s}{1-s}\int_0^\infty[\Phi_s(t)]^2t^{1-2s}\dd t\\
&=\frac{s}{1-s}k_s-\frac{s}{1-s}\int_0^\infty[\Phi_s(t)]^2t^{1-2s}\dd t,
\end{align*}
from what follows our result.
$\hfill\Box$\end{proof}

We are now in position to translate the Pohozaev-type identity in $\mathbb{R}^{N+1}_+$ into terms of integrals in $u$. We begin by writing the left-hand side of \eqref{Poho1} as
\[\frac{N-2s}{2}\iint_{ \mathbb{R}^{N+1}_+}y^{1-2s}\left[|\nabla w|^2+m^2w^2\right]\dd x\dd y+m^2\iint_{\mathbb{R}^{N+1}_+}y^{1-2s}w^2\dd x\dd y.\]

Observe that
\begin{align*}
\int_{\mathbb{R}^{N}}|\nabla w(x,y)|^2\dd x&=\int_{\mathbb{R}^{N}}\left(|\nabla_x w(x,y)|^2+\left|\frac{\partial w}{\partial y}(x,y)\right|^2\right)\dd x\\
&=\int_{\mathbb{R}^{N}}\left(4\pi^2|\xi|^2|\hat{w}(\xi,y)|^2+\left|\frac{\partial \hat{w}}{\partial y}(\xi,y)\right|^2\right)\dd \xi.
\end{align*}

By making use of the expression for $\hat{w}(\xi,y)$ given by \eqref{solFourier} and denoting $c=\sqrt{m^2+4\pi^2|\xi|^2}$, by multiplying the last equality by $y^{1-2s}$ and integrating in $y$ we obtain
\begin{multline*}
\iint_{\mathbb{R}^{N+1}_+}\left(|\nabla w(x,y)|^2+m^2|w(x,y)|^2\right)y^{1-2s}\dd x\dd y
\end{multline*}\vspace*{-.3cm}
\begin{align*}
&=\int_0^{\infty}\int_{\mathbb{R}^{N}}|\nabla w(x,y)|^2y^{1-2s}\dd x\dd y+\int_0^\infty\int_{\mathbb{R}^{N}}m^2|w(x,y)|^2y^{1-2s}\dd x\dd y\\
&=\int_0^\infty\int_{\mathbb{R}^{N}}\left(c^2|\hat{w}(\xi,y)|^2+\left|\frac{\partial \hat{w}}{\partial y}(\xi,y)\right|^2\right)y^{1-2s}\dd \xi\dd y\\
&=\int_0^\infty\int_{\mathbb{R}^{N}}\left(c^2|\hat{u}(\xi)|^2|\Phi_s(cy)|^2+|\hat{u}(\xi)c\,\Phi'_s(cy)|^2\right)y^{1-2s}\dd \xi\dd y\\
&=\int_0^\infty\int_{\mathbb{R}^{N}}c^2|\hat{u}(\xi)|^2\left(|\Phi_s(cy)|^2+|\Phi'_s(cy)|^2\right)y^{1-2s}\dd \xi\dd y,\\
&=\int_{\mathbb{R}^{N}}c^{2s}|\hat{u}(\xi)|^2\dd\xi\left(\int_0^{\infty}\left(|\Phi_s(t)|^2+|\Phi'_s(t)|^2\right)t^{1-2s}\dd t\right)\\
&=\mathcal{K}(\Phi_s)\int_{\mathbb{R}^{N}}c^{2s}|\hat{u}(\xi)|^2\dd\xi
=k_s\int_{\mathbb{R}^{N}}\left(4\pi^2|\xi|^2+m^2\right)^{s}|\hat{u}(\xi)|^2\dd\xi,
\end{align*}
as a consequence of Lemma \ref{l1}.

We conclude that
\begin{equation}\label{intnorm}\iint_{ \mathbb{R}^{N+1}_+}y^{1-2s}\left[|\nabla w|^2+m^2w^2\right]\dd x\dd y=k_s\int_{ \mathbb{R}^N}\left|(m^2-\Delta)^{s/2}u(x)\right|^2\dd x.\end{equation}

By applying  Plancherel's identity, we interpret the last integral in $w$ in \eqref{Poho1} as a integral in $\mathbb{R}^{N}$.
\begin{multline*}
m^2\!\iint_{\mathbb{R}^{N+1}_+}y^{1-2s}w^2\dd x\dd y\hfill
\end{multline*}
\begin{align*}
&=m^2\!\int_{ \mathbb{R}^N}|\hat{u}(\xi)|^2\dd\xi\int_0^\infty\Phi^2_s\left(\sqrt{m^2+4\pi^2|\xi|^2}\,y\right)y^{1-2s}\dd y\dd\xi.\end{align*}

Changing variables, we obtain that
\begin{multline*}
\int_0^\infty\Phi^2_s\left(\sqrt{m^2+4\pi^2|\xi|^2}\,y\right)y^{1-2s}\dd y\hfill
\end{multline*}
\vspace*{-.3cm}\begin{align*}&=\int_{0}^{\infty}[\Phi_s(t)]^2\frac{t^{1-2s}}{\left(m^2+4\pi^2|\xi|^2\right)^{(1-2s)/2}}\frac{\dd t}{\left(m^2+4\pi^2|\xi|^2\right)^{1/2}},
\end{align*}
so that
\begin{align*}m^2\iint_{\mathbb{R}^{N+1}_+}y^{1-2s}w^2\dd x\dd y&=m^2\int_{ \mathbb{R}^N}\frac{|\hat{u}(\xi)|^2}{\left(m^2+4\pi^2|\xi|^2\right)^{1-s}}\dd\xi\int_0^\infty [\Phi_s(t)]^2t^{1-2s}\dd t.
\end{align*}

It follows from Lemma \ref{l2} the desired Pohozaev-type identity:
\begin{multline*}\frac{N-2s}{2}k_s\int_{ \mathbb{R}^N}\left|(m^2-\Delta)^{s/2}u(x)\right|^2\dd x+sk_sm^2\int_{ \mathbb{R}^N}\frac{|\hat{u}(\xi)|^2}{\left(m^2+4\pi^2|\xi|^2\right)^{1-s}}\dd\xi\hfill\end{multline*}
\vspace*{-.2cm}\begin{align}\label{Poho}
&=Nk_s\int_{ \mathbb{R}^N}F(u)\dd x,
\end{align}
thus showing that the Pohozaev-type identity does not depend on $k_s$.

\section{A non-existence result}\label{Prooft2}

In this section we show that the problem
\begin{equation}\label{inexistence}
(-\Delta +m^2)^s\, u=|u|^{p-2}u\quad\text{in }\ \mathbb{R}^N\end{equation}
has no solution $u\neq 0$ if $p\geq 2^{*}_s$.

Applying the Pohozaev-type identity \eqref{Poho} to the problem \eqref{inexistence}, we obtain
\begin{multline*}\frac{N-2s}{2}\int_{\mathbb{R}^{N}}|(-\Delta+m^2)^{s/2}u|^2\dd x+m^2 s\int_{\mathbb{R}^N}\frac{|\hat{u}(\xi)|^2}{(m^2+4\pi^2|\xi|^2)^{1-s}} \dd\xi\hfill
\end{multline*}
\begin{align}\label{inexistence1}
&=\frac{N}{p}\int_{\mathbb{R}^N}|u|^p\dd x.
\end{align}

Since $u$ is a solution of \eqref{inexistence}, it satisfies Corollary \ref{corsolution} for $f(u)=|u|^{p-2}u$. Thus,
\[\int_{ \mathbb{R}^N}(-\Delta+m^2)^{s/2}u(-\Delta+m^2)^{s/2}v \dd x=\int_{ \mathbb{R}^N}|u|^{p-2}uv\,\dd x\]
for any $v\in H^{s}(\mathbb{R}^N)$. Choosing $v=u$, we have
\begin{equation*}\label{inexistence2}
\int_{ \mathbb{R}^N}|(-\Delta+m^2)^{s/2}u|^2\dd x=\int_{ \mathbb{R}^N}|u|^p\dd x.
\end{equation*}
Substituting \eqref{inexistence2} into \eqref{inexistence1}, we obtain
\[\left(\frac{N}{p}-\frac{N-2s}{2}\right)\int_{ \mathbb{R}^N}|u|^p\dd x=m^2s\int_{\mathbb{R}^N}\frac{|\hat{u}(\xi)|^2}{(m^2+4\pi^2|\xi|^2)^{1-s}}\dd\xi>0,\]
from what follows that $\left(\frac{N}{p}-\frac{N-2s}{2}\right)>0$ and thus $p<2N/(N-2s)=2^{*}_s$.

As a consequence, the constant
\[0<\Lambda=\inf_{u\in H^{s}(\mathbb{R}^N)\setminus\{0\}}\frac{\displaystyle\int_{ \mathbb{R}^N}(m^2+4\pi^2|\xi|^2)^{s}|\hat{u}(\xi)|^2\dd\xi}{\displaystyle\left(\int_{ \mathbb{R}^N}|u|^{2^{*}_s}\dd x\right)^{2/2^{*}_s}}<\infty\]
is not attained.

It is well-known (see Cotsiolis and Tavoularis \cite{Cotsiolis}) that the function $U(x)=c(\mu^2+(x-x_0)^2)^{-(N-2s)/2}$, where $c$ and $\mu$ are constants, with $c$ chosen so that  $|U|_{2^{*}_s}=1$, attains the Sobolev constant
\[S=\inf_{u\in H^{s}(\mathbb{R}^N)\setminus\{0\}}\frac{\displaystyle\int_{ \mathbb{R}^N}(2\pi|\xi|)^{2s}\,|\hat{u}(\xi)|^2\dd\xi}{\displaystyle\left(\int_{ \mathbb{R}^N}|u|^{2^{*}_s}\dd x\right)^{2/2^{*}_s}}=\int_{ \mathbb{R}^N}(2\pi|\xi|)^{2s}\,|\hat{U}(\xi)|^2\dd\xi.\]
If $N > 4s$, then $U\in H^{s}(\mathbb{R}^N)$. 

We will show that $\Lambda=S$.

Of course, we have $\Lambda\geq S$, since
\[\int_{ \mathbb{R}^N}(m^2+4\pi^2|\xi|^2)^{s}|\hat{u}(\xi)|^2\dd\xi\geq \int_{ \mathbb{R}^N}(2\pi|\xi|)^{2s}\,|\hat{u}(\xi)|^2\dd\xi,\quad\forall\, u\in H^{s}(\mathbb{R}^N).\]

In order to show the opposite inequality, we define $v_t(x)=U(tx)$ for $t>0$. Changing variables, we obtain:\\
\begin{align*}(i)& \int_{\mathbb{R}^N}|\hat{v}_t(\xi)|^2\dd\xi=t^{-2N}\int_{\mathbb{R}^N}|\hat{U}(\xi/t)|^2\dd\xi=t^{-N}\int_{\mathbb{R}^N}|\hat{U}(\xi)|^2\dd\xi;\\
(ii)& \int_{\mathbb{R}^N}(2\pi|\xi|)^{2s}|\hat{v}_t(\xi)|^2\dd\xi=t^{-2N}\int_{\mathbb{R}^N}(2\pi|\xi|)^{2s}|\hat{U}(\xi/t)|^2\dd\xi\\
&\qquad\qquad=t^{-N+2s}\int_{\mathbb{R}^N}(2\pi)^{2s}|\xi||\hat{U}(\xi)|^2\dd\xi\\
(iii)&\int_{\mathbb{R}^N}|v_t(x)|^{2^{*}_s}\dd x=t^{-N}\int_{\mathbb{R}^N}|U(x)|^{2^{*}_s}\dd x=t^{-N}.
\end{align*}

It immediately follows from ($iii$) that
\[\left(\int_{\mathbb{R}^N}|v_t(x)|^{2^{*}_s}\dd x\right)^{2/2^{*}_s}=t^{-N+2s}\left(\int_{\mathbb{R}^N}|U(x)|^{2^{*}_s}\dd y\right)^{2/2^{*}_s}=t^{-N+2s}.\]

Thus,
\begin{align*}
\Lambda&\leq\frac{\displaystyle\int_{\mathbb{R}^N}(m^2+4\pi^2|\xi|^2)^{s}|\hat{v}_t(\xi)|^2\dd\xi}{\left(\displaystyle\int_{\mathbb{R}^N}|v_t|^{2^{*}_s}\dd x\right)^{2/2^{*}_s}}\\
&\leq\frac{m^{2s}\displaystyle\int_{\mathbb{R}^N}|\hat{v}_t(\xi)|^2\dd\xi+\int_{\mathbb{R}^N}(2\pi|\xi|)^{2s}\,|\hat{v}_t(\xi)|^2\dd\xi}{t^{-N+2s}}\\
&=\frac{m^{2s}t^{-N}|U|^2_2}{t^{-N+2s}}+\frac{t^{-N+2s}\displaystyle\int_{\mathbb{R}^N}(2\pi|\xi|)^{2s}\,|\hat{v}_t(\xi)|^2\dd\xi}{t^{-N+2s}}\\
&=\frac{m^{2s}}{t^{2s}}|U|^2_2+S.
\end{align*}
Making $t\to\infty$, we obtain $\Lambda\leq S$, completing the proof of $\Lambda= S$.

\section{The solution of a asymptotic linear problem}\label{aplication}
In this section we will prove existence of solution for the problem
\begin{equation}\label{prn}
(-\Delta +m^2)^s\, u=f(u)\quad\text{in }\ \mathbb{R}^N.
\end{equation}
when $f$ satisfies
\begin{enumerate}
\item [$(f_1)$] $f\colon\mathbb{R}\to \mathbb{R}$ is a $C^1$ function such that $f(t)/t$ is increasing if $t>0$ and decreasing if $t<0$;
\item [$(f_2)$] $\displaystyle\lim_{t\to 0} \frac{f(t)}{t}=0\quad\text{and}\quad\lim_{t\to \infty}\frac{f(t)}{t}=k\in (m^{2s},\infty]$;
\item [$(f_3)$] $\displaystyle\lim_{|t|\to\infty}tf(t)-2F(t)=\infty$, where $F(t)=\int_0^t f(\tau)\dd \tau$.
\end{enumerate}

Our hypotheses on $f$ imply that the non-quadraticity condition is satisfied by our problem, that is, $tf(t)-2F(t)>0$ for all $t\neq 0$ and ($f_3$). A model problem is given by
\[f(t)=c\frac{t^3}{1+t^2},\]
where $c>m^{2s}$ is a constant.

It follows from our hypotheses that
\begin{equation}\label{fandF}
|f(t)|\leq\epsilon |t|+C_\epsilon |t|^{p-1}\quad\text{and}\quad F(t)\leq\epsilon |t|^2+C_\epsilon |t|^{p},\end{equation}
for all $2<p <2^{*}_s$, where $2^*_s=2N/(N-2s)$.

\begin{lemma}\label{lt2f-F}
For each $t>0$ and $u\in H^{s}(\mathbb{R}^N)$ it holds
\[\frac{t^2}{2}f(u)u-F(tu)\leq \frac{1}{2}f(u)u-F(u).\]
\end{lemma}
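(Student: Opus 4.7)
The plan is to reduce the claim to a pointwise statement: fix $x\in\mathbb{R}^N$, set $a=u(x)\in\mathbb{R}$, and show that the scalar function
\[
g(t)=\frac{t^2}{2}\,f(a)\,a-F(ta),\qquad t>0,
\]
attains its maximum on $(0,\infty)$ at $t=1$. Integrating (or simply evaluating pointwise at every $x$) then gives the stated inequality for $u\in H^{s}(\mathbb{R}^N)$.

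The first step is a direct computation: $g'(t)=t f(a)\,a-a\,f(ta)=a\bigl[t f(a)-f(ta)\bigr]$, which vanishes at $t=1$. The heart of the argument is then to show $g'(t)>0$ for $t\in(0,1)$ and $g'(t)<0$ for $t>1$, so that $t=1$ is the unique maximizer.

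The sign analysis is where hypothesis $(f_1)$ is used. If $a>0$ and $t\in(0,1)$, then $0<ta<a$, and the monotonicity of $f(\tau)/\tau$ for $\tau>0$ gives $f(ta)/(ta)<f(a)/a$; multiplying by $ta>0$ yields $f(ta)<t f(a)$, hence $a[t f(a)-f(ta)]>0$. For $t>1$ the same reasoning with $a<ta$ reverses the inequality. If $a<0$ and $t\in(0,1)$, then $a<ta<0$, and the monotonicity of $f(\tau)/\tau$ for $\tau<0$ gives $f(ta)/(ta)<f(a)/a$; multiplying by $ta<0$ reverses signs and produces $f(ta)>t f(a)$, so $t f(a)-f(ta)<0$, and then multiplying by $a<0$ yields $a[t f(a)-f(ta)]>0$. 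The case $t>1$ is analogous, and for $a=0$ one has $f(0)=0$ by $(f_2)$, so $g\equiv0$.

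Combining these, $g$ is strictly increasing on $(0,1)$ and strictly decreasing on $(1,\infty)$, so $g(t)\le g(1)$ for every $t>0$, which is exactly the desired inequality evaluated at $x$. The only nontrivial point is the careful sign bookkeeping when $a<0$; once that is done, the result is an immediate one-variable monotonicity argument built on $(f_1)$.
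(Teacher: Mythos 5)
Your proof is correct and follows essentially the same route as the paper: differentiate $\psi(t)=\frac{t^2}{2}f(a)a-F(ta)$, observe $\psi'(1)=0$, and use the monotonicity of $f(\tau)/\tau$ from $(f_1)$ to see that $\psi'>0$ on $(0,1)$ and $\psi'<0$ on $(1,\infty)$; the paper just compresses your case analysis by writing $\psi'(t)=tu^2\bigl[\frac{f(u)}{u}-\frac{f(tu)}{tu}\bigr]$, which handles both signs of $u(x)$ at once. Your explicit sign bookkeeping for $a<0$ and the remark that $f(0)=0$ are fine and fill in details the paper leaves implicit.
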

\begin{proof}Define $\psi(t)=\frac{t^2}{2}f(u)u-F(tu)$. Then $\psi'(t)=tf(u)u-f(tu)u$, from what follows $\psi'(1)=0$. Since \[\psi'(t)=tu^2\left[\frac{f(u)}{u}-\frac{f(tu)}{tu}\right],\quad u\neq 0,\]
we have $\psi'(t)>0$ if $0<t<1$ and $\psi'(t)<0$ if $t>1$.Thus, $\psi(1)=\displaystyle\max_{t\geq0}\psi(t)>0$ and our claim follows.
$\hfill\Box$\end{proof}

Observe that Definition \ref{defsolution} is satisfied by critical points of the functional
\[\Phi(u)=\frac{1}{2}\int_{ \mathbb{R}^N}\left|(-\Delta+m^2)^{s/2}u\right|^2\dd x-\int_{ \mathbb{R}^N}F(u)\dd x=\frac{1}{2}\|u\|^2-\int_{ \mathbb{R}^N}F(u)\dd x\]
and
\begin{equation}\label{Phi'}
\Phi'(u)\cdot u=\|u\|^2-\int_{\mathbb{R}^n}f(u)u\dd x.
\end{equation}

We denote by $\mathcal{N}$ the Nehari manifold naturally attached to $\Phi$:
\[\mathcal{N}=\left\{u\in H^{s}(\mathbb{R}^N)\setminus\{0\}\,:\,\Phi'(u)\cdot u=0\right\}\]
and by $P(u)$ the functional generated by the Pohozaev-type identity \eqref{Poho}:
\[P(u)=\frac{N-2s}{2}\|u\|^2
+m^2 s\int_{ \mathbb{R}^N}\frac{|\hat{u}(\xi)|^2}{(m^2+4\pi^2|\xi|^2)^{1-s}}\dd\xi-N\int_{ \mathbb{R}^N}F(u)\dd x.\]

Now, for each $u\in H^{s}(\mathbb{R}^N)$ and $t>0$, denote by
\[u_t(x)=tu\left(\frac{x}{t^2}\right)\in  H^{s}(\mathbb{R}^N)\]
and consider  
\begin{align}\label{hu}h_u(t)&=\Phi(u_t)=\frac{1}{2}\int_{ \mathbb{R}^N}\left|(-\Delta+m^2)^{s/2} tu\left(\frac{x}{t^2}\right)\right|^2\dd x-\int_{ \mathbb{R}^N}F\left(tu\left(\frac{x}{t^2}\right)\right)\dd x \nonumber\\
&=\frac{t^{4N +2}}{2}\int_{ \mathbb{R}^N}(m^2+4\pi^2|\xi|^2)^{s}|\hat{u}(t^2\xi)|^2\dd\xi-t^{2N}\int_{ \mathbb{R}^N}F(tu)\dd x \nonumber \\
&=\frac{t^{2N +2 -4s}}{2}\int_{ \mathbb{R}^N}(t^4m^2+4\pi^2|\xi|^2)^{1/2}|\hat{u}(\xi)|^2\dd\xi-t^{2N}\int_{ \mathbb{R}^N}F(tu)\dd x\nonumber\\
&=t^{2N+2}\left[\frac{1}{2}\int_{ \mathbb{R}^N}\left(m^2+\frac{4\pi^2|\xi|^2}{t^4}\right)^{s}|\hat{u}(\xi)| ^2\dd\xi-\int_{ \mathbb{R}^N}\frac{F(tu)}{(tu)^2}u^2\dd x\right].\end{align}

According to ($f_2$), the expression between brackets in \eqref{hu} converges to \[\frac{m^{2s}-k}{2}\int_{ \mathbb{R}^N}|u|^2\dd x,\] from what follows
\begin{equation}\label{hassymp}\lim_{t\to \infty}h_u(t)=-\infty.\end{equation}
But we also have that
\[\frac{h_u(t)}{t^{2N+2}}=\frac{1}{2t^{4s}}\int_{ \mathbb{R}^N}(t^4m^2+4\pi^2|\xi|^2)^{s}|\hat{u}(\xi)|^2\dd\xi-\int_{ \mathbb{R}^N}\frac{F(tu)}{(tu)^2}u^2\dd x,\]
so that ($f_2$) yields $\displaystyle\lim_{t\to 0}\frac{h_u(t)}{t^{N+2}}=\infty$, from what follows that \[h_u(t)>0, \quad\text{if }\ t>0\quad \text{is small enough}.\]

Therefore, $h_u(t)$ attains a maximum point since, for each fixed $u\in H^{s}(\mathbb{R}^N)\setminus\{0\}$, $h_u\in C^1(\mathbb{R}_+,\mathbb{R})$.

Taking the derivative of $h_u$, we obtain
\begin{align}\label{hlinhau}
h'_u(t)&=(N+ 1-2s)t^{2N+1 -4s}\int_{ \mathbb{R}^N}(t^4m^2+4\pi^2|\xi|^2)^{s}|\hat{u}(\xi)|^2\dd\xi\nonumber\\
&\qquad+2st^{2N+5-4s}\int_{ \mathbb{R}^N}\frac{m^2|\hat{u}(\xi)|^2}{(t^4m^2+4\pi^2|\xi|^2)^{1-s}}\dd\xi\nonumber\\
&\qquad-2Nt^{2N-1}\int_{ \mathbb{R}^N}F(tu)\dd x-t^{2N}\int_{ \mathbb{R}^N}f(tu)u\dd x.
\end{align}

So, when $t=1$,
\begin{align}h'_u(1)&=(N+1-2s)\int_{ \mathbb{R}^N}(m^2+4\pi^2|\xi|^2)^{s}|\hat{u}(\xi)|^2\dd\xi \nonumber\\&+2sm^2\int_{ \mathbb{R}^N}\frac{|\hat{u}(\xi)|^2}{(m^2+4\pi^2|\xi|)^{1-s}}\dd\xi-2N\int_{ \mathbb{R}^N}F(u)\dd x-\int_{ \mathbb{R}^N}f(u)u\dd x\nonumber\\
&=\Phi'(u)\cdot u+2P(u)=:J(u).\label{J}\end{align}

This motivates to consider the Nehari-Pohozaev manifold
\[\mathcal{M}=\left\{u\in H^{s}(\mathbb{R}^N)\setminus\{0\}\,:\, J(u)=0\right\}.\]

Changing variables, we observe that
\begin{align}\label{h'1}h'_{u_t}(1)=th'_u(t),\end{align}

It follows from the Pohozaev-type identity \eqref{Poho} that any solution $u\in H^{s}(\mathbb{R}^N)$ of \eqref{prn} belongs to $\mathcal{M}$, since $P(u)=0$ and $\Phi'(u)\cdot u=0$.

Furthermore, taking into accoun \eqref{h'1},
\begin{align}\label{characM}
u_t\in\mathcal{M}\quad\Leftrightarrow\quad J(u_t)=0\quad\Leftrightarrow\quad h'_{u_t}=1\quad\Leftrightarrow\quad h'_{u}(t)=0.
\end{align}
We now show that there exists a unique point $t$ where $h_u(t)$ attains its maximum.
\begin{lemma}
For each $u\in H^{s}(\mathbb{R}^N)\setminus\{0\}$, there exists a unique $t_u=t(u)>0$ such that $h_u(t)$ attains its maximum at $t_u$. The function $h_u(t)$ is positive and increasing for $t\in (0,t_u]$ and decreasing for $t>t_u$.
	
Furthermore, the function
\[u\mapsto t_u\]
is continuous and
\[u_{t_u}\in \mathcal{M}\quad\text{and}\quad\Phi(u_{t_u})=\max_{t>0}\Phi(u_t)>0.\]
\end{lemma}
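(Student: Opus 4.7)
The plan is to establish existence of the maximum from the asymptotic behavior of $h_u$ already noted in the excerpt, prove uniqueness via a monotonicity argument combined with the strict form of $(f_1)$, and deduce continuity of $u\mapsto t_u$ from the implicit function theorem.

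\textbf{Existence of the maximum.} From the paragraphs preceding the statement, $h_u\in C^1((0,\infty))$ (by \eqref{fandF} and dominated convergence), $h_u(t)\to 0^{+}$ as $t\to 0^{+}$ with $h_u(t)>0$ for $t$ small, and $h_u(t)\to-\infty$ as $t\to\infty$ by \eqref{hassymp}. Hence $h_u$ attains a strictly positive supremum at some $t_u\in(0,\infty)$, at which $h_u'(t_u)=0$.

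\textbf{Monotonicity and uniqueness.} From $(f_1)$, $\frac{\dd}{\dd t}[tf(t)-2F(t)]=tf'(t)-f(t)\ge 0$; since this quantity vanishes at $t=0$, we have $tf(t)\ge 2F(t)$, whence $\frac{\dd}{\dd t}[F(t)/t^2]=(tf(t)-2F(t))/t^3$ has the sign of $t$, so $F(t)/t^2$ is strictly increasing in $|t|$. Writing $h_u(t)=t^{2N+2}g_u(t)$ as in \eqref{hu} with
\[
g_u(t)=\tfrac{1}{2}\int_{\mathbb{R}^N}\bigl(m^{2}+\tfrac{4\pi^{2}|\xi|^{2}}{t^{4}}\bigr)^{s}|\hat u(\xi)|^{2}\dd\xi-\int_{\mathbb{R}^N}\tfrac{F(tu)}{(tu)^2}u^{2}\dd x,
\]
both summands of $g_u$ are strictly monotone in $t$: the first decreases (since $|\xi|^2/t^4$ decreases), and the second decreases (since $F(v)/v^2$ strictly increases in $|v|$ and $|tu(x)|$ increases with $t$ where $u(x)\neq 0$). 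Hence $g_u$ is strictly decreasing on $(0,\infty)$, going from $+\infty$ at $0^+$ to $\frac{m^{2s}-k}{2}\int|u|^2\dd x<0$ at $\infty$ by $(f_2)$, so $g_u$ has a unique zero $t^{*}>0$; thus $h_u>0$ on $(0,t^{*})$, $h_u<0$ on $(t^{*},\infty)$, and every critical point of $h_u$ sits in $(0,t^{*})$. Next, \eqref{h'1} gives $h_u'(t)=J(u_t)/t$, so at a critical point $t_0$ we have $J(u_{t_0})=0$ and
\[
h_u''(t_0)=\tfrac{1}{t_0}\left.\tfrac{\dd}{\dd t}J(u_t)\right|_{t=t_0}.
\]
Differentiating \eqref{J} term by term and applying Lemma~\ref{lt2f-F} pointwise (with $u$ replaced by $u_{t_0}$) together with the strict form of $(f_1)$ yields $\frac{\dd}{\dd t}J(u_t)\big|_{t=t_0}<0$, hence $h_u''(t_0)<0$. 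Every critical point of $h_u$ being a strict local maximum, two distinct critical points are impossible (they would force a local minimum in between). Thus $t_u$ is unique: $h_u$ is strictly increasing on $(0,t_u]$, strictly decreasing on $[t_u,\infty)$, and positive on $(0,t_u]$.

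\textbf{Continuity and conclusion.} Since $(u,t)\mapsto h_u'(t)$ is continuous on $H^s(\mathbb{R}^N)\times(0,\infty)$ (by \eqref{fandF} and dominated convergence) and $h_u''(t_u)\neq 0$, the implicit function theorem yields continuity of $u\mapsto t_u$. By \eqref{characM}, $u_{t_u}\in\mathcal{M}$, and $\Phi(u_{t_u})=h_u(t_u)=\max_{t>0}\Phi(u_t)>0$. The decisive and hardest step is the strict concavity $h_u''(t_u)<0$ at each critical point: the monotonicity of $g_u$ alone ensures a unique zero of $h_u$, not of its critical point. Securing the sign of $h_u''$ requires the strict form of $(f_1)$ applied via Lemma~\ref{lt2f-F}, carefully transported through the scaling $u_t(x)=tu(x/t^2)$ when differentiating \eqref{J}.
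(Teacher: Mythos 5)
Your overall strategy (existence from the asymptotics, uniqueness from a second-derivative test at critical points, continuity from the implicit function theorem) is coherent, and the reduction $h_u''(t_0)=\frac{1}{t_0}\frac{\dd}{\dd t}J(u_t)\big|_{t=t_0}$ at a critical point is correct. But the decisive inequality $\frac{\dd}{\dd t}J(u_t)\big|_{t=t_0}<0$ is asserted, not proven, and the tool you cite cannot deliver it. Writing $J(u_t)=t^{2N+2}\left[g(t)-I_1(t)-I_2(t)\right]$ as in \eqref{hlinhau2a}, at a critical point the needed sign is exactly $g'(t_0)-I_1'(t_0)-I_2'(t_0)<0$. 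The nonlinear part $I_1'+I_2'>0$ does follow from $(f_1)$ (this is what the paper verifies), but the quadratic part $g(t)$, built from $\int(t^4m^2+4\pi^2|\xi|^2)^{s}|\hat u|^2\dd\xi/t^{4s}$ and the companion integral with exponent $1-s$, produces on differentiation four terms of competing signs; showing $g'<0$ is the genuinely hard computation (the paper needs to regroup the terms and reduce to the factor $(N+1-2s)-2(1-s)\frac{t^4m^2}{t^4m^2+4\pi^2|\xi|^2}>0$). Lemma~\ref{lt2f-F} concerns only the nonlinearity $F$ and says nothing about $g$, so "applying Lemma~\ref{lt2f-F} pointwise" cannot close this step. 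You yourself flag this as "the decisive and hardest step," and it is precisely the step left undone.

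For comparison, the paper avoids second derivatives altogether: it shows $g$ is strictly decreasing and $I_1+I_2$ strictly increasing, so the equation $h_u'(t)=0$, equivalent to $g(t)=I_1(t)+I_2(t)$, has at most one root; continuity of $u\mapsto t_u$ is then obtained by a sequential argument (boundedness of $t_{u_n}$ via $(f_2)$, then passage to the limit and uniqueness), which sidesteps the regularity of $(u,t)\mapsto h_u''(t)$ that your IFT argument would additionally require. Your auxiliary observation that the bracket $g_u(t)$ in \eqref{hu} is strictly decreasing, hence $h_u$ changes sign exactly once, is correct but, as you note, does not by itself localize the critical points. To repair your proof you must carry out the explicit computation of $g'(t)$ (or adopt the paper's monotone-decomposition argument, which contains the same computation).
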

\begin{proof}We have already shown that $h_u(t)$ attains its maximum at a point $t_u$. Since $h_u\in C^1(\mathbb{R}_+,\mathbb{R})$, we have $h'_u(t_u)=0$.
	
According to \eqref{hlinhau} we have
\begin{subequations}\begin{align}
h'_u(t)=0&\Leftrightarrow
t^{2N+1}\left[\frac{(N+1-2s)}{t^{4s}}\int_{\mathbb{R}^N}\left(t^4m^2+4\pi^2|\xi|^2\right)^{s}|\hat{u}(\xi)|^2\dd\xi\right.\nonumber\\
&\quad\quad\left.+2sm^2\int_{\mathbb{R}^N}\frac{t^{4(1-s)}|\hat{u}(\xi)|^2}{(m^2t^4+4\pi^2|\xi|^2)^{1-s}}\dd\xi\right]\nonumber\\
&\qquad-t^{2N+1}\left[2N\int_{\mathbb{R}^2}\frac{F(tu)}{t^2}\dd x+\int_{\mathbb{R}^N}\frac{f(tu)u}{t}\dd x\right]=0\label{hlinhau2a}\\
&\Leftrightarrow \frac{(N+1-2s)}{t^{4s}}\int_{\mathbb{R}^N}\left(t^4m^2+4\pi^2|\xi|^2\right)^{s}|\hat{u}(\xi)|^2\dd\xi\nonumber\\
&\qquad+2sm^2\int_{\mathbb{R}^N}\frac{t^{4(1-s)}|\hat{u}(\xi)|^2}{(m^2t^4+4\pi^2|\xi|^2)^{1-s}}\dd\xi\nonumber\\
&\qquad -\frac{1}{t^2}\int_{\mathbb{R}^N}\left[2NF(tu)+f(tu)tu\right]\dd x=0\label{hlinhau2}
\end{align}\end{subequations}
	
We denote
\[I_1(t)=\frac{1}{t^2}\int_{\mathbb{R}^N}2NF(tu)\dd x,\quad I_2(t)=\frac{1}{t^2}\int_{\mathbb{R}^N}f(tu)tu\dd x\]
and
\begin{align*}g(t)&=\frac{(N+ 1-2s)}{t^{4s}}\int_{\mathbb{R}^N}\left(t^4m^2+4\pi^2|\xi|^2\right)^{s}|\hat{u}(\xi)|^2\dd\xi\\
&\qquad +2sm^2\int_{\mathbb{R}^N}\frac{t^{4(1-s)}|\hat{u}(\xi)|^2}{(m^2t^4+4\pi^2|\xi|^2)^{1-s}}\dd\xi.\end{align*}
	
It follows from $(f_1)$ that
\begin{align*}
\frac{\dd}{\dd t}I_1(t)
&=\frac{2N}{t^3}\int_{\mathbb{R}^N}\left[f(tu)tu-2F(tu)\right]\dd x>0
\end{align*}
and also
\[\frac{\dd}{\dd t}I_2(t)=\frac{\dd}{\dd t}\int_{\mathbb{R}^N}\frac{f(tu)}{tu}|u|^2\dd\xi=\int_{\mathbb{R}^N}\frac{\dd}{\dd t}\left(\frac{f(tu)}{tu}\right)|u|^2\dd x>0.\]
	
We conclude that $I_1(t)+I_2(t)$ is a strictly increasing function.
	
We will now show that $g(t)$ is strictly decreasing. In fact,

\begin{align*}
g'(t)
&=\frac{-4s(N+1 -2s)}{t^{4s+1}}\int_{ \mathbb{R}^N}(t^4m^2+4\pi^2|\xi|^2)^{s}|\hat{u}(\xi)|^2\dd\xi\\
&\quad +4s(N+1 -2s)\int_{ \mathbb{R}^N}\frac{t^{3-4s}m^2|\hat{u}(\xi)|^2}{(t^4m^2+4\pi^2|\xi|^2)^{1-s}}\dd\xi\\
&\quad+2sm^24(1-s)t^{4(1-s)-1}\int_{ \mathbb{R}^N}\frac{|\hat{u}(\xi)|^2}{(t^4m^2+4\pi^2|\xi|^2)^{1-s}}\dd\xi\\
&\quad-2sm^4 4(s-1)t^{4(1-s) + 3}\int_{ \mathbb{R}^N}\frac{|\hat{u}(\xi)|^2}{(t^4m^2+4\pi^2|\xi|^2)^{2 -s}}\dd\xi\\
&=\frac{-4s(N+1 -2s)}{t^{4s +1}}\int_{\mathbb{R}^N}\left[\left(t^4m^2+4\pi^2|\xi|^2\right)^{s}-\frac{m^2t^4}{(t^4m^2+4\pi^2|\xi|^2)^{1-s}}\right]|\hat{u}(\xi)|^2\dd\xi\\
&\quad+2sm^2 4(1-s)t^{4(1-s)-1}\int_{\mathbb{R}^N}\frac{|\hat{u}(\xi)|^2}{(t^4m^2+4\pi^2|\xi|^2)^{1-s}}\left[1-\frac{t^4m^2}{t^4m^2+4\pi^2|\xi|^2}\right]\dd\xi\\
&=\frac{-4s(N+1 -2s)}{t^{4s+1}}\int_{\mathbb{R}^N}\frac{4\pi^2|\xi|^2|\hat{u}(\xi)|^2}{(t^4m^2+4\pi^2|\xi|^2)^{1-s}}\dd\xi\\
&\quad+2sm^2 4(1-s)t^{4(1-s)-1}\int_{\mathbb{R}^N}\frac{4\pi^2|\xi|^2|\hat{u}(\xi)|^2}{\left(t^4m^2+4\pi^2|\xi|^2\right)^{2-s}}\dd\xi\\
&=\frac{-4s}{t^{4s +1}}\int_{\mathbb{R}^N}\frac{4\pi^2|\xi|^2|\hat{u}(\xi)|^2}{(t^4m^2+4\pi^2|\xi|^2)^{1-s}}\left[(N+1-2s)-2(1-s)\frac{t^4m^2}{t^4m^2+4\pi^2|\xi|^2}\right]\dd\xi\\
&<0.
\end{align*}
	
Thus, $g(t)-(I_1(t)+I_2(t))$ is strictly decreasing, proving the uniqueness of $t_u$.
	
To prove that the function $u\mapsto t_u$ is continuous, let us consider a sequence $(u_n)$ such that $u_n\to u$ in $H^{s}(\mathbb{R}^N)$. We denote $t_n=t_{u_n}$. We claim that $(t_n)$ is bounded.
	
To prove our claim, we observe that \eqref{hlinhau2} implies that	\begin{align}\label{hlinhau3}
h'_{u_n}(t_n)=0&\Leftrightarrow \frac{(N+1-2s)}{t^{4s}_n}\int_{\mathbb{R}^N}\left(t^4_nm^2+4\pi^2|\xi|^2\right)^s|\hat{u}_n(\xi)|^2\dd\xi\nonumber\\&\qquad+2sm^2\int_{\mathbb{R}^N}\frac{t^{4(1-s)}_n|\hat{u}_n(\xi)|^2}{(m^2t^4_n+4\pi^2|\xi|^2)^{1-s}}\dd\xi\nonumber\\
&\qquad -\int_{\mathbb{R}^N}\left[2N\frac{F(t_nu_n)}{t^2_n}+\frac{f(t_nu_n)u_n}{t_n}\right]\dd x=0.
\end{align}
	
Since $t_n>0$ for all $n$, suppose that $t_n\to\infty$. Application of the dominated convergence theorem and hypotheses $(f_1)$ and $(f_2)$ yield
\begin{align*}
\int_{ \mathbb{R}^N}\left[2N\frac{F(t_nu_n)}{t^2_n}+\frac{f(t_nu_n)}{t_n}u_n\right]\dd x&\phantom{\hspace*{.2cm}}=\phantom{\hspace*{.1cm}}\int_{ \mathbb{R}^N}\left[2N\frac{F(t_nu_n)}{|t_nu_n|^2}+\frac{f(t_nu_n)}{|t_nu_n|}\right]|u_n|^2\dd x\\
&\stackrel{n\to\infty}{\longrightarrow}\int_{ \mathbb{R}^N}\left[2N\frac{k}{2}+k\right]|u|^2\dd x\\
&\qquad\quad=(N+1)k\int_{ \mathbb{R}^N}|u|^2\dd x.
\end{align*}
	
On the other side, since
\[\frac{(N+1-2s)}{t^{4s}_n}\int_{\mathbb{R}^N}\left(t^4_nm^2+4\pi^2|\xi|^2\right)^{s}|\hat{u}_n(\xi)|^2\dd\xi+2sm^2\int_{\mathbb{R}^N}\frac{t^{4(1-s)}_n|\hat{u}_n(\xi)|^2}{(m^2t^4_n+4\pi^2|\xi|^2)^{1-s}}\dd\xi\]
converges to
\[\int_{\mathbb{R}^N}(N+1)m^{2s}|\hat{u}(\xi)|^2\dd\xi=\int_{\mathbb{R}^N}(N+1)m^{2s}|u(x)|^2\dd x,\]
it follows from \eqref{hlinhau3} that
\[(N+1)(m^{2s}-k)\int_{ \mathbb{R}^N}|u|^2\dd x=0,\]
and we have reached a contradiction.
	
Thus, we can suppose that $t_n\to t_0\in (0,\infty)$. (Observe that we already know that $t_0\neq 0$.) By applying once again the dominated convergence theorem to \eqref{hlinhau3}, we obtain both
\[\int_{\mathbb{R}^N}\left[2N\frac{F(t_nu_n)}{t^2_n}+\frac{f(t_nu_n)u_n}{t_n}\right]\dd x\stackrel{n\to\infty}{\longrightarrow}\int_{\mathbb{R}^N}\left[2N\frac{F(t_0u)}{t^2_0}+\frac{f(t_0u)u}{t_0}\right]\dd x\]
and
\begin{multline*}
\frac{(N+1-2s)}{t^{4s}_n}\int_{\mathbb{R}^N}\left(t^4_nm^2+4\pi^2|\xi|^2\right)^{s}|\hat{u}_n(\xi)|^2\dd\xi+2sm^2\int_{\mathbb{R}^N}\frac{t^{4(1-s)}_n|\hat{u}_n(\xi)|^2}{(m^2t^4_n+4\pi^2|\xi|^2)^{1-s}}\dd\xi\\
\stackrel{n\to\infty}{\rightarrow}\!\int_{\mathbb{R}^N}\frac{(N+1-2s)\left(t^4_0m^2+4\pi^2|\xi|^2\right)^{s}|\hat{u}(\xi)|^2}{t^{4s}_0}\dd\xi+2sm^2\int_{\mathbb{R}^N}\frac{t^{4(1-s)}_0|\hat{u}(\xi)|^2}{(m^2t^4_0+4\pi^2|\xi|^2)^{1-s}}\dd\xi
\end{multline*}
Thus, by passing to the limit in \eqref{hlinhau2a} yields
\begin{align*}
&t^{2N+1}_0\left[(N+1-2s)\int_{\mathbb{R}^N}\frac{\left(t^4_0m^2+4\pi^2|\xi|^2\right)^{s}|\hat{u}(\xi)|^2}{t^{4s}_0}\dd\xi\right.\\
&\left.+2sm^2\int_{\mathbb{R}^N}\frac{t^{4(1-s)}_0|\hat{u}(\xi)|^2}{(m^2t^4_0+4\pi^2|\xi|^2)^{1-s}}\dd\xi\right]
-t^{2N+1}_0\left[2N\int_{\mathbb{R}^2}\frac{F(t_0u)}{t^2_0}\dd x+\int_{\mathbb{R}^N}\frac{f(t_0u)u}{t}
\dd x\right]\\
&=0,
\end{align*}
and it follows from \eqref{hlinhau2a} that $h'_u(t_0)=0$. Uniqueness of $t_u$ imply $t_u=t_0$ and so $t_n\to t_u$. We are done.
$\hfill\Box$\end{proof}

\begin{lemma}\label{Phi-cJ}
For all $u\in H^{s}(\mathbb{R}^N)\setminus \{0\}$ we have
\[\Phi(u)-\frac{1}{2N+2}J(u)>0.\]
\end{lemma}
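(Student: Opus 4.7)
The plan is to compute the linear combination $\Phi(u)-\frac{1}{2N+2}J(u)$ in closed form and to exhibit it as a sum of two manifestly nonnegative quantities, each of which is strictly positive whenever $u\neq 0$. Since by definition $J(u)=\Phi'(u)\cdot u+2P(u)$, substituting the explicit expressions
\[
\Phi(u)=\tfrac12\|u\|^2-\int_{\mathbb{R}^N}F(u)\dd x,\quad \Phi'(u)\cdot u=\|u\|^2-\int_{\mathbb{R}^N}f(u)u\dd x,
\]
and the formula for $P(u)$ reduces the statement to an algebraic identity plus a sign check.

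Collecting coefficients, the $\|u\|^2$-coefficient simplifies to $\tfrac{1}{2}-\tfrac{N+1-2s}{2N+2}=\tfrac{s}{N+1}$, the $\int F(u)\dd x$ and $\int f(u)u\dd x$ terms combine into $\tfrac{1}{2N+2}\int_{\mathbb{R}^N}[f(u)u-2F(u)]\dd x$, and the Pohozaev auxiliary integral appears with coefficient $-\tfrac{sm^2}{N+1}$. The key manipulation is then to recognize, using $\|u\|^2=\int(m^2+4\pi^2|\xi|^2)^s|\hat u(\xi)|^2\dd\xi$, that
\[
\frac{s}{N+1}\|u\|^2-\frac{sm^2}{N+1}\!\int_{\mathbb{R}^N}\!\frac{|\hat u(\xi)|^2}{(m^2+4\pi^2|\xi|^2)^{1-s}}\dd\xi
=\frac{s}{N+1}\!\int_{\mathbb{R}^N}\!\frac{4\pi^2|\xi|^2\,|\hat u(\xi)|^2}{(m^2+4\pi^2|\xi|^2)^{1-s}}\dd\xi,
\]
by pulling out a common denominator. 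This integral is $\geq 0$ and vanishes only if $\hat u$ is supported at the origin, i.e., $u\equiv 0$.

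For the remaining piece $\tfrac{1}{2N+2}\int_{\mathbb{R}^N}[f(u)u-2F(u)]\dd x$, I would use hypothesis $(f_1)$ exactly as in Lemma \ref{lt2f-F}: setting $\phi(t)=tf(t)-2F(t)$, one has $\phi(0)=0$ and $\phi'(t)=tf'(t)-f(t)=t^2\bigl(f(t)/t\bigr)'$, so $(f_1)$ gives $\phi'(t)>0$ for $t>0$ and $\phi'(t)<0$ for $t<0$. Thus $\phi(t)>0$ for every $t\neq 0$, and since $u\not\equiv 0$ the set $\{u\neq 0\}$ has positive measure, making the integral strictly positive.

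Putting both pieces together yields $\Phi(u)-\tfrac{1}{2N+2}J(u)>0$. There is no real conceptual obstacle here: the only place to be careful is the bookkeeping of the coefficients (to make sure the $2N+2$ was chosen precisely so that the $\|u\|^2$-term and the $\int|\hat u|^2/(m^2+4\pi^2|\xi|^2)^{1-s}$-term combine with the same constant $s/(N+1)$, allowing the clean reduction to $4\pi^2|\xi|^2/(m^2+4\pi^2|\xi|^2)^{1-s}$), and the invocation of the non-quadraticity property already embedded in $(f_1)$.
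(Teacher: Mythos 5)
Your proposal is correct and follows essentially the same route as the paper: the same algebraic regrouping of $\Phi(u)-\tfrac{1}{2N+2}J(u)$ into $\tfrac{s}{N+1}\int_{\mathbb{R}^N}\frac{4\pi^2|\xi|^2|\hat u(\xi)|^2}{(m^2+4\pi^2|\xi|^2)^{1-s}}\dd\xi+\tfrac{1}{2N+2}\int_{\mathbb{R}^N}[f(u)u-2F(u)]\dd x$, followed by the observation that both terms are nonnegative and the first is strictly positive for $u\neq 0$. Your coefficient bookkeeping and the $(f_1)$-based positivity of $tf(t)-2F(t)$ match the paper's argument (the latter is even spelled out in slightly more detail than the paper bothers to).
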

\begin{proof}We have
\begin{multline*}
\Phi(u)-\frac{1}{2N+2}J(u)=\hfill
\end{multline*}
\begin{align}\label{eqPhi-cJ}
&=\frac{1}{2}\|u\|^2-\int_{ \mathbb{R}^N}F(u)\dd x-\frac{(N+1-2s)}{2N+2}\|u\|^2-\frac{2sm^2}{2N+2}\int_{ \mathbb{R}^N}\frac{|\hat{u}(\xi)|^2}{(m^2+4\pi^2|\xi|^2)^{1-s}}\dd\xi\nonumber\\
&\qquad +\frac{2N}{2N+2}\int_{\mathbb{R}^N}F(u)\dd x+\frac{1}{2N+2}\int_{ \mathbb{R}^N}f(u)u\dd x\nonumber\\
&=\frac{2s}{2N+2}\left[\int_{\mathbb{R}^N}(m^2+4\pi^2|\xi|^2)^{s}|\hat{u}(\xi)|^2\dd\xi-\int_{ \mathbb{R}^N}\frac{m^2|\hat{u}(\xi)|^2}{(m^2+4\pi^2|\xi|^2)^{1-s}}\dd\xi\right]\nonumber\\
&\qquad +\frac{1}{2N+2}\int_{ \mathbb{R}^N}\left[f(u)u-2F(u)\right]\dd x\nonumber\\
&= \frac{2s}{2N+2}\left[\int_{ \mathbb{R}^N}\frac{4\pi^2|\xi|^2|\hat{u}(\xi)|^2}{(m^2+4\pi^2|\xi|^2)^{1-s}}\dd\xi+\int_{ \mathbb{R}^N}\left[f(u)u-2F(u)\right]\dd x\right]\\
&>\frac{2s}{2N+2}\int_{ \mathbb{R}^N}\frac{4\pi^2|\xi|^2|\hat{u}(\xi)|^2}{(m^2+4\pi^2|\xi|^2)^{1-s}}\dd\xi> 0,\ \ \text{if }\ u\neq 0.\nonumber
\end{align}
	
\vspace*{-.4cm}$\hfill\Box$\end{proof}

\begin{lemma}\label{manifold}
If $f$ satisfies hypotheses $(f_1)$ and $(f_2)$, then
\begin{enumerate}
\item [$(i)$] $J(u)>0$ for all $0<\|u\|\leq\rho$ and $\mathcal{M}$ is a closed subset of $H^{s}(\mathbb{R}^N)$;
\item  [$(ii)$] For all $u\in \mathcal{M}$ we have $\Phi(u)>0$;
\item  [$(iii)$] $\mathcal{M}$ is a $C^1$-manifold.
\end{enumerate}
\end{lemma}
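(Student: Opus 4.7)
For (i), I would expand
\[J(u) = (N+1-2s)\|u\|^2 + 2sm^2\int_{\mathbb{R}^N}\frac{|\hat u(\xi)|^2}{(m^2+4\pi^2|\xi|^2)^{1-s}}\,\dd\xi - 2N\int_{\mathbb{R}^N}F(u)\,\dd x - \int_{\mathbb{R}^N}f(u)u\,\dd x,\]
drop the nonnegative Bessel integral, and apply the growth estimates \eqref{fandF} together with the embedding $H^s(\mathbb{R}^N)\hookrightarrow L^q(\mathbb{R}^N)$ for $q\in[2,2^*_s]$ (controlled by $\|\cdot\|$ via its Fourier definition) to obtain $J(u)\ge [(N+1-2s)-C\varepsilon]\|u\|^2-C_\varepsilon\|u\|^p$ for some fixed $p\in(2,2^*_s)$. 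Choosing $\varepsilon$ and $\rho$ small yields $J(u)>0$ on $0<\|u\|\le\rho$. For closedness, (i) itself forces $\|u_n\|>\rho$ for every $u_n\in\mathcal{M}$, so any $H^s$-limit cannot be $0$, while continuity of $J$ (same growth plus dominated convergence) gives $J(u)=\lim J(u_n)=0$.

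For (ii), substituting $J(u)=0$ directly into Lemma \ref{Phi-cJ} yields $\Phi(u)=\Phi(u)-\frac{1}{2N+2}J(u)>0$ for every $u\in\mathcal{M}$.

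For (iii), the strategy is to realize $\mathcal{M}$ as the regular preimage $J^{-1}(0)\cap(H^s\setminus\{0\})$ and invoke the regular value theorem. I would first check $J\in C^1(H^s(\mathbb{R}^N),\mathbb{R})$ via standard Nemytskii arguments using $(f_1)$ together with \eqref{fandF}. The key step is producing, at each $u\in\mathcal{M}$, a direction $v\in H^s(\mathbb{R}^N)$ with $\langle J'(u),v\rangle\ne 0$. I would exploit the fibration $t\mapsto u_t$ used throughout the section: differentiating the identity $J(u_t)=h'_{u_t}(1)=t\,h'_u(t)$ at $t=1$ gives
\[\langle J'(u),v\rangle = h'_u(1)+h''_u(1), \qquad v:=\tfrac{d}{dt}u_t\big|_{t=1},\]
and on $\mathcal{M}$ the first term vanishes, so $\langle J'(u),v\rangle = h''_u(1)$.

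The crux is $h''_u(1)\ne 0$. Writing $h'_u(t)=t^{2N+1}[g(t)-I_1(t)-I_2(t)]$ as in \eqref{hlinhau2} and using that the bracket vanishes at $t=1$ on $\mathcal{M}$, one obtains $h''_u(1)=(g-I_1-I_2)'(1)$. The preceding uniqueness proof already established that $g$ is strictly decreasing and $I_1+I_2$ strictly increasing, so $h''_u(1)<0$, whence $J'(u)\ne 0$. The main technical hurdle I foresee is verifying that $v$ actually lies in $H^s(\mathbb{R}^N)$: in Fourier, $\hat u_t(\xi)=t^{2N+1}\hat u(t^2\xi)$ gives $\hat v(\xi)=(2N+1)\hat u(\xi)+2\xi\cdot\nabla\hat u(\xi)$, and confirming its membership in the weighted $L^2$-space defining $\|\cdot\|$ (by density from smooth compactly supported perturbations if necessary) is the most delicate calculation.
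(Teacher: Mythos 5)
Your parts (i) and (ii) follow the paper's argument essentially verbatim: the same expansion of $J$, dropping of the nonnegative Bessel integral, the estimates \eqref{fandF} with the embedding constants $\gamma_p$, the observation that $0$ is an isolated point of $J^{-1}(0)$ so $\mathcal{M}=J^{-1}(0)\setminus\{0\}$ is closed, and the substitution of $J(u)=0$ into Lemma \ref{Phi-cJ}. No issues there.

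For (iii) your route genuinely differs from the paper's, and the difficulty you flag at the end is a real gap, not just a delicate calculation. The regular value theorem requires exhibiting, at each $u\in\mathcal{M}$, a direction $v\in H^{s}(\mathbb{R}^N)$ with $\langle J'(u),v\rangle\neq 0$. Your candidate $v=\frac{\dd}{\dd t}u_t\big|_{t=1}$, with $\hat v(\xi)=(2N+1)\hat u(\xi)+2\xi\cdot\nabla\hat u(\xi)$, involves (up to lower-order terms) $x\cdot\nabla u$, and for a generic $u\in H^{s}(\mathbb{R}^N)$ this does not belong to $H^{s}(\mathbb{R}^N)$; indeed $t\mapsto u_t$ need not be differentiable as a curve in $H^{s}$. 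A density argument does not repair this, because non-degeneracy of $J'(u)$ must be witnessed by an admissible direction at that specific $u$. The paper sidesteps the problem entirely by testing against $v=u$ itself: a direct computation of $J'(u)\cdot u$, followed by substitution of $J(u)=0$, gives
\[
J'(u)\cdot u=-2N\int_{\mathbb{R}^N}\left[f(u)u-2F(u)\right]\dd x-\int_{\mathbb{R}^N}\left[f'(u)u^2-f(u)u\right]\dd x<0,
\]
where both integrands are positive by $(f_1)$ (monotonicity of $f(t)/t$ yields $f'(t)t^2>f(t)t$ and $tf(t)-2F(t)>0$ for $t\neq 0$). Your identity $\langle J'(u),v\rangle=h''_u(1)=(g-I_1-I_2)'(1)<0$ is formally the same monotonicity information, but to make your version rigorous you would either need to restrict to $u$ with $x\cdot\nabla u\in H^{s}$ or replace the fibration direction by $u$ as the paper does.
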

\begin{proof}Since there exist constants $\gamma_p$ such that $|u|_p\leq \gamma_p\|u\|$ for all $2<p<2^{*}_s$, it follows from the definition of $J$ - see \eqref{J} - and \eqref{fandF} that
\begin{align*}
J(u)&\geq (N+1-2s)\|u\|^2-2N\int_{ \mathbb{R}^N}F(u)\dd x-\int_{ \mathbb{R}^N} f(u)u\dd x\\
&\geq (N+1-2s)\|u\|^2-(2N+1)\left[\epsilon |u|^2_2+C_\epsilon |u|^2_p\right]\\
&\geq  \left[(N+1-2s)-(2N+1)\gamma^2_2\epsilon\right]\|u\|^2-(2N+1)C_\epsilon\gamma^p_p\|u\|^p\\
&=\frac{(N+1-2s)}{2}\|u\|^2-(2N+1)C_\epsilon\gamma^p_p\|u\|^p,
\end{align*}
if we choose $\epsilon=(N+1-2s)/(2(2N+1)\gamma^2_2)$. Now, taking
\[\rho=\|u\|=\left(\frac{(N+1-2s)}{p(2N+1)C_\epsilon\gamma^p_p}\right)^{1/(p-2)},\]
we obtain that
\[J(u)\geq \frac{(N+1-2s)(p-2)}{2p}\rho^2>0\]
for all $0<\|u\|\leq\rho$. Thus, $u=0$ is an isolated point of $J^{-1}(0)$ and $\mathcal{M}\subset \mathcal{M}\cup \{0\}=J^{-1}(0)$ is closed.
	
For all $u\in\mathcal{M}$ we have $J(u)=0$, so that
\[\Phi(u)=\Phi(u)-\frac{1}{2N+2}J(u)\]
and ($ii$) follows from Lemma \eqref{Phi-cJ}.
	
Since
\begin{align*}
J'(u)\cdot u&=2(N+1-2s)\|u\|^2+4sm^2\int_{ \mathbb{R}^N}\frac{|\hat{u}(\xi)|^2}{(m^2+4\pi^2|\xi|^2)^{1-s}}\dd\xi\\
&\qquad -2N\int_{ \mathbb{R}^N}f(u)u\dd x-\int_{ \mathbb{R}^N}\left[f'(u)u^2+f(u)u\right]\dd x,
\end{align*}
substitution of $J(u)=0$ in the last equation and using hypothesis ($f_1$) we obtain
\begin{align*}
J'(u)\cdot u&=-2N\int_{ \mathbb{R}^N}\left[f(u)u-2F(u)\right]\dd x-\int_{ \mathbb{R}^N}\left[f'(u)u^2-f(u)u\right]\dd x\\
&<0,
\end{align*}
as consequence of our hypotheses. We are done.	
$\hfill\Box$\end{proof}

We now define the minimax value
\[c=\inf_{\gamma\in\Gamma}\max_{0\leq t\leq 1}\Phi(\gamma(t)),\]
where
\[\Gamma=\left\{\gamma\in C([0,1],H^{s}(\mathbb{R}^N))\,:\,\gamma(0)=0\ \ \text{and}\ \ \Phi(\gamma(1))<0\right\},\]
and the infimum in the Nehari-Pohozaev manifold
\[\tilde{c}=\inf_{u\in\mathcal{M}}\Phi(u)=\inf_{u\in H^{s}}\max_{t>0}\Phi(u_t)\geq 0.\]
\begin{lemma}\label{cctilde}
The level $c$ is well-defined and $c=\tilde{c}$.
\end{lemma}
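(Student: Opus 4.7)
The plan is to split the argument into three steps using the rescaling $u_t(x)=tu(x/t^2)$ and its profile $h_u(t)=\Phi(u_t)$: first check $\Gamma\neq\emptyset$ so that $c$ is a well-defined real number; then prove $c\geq\tilde c$ by showing every admissible path must cross the Nehari--Pohozaev manifold $\mathcal{M}$; and finally obtain $c\leq\tilde c$ by exhibiting, for each $u\in\mathcal{M}$, a distinguished path whose maximum value of $\Phi$ equals $\Phi(u)$.

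For the first step, I would pick any $v\in H^s(\mathbb{R}^N)\setminus\{0\}$ and use~\eqref{hassymp} to fix $T>0$ with $h_v(T)<0$. The path $\gamma(s)=v_{sT}$ for $s\in(0,1]$, extended by $\gamma(0)=0$, should then be admissible, the only non-trivial point being continuity at $s=0$; a direct Fourier-side computation yields
\[
\|v_t\|^2 = t^{2N+2-4s}\int_{\mathbb{R}^N}\bigl(m^2 t^4+4\pi^2|\eta|^2\bigr)^s|\hat v(\eta)|^2\,\dd\eta,
\]
and since the integrand is monotonically dominated by $(m^2+4\pi^2|\eta|^2)^s|\hat v|^2\in L^1(\mathbb{R}^N)$ for $t\in[0,1]$, the prefactor $t^{2N+2-4s}$ drives $\|v_t\|\to 0$ as $t\to 0^+$. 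Hence $\gamma\in\Gamma$.

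For $c\geq\tilde c$, I would take any $\gamma\in\Gamma$ and show it crosses $\mathcal{M}$. Since $\gamma(0)=0$ and the functional $J$ is continuous on $H^s(\mathbb{R}^N)$, Lemma~\ref{manifold}(i) forces $J(\gamma(s))>0$ for all small $s>0$. On the other hand, $\Phi(\gamma(1))<0$ combined with Lemma~\ref{Phi-cJ} yields
\[
J(\gamma(1)) < (2N+2)\,\Phi(\gamma(1)) < 0.
\]
The intermediate value theorem, applied with a bit of care to ensure the zero is not the isolated root $u=0$ (here one uses that small-norm elements satisfy $J>0$, by Lemma~\ref{manifold}(i) again), then produces $s^*\in(0,1)$ with $\gamma(s^*)\in\mathcal{M}$. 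Consequently $\max_{s\in[0,1]}\Phi(\gamma(s))\geq\Phi(\gamma(s^*))\geq\tilde c$, and taking the infimum over $\gamma$ closes this direction.

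For the reverse inequality, I would fix $u\in\mathcal{M}$. The identity $J(u)=0$ together with~\eqref{J} gives $h'_u(1)=0$, and uniqueness of the critical point of $h_u$ forces $t_u=1$, whence $\Phi(u)=\max_{t>0}h_u(t)$. Choosing $T>1$ with $h_u(T)<0$, the path $\gamma_u(s)=u_{sT}$, $\gamma_u(0)=0$, lies in $\Gamma$ by the same continuity argument as in Step~I, and
\[
\max_{s\in[0,1]}\Phi(\gamma_u(s)) = \max_{\tau\in[0,T]}h_u(\tau) = h_u(t_u) = \Phi(u),
\]
because $t_u=1\in[0,T]$. Infimizing over $u\in\mathcal{M}$ yields $c\leq\tilde c$. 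The most delicate point throughout is the continuity at $s=0$ of the rescaled path (used in both the first and third steps); everything else reduces to the intermediate value theorem together with the quantitative information already packaged in Lemmas~\ref{manifold} and~\ref{Phi-cJ}.
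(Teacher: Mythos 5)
Your proof is correct and follows essentially the same route as the paper: the inequality $c\geq\tilde c$ is obtained exactly as in the text (every admissible path must cross $\mathcal{M}$, via Lemma \ref{manifold}(i), Lemma \ref{Phi-cJ} and the intermediate value theorem), and your $c\leq\tilde c$ step is just a slight repackaging of the paper's, which uses the characterization $\tilde c=\inf_{u}\max_{t>0}\Phi(u_t)$ directly, whereas you restrict to $u\in\mathcal{M}$ and note $t_u=1$ so the dilation path through $u$ has maximum exactly $\Phi(u)$. Your explicit verification of continuity of the rescaled path at $s=0$ is a point the paper leaves implicit, and is welcome.
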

\begin{proof}Taking into account \eqref{hassymp}, for all $u\in H^{s}(\mathbb{R}^N)\setminus\{0\}$ there exists $t_1=t_1(u)$ such that $\Phi(u_{t_1})<0$, where $u_t(x)=tu(x/t^2)$. Defining $\gamma_0(t)=u_{tt_1}$, if $t>0$ and $\gamma_0(0)=0$, we have
\[\Phi(\gamma_0(1))=\Phi(u_{t_1})<0,\]
from what follows that $\gamma_0\in\Gamma$.
	
Furthermore,
\[\max_{t\geq 0}\Phi(u_{tt_1})\geq\max_{0\leq t\leq q}\Phi(u_{tt_1})\geq \inf_{\gamma\in\Gamma}\max_{0\leq t\leq 1}\Phi(\gamma(t))=c,\]
proving that $\tilde{c}\geq c$.
	
But we also know the existence of $\rho>0$ such that $J(u)\geq 0$ for all $u\in H^{s}(\mathbb{R}^N)$, if $\|u\|\leq \rho$. Lemma \ref{Phi-cJ} yields 	
\[\Phi(u)\geq \frac{1}{2N+2}J(u)\geq 0,\quad\text{if }\ \|u\|\leq\rho.\]
	
If $\gamma\in \Gamma$, since $\Phi(\gamma(1))<0$, a new application of Lemma \ref{Phi-cJ} gives
\[J(\gamma(1))\leq (2N+2)\Phi(\gamma(1))<0,\]
so that $J(\gamma(0))=0$, $J(\gamma(t))>0$ if $\|\gamma(t)\|<\rho$ and $J(\gamma(1))<0$. We conclude the existence of $\tilde{t}$ such tant $J(\gamma(\tilde{t}))=0$, proving that $\gamma$ intercepts $\mathcal{M}$. Therefore,
\[\max_{0\leq t\leq 1}\Phi(\gamma(t))\geq \inf_{u\in\mathcal{M}}\Phi(u)=\tilde{c},\]
so that $c\geq \tilde{c}$ and completing the proof of $c=\tilde{c}$.	
$\hfill\Box$\end{proof}

\begin{definition}
A sequence $(u_n)\in H^{s}(\mathbb{R}^N)$ is a Cerami sequence for $\Phi$ in the level $\theta$ if
\[\Phi(u_n)\to\theta\quad\text{and}\quad \|\Phi'(u_n)\|_*\left(1+\|u_n\|\right)\to 0, \]
where $\|\cdot\|_*$ stands for the norm in $\left(H^{s}(\mathbb{R}^N)\right)^*$.
\end{definition}

\begin{lemma}\label{Cbounded}
Let $(u_n)$ be a Cerami sequence for $\Phi$ at the level $\theta>0$. Then, passing to a subsequence if necessary, $(u_n)$ is bounded in $H^{s}(\mathbb{R}^N)$.
\end{lemma}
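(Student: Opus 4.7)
The plan is a contradiction argument. Assume $\|u_n\|\to\infty$ along a subsequence and set $v_n:=u_n/\|u_n\|$, so $\|v_n\|=1$ and, along a further subsequence, $v_n\rightharpoonup v$ weakly in $H^{s}(\mathbb{R}^N)$. The Cerami condition gives $\Phi'(u_n)\cdot u_n=o(1)$, so combining with $\Phi(u_n)\to\theta$ one obtains
\[
\int_{\mathbb{R}^N}\left[\tfrac{1}{2}f(u_n)u_n-F(u_n)\right]\dd x=\Phi(u_n)-\tfrac{1}{2}\Phi'(u_n)\cdot u_n=\theta+o(1),
\]
and by hypothesis $(f_1)$ the integrand is pointwise nonnegative. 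If $v\not\equiv 0$, then on the positive-measure set $\{v\neq 0\}$ one has $|u_n(x)|=\|u_n\||v_n(x)|\to\infty$ for a.e.\ such $x$ along a suitable subsequence, so the non-quadraticity condition $(f_3)$ forces this integrand to blow up pointwise there and Fatou's lemma contradicts boundedness of the integral.

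Now suppose $v\equiv 0$. I would then dichotomize via a Lions-type concentration-compactness argument. In the non-vanishing alternative there exist $\delta>0$ and $y_n\in\mathbb{R}^N$ with $\int_{B_1(y_n)}|v_n|^2\dd x\geq\delta$; translation invariance of $\Phi$ implies that the shifted sequence $\tilde u_n:=u_n(\cdot+y_n)$ is still a Cerami sequence at level $\theta$, and the compact embedding $H^{s}(B_1)\hookrightarrow L^2(B_1)$ makes the weak limit of $\tilde v_n:=v_n(\cdot+y_n)$ nonzero, reducing us to the previous paragraph.

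The hard subcase is vanishing: $\sup_{y\in\mathbb{R}^N}\int_{B_1(y)}|v_n|^2\dd x\to 0$. Here the Lions lemma yields $v_n\to 0$ in $L^p(\mathbb{R}^N)$ for every $p\in(2,2^{*}_s)$. Using $|F(t)|\leq\epsilon t^2+C_\epsilon|t|^p$ from \eqref{fandF} and the uniform $L^2$-bound $\int v_n^2\dd x\leq m^{-2s}$ inherited from $\|v_n\|=1$, for each fixed $R>0$ and $\epsilon$ sufficiently small,
\[
\Phi(Rv_n)=\tfrac{R^2}{2}-\int_{\mathbb{R}^N}F(Rv_n)\dd x\geq\tfrac{R^2}{2}-\epsilon R^2 m^{-2s}-C_\epsilon R^p\!\int_{\mathbb{R}^N}|v_n|^p\dd x\geq\tfrac{R^2}{4}-o(1).
\]
Since $Rv_n=(R/\|u_n\|)u_n$ with $R/\|u_n\|\in(0,1)$ for $n$ large, picking $R$ with $R^2/4>\theta+1$ and letting $\tau_n\in[0,1]$ realize $\max_{\tau\in[0,1]}\Phi(\tau u_n)$ forces $\tau_n\in(0,1)$, hence $\Phi'(\tau_n u_n)\cdot u_n=0$.

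To close, I would invoke Lemma \ref{lt2f-F} with $u=u_n$ and $t=\tau_n\in(0,1)$ to get $\tfrac{\tau_n^2}{2}f(u_n)u_n-F(\tau_n u_n)\leq\tfrac{1}{2}f(u_n)u_n-F(u_n)$. Integrating and combining with $\int f(u_n)u_n\dd x=\|u_n\|^2+o(1)$ (from the Cerami property) yields $\Phi(\tau_n u_n)\leq\Phi(u_n)+o(1)=\theta+o(1)$, which contradicts $\Phi(\tau_n u_n)\geq R^2/4>\theta+1$ for $n$ large. The main obstacle throughout is this vanishing subcase, which requires the Jeanjean-type monotonicity trick where Lemma \ref{lt2f-F} is essential; the Pohozaev component $P$ of $J$ plays no role in the boundedness argument itself.
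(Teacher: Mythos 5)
Your argument is correct and follows essentially the same route as the paper: the rescaled sequence $v_n=u_n/\|u_n\|$, the Lions vanishing/non-vanishing dichotomy, Fatou's lemma with $(f_3)$ in the non-vanishing case, and the key inequality $\Phi(tu_n)\le\Phi(u_n)+o(1)$ obtained from Lemma \ref{lt2f-F} plus the Cerami condition in the vanishing case. The only (immaterial) difference is your choice of a fixed dilation $R$ with $R^2/4>\theta+1$ where the paper takes $t_n=2\sqrt{\theta}/\|u_n\|$; the detour through the maximizer $\tau_n$ is unnecessary but harmless.
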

\begin{proof}Since $\|\Phi'(u_n)\|_*(1+\|u_n\|)\to 0$, we have
\[\|\Phi'(u_n)\|_*\leq \|\Phi'(u_n)\|_*(1+\|u_n\|)\leq \frac{1}{n}\]
for $n$ big enough. Thus, we can suppose that
\begin{align}\label{Phi'ineq}
-\frac{1}{n}<\Phi'(u_n)\cdot u_n=\|u_n\|^2-\int_{\mathbb{R}^n}f(u_n)u_n\dd x<\frac{1}{n}.
\end{align}
	
The last inequality and Lemma \ref{lt2f-F} imply that
\begin{align}\label{ineqPhi}
\Phi(tu_n)&=\frac{t^2}{2}\|u_n\|^2-\int_{\mathbb{R}^N}F(tu_n)\dd x \nonumber\\
&\leq \frac{t^2}{2}\left[\frac{1}{n}+\int_{\mathbb{R}^N}f(u_n)u_n\dd x\right]-\int_{\mathbb{R}^N}F(tu_n)\dd x\\
&\leq \frac{t^2}{2n}+\int_{\mathbb{R}^N}\left[\frac{1}{2}f(u_n)u_n-F(u_n)\right]\dd x.\nonumber
\end{align}
	
But it also follows from \eqref{Phi'ineq} that
\begin{align}\label{Phiineq}
\Phi(u_n)&=\frac{1}{2}\|u_n\|^2-\int_{\mathbb{R}^n}F(u_n)\dd x\nonumber\\
&\geq -\frac{1}{2n}+\int_{\mathbb{R}^n}\left[\frac{1}{2}f(u_n)u_n-F(u_n)\right]\dd x.
\end{align}
	
Thus, it follows from \eqref{ineqPhi} and \eqref{Phiineq},
\begin{align}\label{Phitun}
\Phi(tu_n)\leq \frac{t^2}{2n}+\left[\frac{1}{2n}+\Phi(u_n)\right].
\end{align}
	
Since $\Phi(u_n)=\theta+O_n(1)$, \eqref{Phitun} yields
\begin{equation}\label{ineqPhitun}
\frac{t^2}{2}\|u_n\|^2\leq \frac{t^2}{2n}+\frac{1}{2n}+\theta+O_n(1)+\int_{\mathbb{R}^N}F(tu_n)\dd x.
\end{equation}
	
Taking $t_n=2\sqrt{\theta}/\|u_n\|$ and substituting $t_n$ into \eqref{ineqPhitun}, yields
\begin{align*}
2\theta&\leq \frac{4\theta}{n\|u_n\|^2}+O_n(1)+\theta+\int_{\mathbb{R}^N}F(tu_n)\dd x\nonumber\end{align*}
and so
\begin{align}\label{Lions1}\theta &\leq \frac{4\theta}{n\|u_n\|^2}+ O_n(1)+4\epsilon\theta\int_{\mathbb{R}^N}\left(\frac{u_n}{|u_n|}\right)^2\dd x+C_\epsilon(2\sqrt{\theta})^p\int_{\mathbb{R}^N}\left(\frac{u_n}{|u_n|}\right)^p\dd x.
\end{align}
	
Now, by contradiction, suppose that $\|u_n\|\to\infty$ for a subsequence and consider the bounded sequence
\[\tilde{u}_n=\frac{u_n}{\|u_n\|}.\]
	
Since $H^{s}(\mathbb{R}^N)$ is reflexive, passing to a subsequence if necessary, we can suppose $\tilde{u}_n\rightharpoonup \tilde{u}$ for some $\tilde{u}\in H^{s}(\mathbb{R}^N)$. There are two possible cases:
\begin{enumerate}
\item [$(i)$] $\displaystyle\limsup_{n\to\infty}\sup_{y\in\mathbb{R}^N}\int_{B_1(y)}|\tilde{u}_n|^2\dd x=0$;
\item [$(ii)$] $\displaystyle\limsup_{n\to\infty}\sup_{y\in\mathbb{R}^N}\int_{B_1(y)}|\tilde{u}_n|^2\dd x>0$.
\end{enumerate}
	
If case ($i$) occurs, then $\tilde{u}_n\to 0$ in $L^p(\mathbb{R}^N)$, if $2<p<2^{*}_s$ by a well-known result by Lions (see \cite[Lemma 1.21]{Willem}).
Thus, it follows from \eqref{Lions1} that
\begin{align}\theta&\leq O_n(1)+4\epsilon\theta\int_{ \mathbb{R}^N}|\tilde{u}_n|^2\dd x\leq O_n(1)+\frac{4\epsilon\theta}{m^{2s}}\int_{ \mathbb{R}^N}(m^2+4\pi^2|\xi|^2)^{s}|\,\mathcal{F}(\tilde{u}_n)|^2\dd \xi\nonumber\\
&\leq O_n(1)+\frac{4\epsilon\theta}{m}\|\tilde{u}_n\|^2=O_n(1)+\frac{4\epsilon\theta}{m^{2s}}
\end{align}
and we have reached a contradiction by taking $\epsilon=m^{2s}/8$.
	
Now suppose that  case ($ii$) occurs. If $\delta=\displaystyle\limsup_{n\to\infty}\sup_{y\in\mathbb{R}^N}\int_{B_1(y)}|\tilde{u}_n|^2\dd x>0$, passing to a subsequence if necessary, we have
\[\int_{B_1(y)}|\tilde{u}_n|^2\dd x>\frac{\delta}{2}.\]
Therefore, there exists a sequence $(y_n)$ such that, for all $n\in\mathbb{N}$,
\[\int_{B_1(y_n)}|\tilde{u}_n|^2\dd x>\frac{\delta}{2}>0.\]
	
We define
\[\tilde{v}_n(x)=\tilde{u}_n(x+y_n).\]
Since $\tilde{v}_n$ is a translation of $\tilde{u}_n$, we have $\|\tilde{v}_n\|=1$. Thus, passing to a subsequence we can suppose that
\[\tilde{v}_n\rightharpoonup v\ \ \text{in }\ H^{s}(\mathbb{R}^N),\quad \tilde{v}_n\to \tilde{v}\ \ \text{in }\ L^2_{loc}(\mathbb{R}^N)\quad \text{and}\quad \tilde{v}_n(x)\to \tilde{v}(x)\ \ \text{a.e. in }\ \mathbb{R}^N.\]
	
We now consider two cases: $(y_n)$ unbounded and $(y_n)$ bounded. In the first case, since
\[\frac{\delta}{2}<\int_{\bar{B}_1(y_n)}|\tilde{u}_n|^2\dd x=\int_{\bar{B}_1(0)}|\tilde{v}_n|^2\dd x\to \int_{\bar{B}_1(0)}|\tilde{v}|^2\dd x,\]
we conclude that $\tilde{v}\neq 0$.  Thus, there exists $\Omega\subset B_1(0)$, such that $|\tilde{v}(x)|>0$ for all $x\in \Omega$, with $\Omega$ satisfying $|\Omega|>0$. (Observe that $\Omega$ does not depend on $n$.)
	
In the second case, suppose that $|y_n|\leq R$ for all $n$. We can suppose that $R>1$. Thus,
\[\frac{\delta}{2}<\int_{B_1(0)}|\tilde{u}_n(x+y_n)|^2\dd x\leq \int_{B_{2R}(0)}|\tilde{u}_n(x+y_n)|^2\dd x\to \int_{B_{2R}(0)}|\tilde{v}(x)|^2\dd x,\]
since $\tilde{v}_n\to \tilde{v}$ in $L^2(B_{2R}(0))$. So, as before, we conclude the existence of $\Omega\subset B_{2R}(0)$ such that $\tilde{v}>0$ in $\Omega$.
	
In both cases, seeing that
\[0<|\tilde{v}(x)|=\lim_{n\to\infty}\frac{|u_n(x+y_n)|}{\|u_n\|},\ \ \forall\ x\in\Omega,\]
we conclude that
\[\lim_{n\to\infty}|u_n(x+y_n)|=\infty,\ \ \text{if }\ x\in\Omega.\]
	
By applying hypothesis ($f_3$) and Fatou's lemma, we have
\begin{multline*}\liminf_{n\to\infty}\int_{ \mathbb{R}^N}\left[(1/2)f(u_n(x+y_n))u_n(x+y_n)-F(u_n(x+y_n))\right]\dd x\\
\geq \liminf_{n\to\infty}\int_{ \Omega}\left[(1/2)f(u_n(x+y_n))u_n(x+y_n)-F(u_n(x+y_n))\right]\dd x=\infty.\end{multline*}
	
But
\[\int_{ \mathbb{R}^N}\left[(1/2)f(u_n)u_n-F(u_n)\right]\dd x=\Phi(u_n)-\frac{1}{2}\Phi'(u_n)= \theta+O_n(1)\]
and once again we reached a contradiction, and we are done.
$\hfill\Box$\end{proof}

The existence of a Cerami sequence for $\Phi$ at the level $c$ is a consequence of the Ghoussoub-Preiss theorem, that we now recall, for the convenience of the reader, using our notation. A good exposition of this result can be found in one of Ekeland's books, see \cite[Theorem 6, p. 140]{Ekeland}, see also \cite{Ghoussoub}. 
\begin{theorem}[Ghoussoub-Preiss]Let $X$ be a Banach space and $\Phi\colon X\to \mathbb{R}$ a continuous, Gateaux-differentiable function, such that $\Phi'\colon X\to X$ is continuous from the norm topology of $X$ to the weak${^*}$ topology of $X^*$. Take two points $z_0, z_1$ in $X$ and consider the set $\Gamma$ of all continuous paths from $z_0$ to $z_1$:
\[\Gamma=\left\{\gamma\in C([0,1],X)\,:\, \gamma(0)=z_0,\ \gamma(1)=z_1\right\}.\]
	
Define a number $c$ by
\[c:=\inf_{\gamma\in\Gamma}\max_{0\leq t\leq 1}\Phi(\gamma(t)).\]
	
Assume that there is a closed subset $\mathcal{M}$ of $X$ such that
\[\mathcal{M}\cap \Phi_c\ \emph{ separates }\ z_0\ \ \text{and }\ z_1,\]
with $\Phi_c=\{x\in X\,:\, \Phi(x)\geq c\}$.
	
Then, there exists a sequence $(x_n)$ in $X$ such that
\begin{enumerate}
\item [$(i)$] $\textup{dist}\,(x_n,\mathcal{M})\to 0$;
\item [$(ii)$] $\Phi(x_n)\to c$;
\item [$(iii)$] $(1+\|x_n\|)\|\Phi'(x_n)\|_*\to 0$.
\end{enumerate}
\end{theorem}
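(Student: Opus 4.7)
The plan is to derive the theorem by applying Ekeland's variational principle on the space of continuous paths and coupling it with a deformation lemma tailored to the Cerami-type gradient bound. I endow $\Gamma$ with the uniform metric $d(\gamma_1,\gamma_2)=\sup_{t\in[0,1]}\|\gamma_1(t)-\gamma_2(t)\|$, making it a complete metric space, and consider the continuous functional $J(\gamma):=\max_{t\in[0,1]}\Phi(\gamma(t))$, which by definition is bounded below by $c$. Given $\varepsilon>0$, I pick an almost-minimizing $\gamma_0\in\Gamma$ with $J(\gamma_0)<c+\varepsilon^2$ and apply Ekeland's variational principle to extract $\gamma_\varepsilon\in\Gamma$ such that $J(\gamma_\varepsilon)\leq J(\gamma_0)$, $d(\gamma_\varepsilon,\gamma_0)\leq\varepsilon$, and
\[
J(\gamma)\geq J(\gamma_\varepsilon)-\varepsilon\,d(\gamma,\gamma_\varepsilon)\qquad\text{for every }\gamma\in\Gamma.
\]

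Next, introduce the near-maximum set $K_\varepsilon:=\gamma_\varepsilon\bigl(\{t\in[0,1]:\Phi(\gamma_\varepsilon(t))\geq c-\varepsilon\}\bigr)$. The separation hypothesis forces $\gamma_\varepsilon$ to cross $\mathcal{M}\cap\Phi_c$, so $K_\varepsilon$ meets $\mathcal{M}$; I would then show that $K_\varepsilon$ must contain a point within distance $\sqrt{\varepsilon}$ of $\mathcal{M}$ at which $(1+\|x\|)\|\Phi'(x)\|_*\leq\sqrt{\varepsilon}$. The proof is by contradiction: if every $x\in K_\varepsilon$ with $\mathrm{dist}(x,\mathcal{M})\leq\sqrt{\varepsilon}$ satisfied $(1+\|x\|)\|\Phi'(x)\|_*\geq\sqrt{\varepsilon}$, I would construct a locally Lipschitz pseudo-gradient field $V$ with $\|V(x)\|\leq 1+\|x\|$ and $\langle\Phi'(x),V(x)\rangle\geq\tfrac{1}{2}(1+\|x\|)\|\Phi'(x)\|_*$, and then, after multiplying by cut-offs supported near $K_\varepsilon$ and vanishing at the endpoints $z_0,z_1$ and on the complement of a suitable neighborhood, flow $\gamma_\varepsilon$ along $-V$ for a short time to obtain $\tilde\gamma\in\Gamma$ with $d(\tilde\gamma,\gamma_\varepsilon)\leq\varepsilon/2$ and $J(\tilde\gamma)<J(\gamma_\varepsilon)-\varepsilon\,d(\tilde\gamma,\gamma_\varepsilon)$, contradicting Ekeland's inequality. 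Choosing $\varepsilon=1/n$ and extracting $x_n\in K_{1/n}$ then yields the required sequence satisfying (i)--(iii).

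The main obstacle is the simultaneous control built into the deformation step: the flow must decrease $\Phi$ on the near-maximum set without pushing the path so far that the Ekeland estimate fails, and the weight $(1+\|x\|)$ in conclusion (iii) forces the pseudo-gradient to be constructed with respect to the conformally weighted Finsler norm $\|v\|_x:=\|v\|/(1+\|x\|)$, as in Cerami's original argument. Designing the cut-offs so that they interact cleanly with both this norm weighting and the localization near $\mathcal{M}$, while preserving $\gamma(0)=z_0$ and $\gamma(1)=z_1$ and ensuring the perturbed path remains in $\Gamma$, is the delicate technical point; this is precisely the refinement of the classical mountain-pass construction that distinguishes the Ghoussoub--Preiss theorem, and the only step where the structural assumption that $\Phi'$ is norm-to-weak$^*$ continuous (needed for integrating the pseudo-gradient flow) enters in an essential way.
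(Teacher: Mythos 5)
The paper does not prove this theorem at all: it is quoted verbatim from the literature, with the reader referred to Ekeland's book \cite[Theorem 6, p.~140]{Ekeland} and to \cite{Ghoussoub}. So there is no in-paper proof to compare against; your outline follows exactly the route of those references (Ekeland's variational principle on the complete metric space of paths, followed by a pseudo-gradient deformation adapted to the Cerami weight $(1+\|x\|)$), and your identification of where the norm-to-weak$^*$ continuity of $\Phi'$ enters is correct.

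As a proof, however, the sketch has a genuine gap at precisely the step you flag as ``delicate,'' and it is not merely a matter of bookkeeping with cut-offs. Your contradiction hypothesis gives a lower bound $(1+\|x\|)\|\Phi'(x)\|_*\geq\sqrt{\varepsilon}$ only at points of $K_\varepsilon$ lying within $\sqrt{\varepsilon}$ of $\mathcal{M}$. The pseudo-gradient flow therefore decreases $\Phi$ only near $\mathcal{M}$, while the maximum of $\Phi\circ\tilde\gamma$ over $[0,1]$ may perfectly well be attained at a near-maximum point of $\gamma_\varepsilon$ far from $\mathcal{M}$, where nothing has been gained; the inequality $J(\tilde\gamma)<J(\gamma_\varepsilon)-\varepsilon\, d(\tilde\gamma,\gamma_\varepsilon)$ then fails and no contradiction with Ekeland's estimate is obtained. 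This is exactly the point the separation hypothesis is designed to handle, and it must be used a second time: either one perturbs the functional before applying Ekeland, replacing $J$ by $\gamma\mapsto\max_t\bigl[\Phi(\gamma(t))+\varepsilon\,\psi(\operatorname{dist}(\gamma(t),\mathcal{M}\cap\Phi_c))\bigr]$ so that the near-maximum points are forced toward $\mathcal{M}\cap\Phi_c$ (this is Ekeland's device, and it is also what delivers conclusion $(i)$ cleanly), or one argues that the deformed path $\tilde\gamma$, still belonging to $\Gamma$, must again cross the separating set $\mathcal{M}\cap\Phi_c$, and that any such crossing point is close to the region where the decrease was achieved. Without one of these two mechanisms the argument as written does not close, and conclusion $(i)$ in particular is not established by what you wrote. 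I would also note that the existence of a locally Lipschitz pseudo-gradient for a merely Gateaux-differentiable $\Phi$ with norm-to-weak$^*$ continuous derivative is itself a lemma that needs to be invoked explicitly rather than assumed.
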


In the original Ghoussoub-Preiss theorem, we have $\delta(x_n,\mathcal{M})\to 0$, where $\delta$ stands for the geodesic distance. If $x_0=0$, then $\delta(0,x)=\ln (1+\|x\|)$ and we can change $\delta$ by dist, see \cite[p. 138]{Ekeland}. A closed subset $\mathcal{F}\subset X$ \emph{separates} two points $z_0$ and $z_1$ in $X$ if $z_0$ and $z_1$ belong to disjoint connected components in $X\setminus\mathcal{F}$

Of course, in our case $X=H^{s}(\mathbb{R}^N)$.  If we take $z_0=0$ and $z_1$ such that $\Phi(z_1)<0$, then \[H^{s}(\mathbb{R}^N)\setminus\mathcal{M}=\{0\}\cup\{u\in H^{s}(\mathbb{R}^N):J(u)>0\}\cup\{u\in H^{s}(\mathbb{R}^N):J(u)<0\}\] (remember that $0\notin \mathcal{M}$ and $J(0)=0$).  According to Lemma \ref{manifold}, $B_\rho(0)$ belongs to a connected component of $\{0\}\cup \{u\in H^{s}(\mathbb{R}^N):J(u)>0\}$. Since $\Phi(z_1)<0$, it follows from Lemma \ref{Phi-cJ} that $J(z_1)<0$. Thus, $\mathcal{M}$ separates $z_0$ and $z_1$. But we also have $\mathcal{M}\cap \Phi_c=\mathcal{M}$, since $ \displaystyle\inf_{u\in\mathcal M}\Phi(u)=c$, as consequence of Lemma \ref{cctilde}. So, the assumptions of the Ghoussoub-Preiss theorem are fulfilled.

\textbf{Proof of Theorem \ref{texistence}} Since the Ghoussoub-Preiss theorem guarantees the existence of a Cerami sequence $\{u_n\} \subset H^{s}(\mathbb{R}^N)$, which is bounded by Lemma \ref{Cbounded}, we can suppose that
\[u_n\rightharpoonup u\ \text{ in }\ H^{s}(\mathbb{R}^N),\quad u_n(x)\to u(x)\ \text{a.e. and } u_n\to u\ \text{in }\ L^p_{loc}(\mathbb{R}^N)\]
for $p\in [2,2^{*}_s)$.

We define, mimicking the proof of Lemma \ref{Cbounded},
\[\delta=\limsup_{n\to\infty}\sup_{y\in\mathbb{R}^N}\int_{B_1(y)}|u_n|^2\dd x.\]

If $\delta=0$, by the principle of concentration-compactness of Lions we have $u_n\to 0$ in $L^p(\mathbb{R}^N)$ for any $p\in (2,2^{*}_s)$. Since $(u_n)$ is bounded in $L^2(\mathbb{R}^N)$, there exists $M_0>0$ such that $|u_n|_2\leq M$ for all $n\in\mathbb{N}$. Thus, for any $\eta>0$, by taking $\epsilon=\eta/M_0$, we conclude that
\[\int_{ \mathbb{R}^N}|F(u_n)|\dd x\leq \epsilon |u_n|^2_2+C_\epsilon |u_n|^p_p=\eta+C_\epsilon |u_n|^p_p\]
and $|u_n|_p\to 0$ implies that
\[\int_{ \mathbb{R}^N}F(u_n)\dd x\to 0,\ \text{when }\ n\to\infty.\]

Similarly,
\[\int_{ \mathbb{R}^N}f(u_n)u_n\dd x\to 0,\ \text{ when }\ n\to \infty,\]
and, since $\|u_n\|^2-\int_{ \mathbb{R}^N}f(u_n)u_n=\Phi'(u_n)\cdot u_n\to 0$ if $n\to \infty$, we have $\|u_n\|^2\to 0$.

Thus,
\[0<\tilde{c}=\lim_{n\to\infty}\Phi(u_n)=\lim_{n\to\infty}\left[\frac{1}{2}\|u_n\|^2-\int_{ \mathbb{R}^N}F(u_n)\dd x\right]=0,\]
a contradiction.

If, however, $\delta>0$, there exists a sequence $(y_n)$ such that, for all $n\in\mathbb{N}$,
\begin{align}\label{wneq0}\int_{B_1(y_n)}|\tilde{u}_n|^2\dd x>\frac{\delta}{2}>0.\end{align}

We define $w_n=u_n(x+y_n)$. Then $\|w_n\|=\|u_n\|$, $J(w_n)=J(u_n)$, $\Phi(w_n)=\Phi(u_n)$ and $\Phi'(w_n)\to 0$ when $n\to\infty$. Passing to a subsequence we can suppose that, for $p\in [2,2^{*}_s)$, we have
\[w_n\rightharpoonup w\ \ \text{in }\ H^{s}(\mathbb{R}^N),\quad w_n\to w\ \ \text{in }\ L^p_{loc}(\mathbb{R}^N)\quad \text{and}\quad w_n(x)\to w(x)\ \ \text{a.e. in }\ \mathbb{R}^N.\]

From \eqref{wneq0} follows $w\neq 0$. Furthermore, for all $\varphi\in H^{s}(\mathbb{R}^N)$, we have
\begin{align*}\Phi'(w)\cdot\varphi&=\lim_{n\to\infty}\left[\int_{\mathbb{R}^n}(-\Delta+m^2)^{s/2}w_n(-\Delta+m^2)^{s/2}\varphi\dd x-\int_{ \mathbb{R}^N}f(w_n)\varphi\dd x\right]\\
&=\lim_{n\to\infty}\Phi'(w_n)\cdot\varphi=0,\end{align*}
and we conclude that
\[\Phi'(w)=0,\]
that is, $w$ is a weak solution of \eqref{prn} and, therefore,
satisfies the Pohazaev identity. But
\[J(w)=\Phi'(w)\cdot w+2P(w)=0\]
proves that $w\in\mathcal{M}$. Therefore, $\Phi(w)\geq \tilde{c}$, as consequence of Lemma \ref{cctilde}.

However, by applying Fatou's Lemma to \eqref{eqPhi-cJ} with $w$ instead of $u$, we obtain
\begin{align*}
\Phi(w)&=\Phi(w)-\frac{1}{2N+2}J(w)\\
&=\frac{2s}{2N+2}\left[\int_{ \mathbb{R}^N}\frac{4\pi^2|\xi|^2|\hat{w}(\xi)|^2}{(m^2+4\pi^2|\xi|^2)^{1-s}}\dd\xi+\int_{ \mathbb{R}^N}\left[f(w)w-2F(w)\right]\dd x\right]\\
&\leq \liminf_{n\to\infty}\frac{2s}{2N+2}\left[\int_{ \mathbb{R}^N}\frac{4\pi^2|\xi|^2|\hat{w_n}(\xi)|^2}{(m^2+4\pi^2|\xi|^2)^{1-s}}\right.\dd\xi\\
&\qquad\left.+\int_{ \mathbb{R}^N}\left[f(w_n)w_n-2F(w_n)\right]\dd x\right]\\
&=\liminf_{n\to\infty}\left[\Phi(w_n)-\frac{1}{2N+2}J(w_n)\right]=\liminf_{n\to\infty}\Phi(w_n)=\tilde{c}.
\end{align*}
Thus, we have $\Phi(w)\leq\tilde{c}$ and conclude that $\Phi(w)=\tilde{c}$. We are done.
$\hfill\Box$

\section{Radial Symmetry}\label{Symmetry}
We commence this section presenting some basic results about a modified Bessel kernel, defined for any $s>0$ by
\begin{equation}\label{Besselkernel}
g_s(x)=\frac{1}{(4\pi)^s\Gamma(s)}\int_{0}^\infty e^{-\pi|x|^2/\delta}e^{-m^2\delta/(4\pi)}\delta^{(2s-N)/2}\frac{\dd\delta}{\delta}.
\end{equation}

The results follow simply by adapting the proofs presented in \cite{Mizuta} or \cite{Stein}.
\begin{proposition}\label{prop1}For every $s>0$ we have
\begin{enumerate}
\item [$(i)$] $g_s\in L^1(\mathbb{R}^N)$;
\item [$(ii)$] $\hat{g}_s(\xi)=(m^2+4\pi^2|\xi|^2)^{-s}$. 
\end{enumerate}
\end{proposition}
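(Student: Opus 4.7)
The plan is to reduce both claims to two standard identities: the Gaussian integral $\int_{\mathbb{R}^N} e^{-\pi|x|^2/\delta}\dd x = \delta^{N/2}$ and the Gamma integral $\int_0^\infty e^{-a\delta}\delta^{s-1}\dd\delta = \Gamma(s)a^{-s}$. Part (i) will follow by Tonelli (everything is non-negative), and part (ii) will then follow from Fubini applied to the Fourier integral, using the integrability just established.

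For (i), I would interchange the order of integration to write
\[
\int_{\mathbb{R}^N} g_s(x)\dd x = \frac{1}{(4\pi)^s\Gamma(s)}\int_0^\infty e^{-m^2\delta/(4\pi)}\delta^{(2s-N)/2-1}\left(\int_{\mathbb{R}^N}e^{-\pi|x|^2/\delta}\dd x\right)\dd\delta.
\]
The inner Gaussian integral equals $\delta^{N/2}$ and cancels the $\delta^{-N/2}$ factor, leaving $\int_0^\infty e^{-m^2\delta/(4\pi)}\delta^{s-1}\dd\delta$. A single substitution $u=m^2\delta/(4\pi)$ converts this into $\Gamma(s)(4\pi/m^2)^s$, and after simplification I obtain $\int_{\mathbb{R}^N} g_s(x)\dd x = m^{-2s}$, proving $g_s\in L^1(\mathbb{R}^N)$.

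For (ii), the $L^1$-bound from (i) justifies Fubini in
\[
\hat g_s(\xi) = \frac{1}{(4\pi)^s\Gamma(s)}\int_0^\infty e^{-m^2\delta/(4\pi)}\delta^{(2s-N)/2-1}\left(\int_{\mathbb{R}^N}e^{-\pi|x|^2/\delta}e^{-2\pi ix\cdot\xi}\dd x\right)\dd\delta.
\]
The inner integral is the Fourier transform of a Gaussian, equal to $\delta^{N/2}e^{-\pi\delta|\xi|^2}$. Setting $a=m^2/(4\pi)+\pi|\xi|^2=(m^2+4\pi^2|\xi|^2)/(4\pi)$, the outer integral becomes $\int_0^\infty e^{-a\delta}\delta^{s-1}\dd\delta=\Gamma(s)a^{-s}$, and the prefactor $(4\pi)^{-s}\Gamma(s)^{-1}$ combines with $a^{-s}$ to give precisely $(m^2+4\pi^2|\xi|^2)^{-s}$.

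There is no real obstacle: both parts are bookkeeping around Fubini/Tonelli and two classical integrals. The only point that requires a little care is the arithmetic that shows $(4\pi)^{-s}\cdot(4\pi/(m^2+4\pi^2|\xi|^2))^s$ simplifies to $(m^2+4\pi^2|\xi|^2)^{-s}$, but this is immediate. I would present part (i) first so that Fubini in part (ii) requires no separate justification.
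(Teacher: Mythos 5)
Your proposal is correct and follows exactly the route the paper indicates: the paper's own proof is the one-line remark that the result follows from the Gaussian identity $\int_{\mathbb{R}^N}e^{-\pi|x|^2/\delta}\dd x=\delta^{N/2}$ together with Fubini's theorem, and your argument simply carries out that computation in full, correctly obtaining $|g_s|_{L^1}=m^{-2s}$ (consistent with the bound $|I_s(f)|_p\le m^{-2s}|f|_p$ stated later in the paper) and the claimed Fourier transform via the Gamma integral.
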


The proof of Proposition \ref{prop1} is a consequence of the identity 
\[\int_{ \mathbb{R}^N}e^{-\pi|x|^2/\delta}\dd x=\delta^{N/2},\]
and the application of Fubini's theorem.

The next result follows immediately by considering the Fourier transform of $g_{s_1+s_2}$, applying Proposition \ref{prop1} and then the inversion formula. 
\begin{corollary}\label{cor}
For every $s_1,s_2>0$ it holds
\[g_{s_1}*g_{s_2}=g_{s_1+s_2}.\]
\end{corollary}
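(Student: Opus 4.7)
The plan is to prove the corollary by passing to Fourier side, where the identity becomes a trivial exponent addition, and then invoke Fourier inversion. Concretely, I would take the Fourier transform of $g_{s_1}*g_{s_2}$, apply the convolution theorem, use Proposition \ref{prop1}(ii) to convert each factor into a power of $(m^2+4\pi^2|\xi|^2)$, collect exponents, and finally recognize the result as $\hat{g}_{s_1+s_2}$.

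In more detail: first I would note that by Proposition \ref{prop1}(i), both $g_{s_1}$ and $g_{s_2}$ lie in $L^1(\mathbb{R}^N)$, so their convolution is well-defined and again in $L^1(\mathbb{R}^N)$, and the convolution theorem applies to give
\[
\mathcal{F}(g_{s_1}*g_{s_2})(\xi)=\hat{g}_{s_1}(\xi)\,\hat{g}_{s_2}(\xi).
\]
Then by Proposition \ref{prop1}(ii),
\[
\hat{g}_{s_1}(\xi)\,\hat{g}_{s_2}(\xi)=(m^2+4\pi^2|\xi|^2)^{-s_1}(m^2+4\pi^2|\xi|^2)^{-s_2}=(m^2+4\pi^2|\xi|^2)^{-(s_1+s_2)}=\hat{g}_{s_1+s_2}(\xi).
\]

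Finally, applying $\mathcal{F}^{-1}$ to both sides yields $g_{s_1}*g_{s_2}=g_{s_1+s_2}$ almost everywhere. The only mild point to check is the justification of Fourier inversion; since $g_{s_1+s_2}\in L^1(\mathbb{R}^N)$ by Proposition \ref{prop1}(i) and its Fourier transform $(m^2+4\pi^2|\xi|^2)^{-(s_1+s_2)}$ also belongs to $L^1(\mathbb{R}^N)$ whenever $2(s_1+s_2)>N$ (and in general one can argue via $\mathcal{S}'$ since $g_s$ is continuous away from the origin and both functions are tempered), the inversion formula applies and the equality holds pointwise. There is really no substantive obstacle here; the whole content is packaged in Proposition \ref{prop1}.
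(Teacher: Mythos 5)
Your proposal is correct and follows essentially the same route as the paper, which likewise deduces the identity from Proposition \ref{prop1} via Fourier transforms and the inversion formula. The only cosmetic remark is that the final step is most cleanly justified by uniqueness of the Fourier transform on $L^1(\mathbb{R}^N)$ (two integrable functions with equal transforms agree a.e.), which sidesteps your worry about integrability of $(m^2+4\pi^2|\xi|^2)^{-(s_1+s_2)}$.
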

\begin{definition}
For a given $f\in L^p(\mathbb{R}^N)$ with $1\leq p\leq \infty$, we define the Bessel potential $I_s(f)$ by
\[I_s(f)=\left\{\begin{array}{ll}
g_s*f, &\text{if }\, s>0\\
f, &\text{if }\, s=0\end{array}\right.\]
\end{definition}

The next result is a consequence of Corollary \ref{cor}:
\begin{proposition}We have
\begin{enumerate}
\item [$(i)$] For any $f\in L^p(\mathbb{R}^N)$ with $1\leq p\leq\infty$,
\[I_s(f)\in L^p(\mathbb{R}^N)\quad\text{and}\quad |I_s(f)|_{p}\leq \frac{1}{m^{2s}}|f|_p;\]
\item [$(ii)$] $I_{s_1}\circ I_{s_2}=I_{s_1+s_2}$.
\end{enumerate}
\end{proposition}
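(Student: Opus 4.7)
The plan is to derive both items directly from the convolution structure of the Bessel potential, using only Young's convolution inequality and the semigroup identity $g_{s_1}*g_{s_2}=g_{s_1+s_2}$ from Corollary \ref{cor}.

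For item ($i$), I would first dispose of the trivial case $s=0$, where $I_0(f)=f$ and the inequality $|I_0(f)|_p\leq |f|_p$ holds with the convention $m^{0}=1$. For $s>0$, I would observe that the integrand in the definition \eqref{Besselkernel} of $g_s$ is pointwise non-negative, so $g_s\geq 0$ a.e., and hence
\[|g_s|_1=\int_{\mathbb{R}^N}g_s(x)\,\dd x.\]
Since $g_s\in L^1(\mathbb{R}^N)$ by Proposition \ref{prop1}($i$), its Fourier transform is continuous, and evaluating at $\xi=0$ gives, by Proposition \ref{prop1}($ii$),
\[|g_s|_1=\hat{g}_s(0)=(m^2+0)^{-s}=m^{-2s}.\]
Applying Young's inequality for convolutions to $I_s(f)=g_s*f$, I obtain
\[|I_s(f)|_p\leq |g_s|_1\,|f|_p=\frac{1}{m^{2s}}|f|_p,\]
which establishes both that $I_s(f)\in L^p(\mathbb{R}^N)$ and the claimed bound.

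For item ($ii$), if either $s_1=0$ or $s_2=0$ the identity is immediate from the definition. For $s_1,s_2>0$ and $f\in L^p(\mathbb{R}^N)$, since $g_{s_2}*f\in L^p(\mathbb{R}^N)$ by ($i$) and $g_{s_1}\in L^1(\mathbb{R}^N)$, associativity of convolution (valid under these integrability hypotheses by Fubini's theorem) together with Corollary \ref{cor} yields
\[I_{s_1}(I_{s_2}(f))=g_{s_1}*(g_{s_2}*f)=(g_{s_1}*g_{s_2})*f=g_{s_1+s_2}*f=I_{s_1+s_2}(f).\]

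The only mildly delicate point is justifying $g_s\geq 0$ and the application of Fubini for associativity of convolution across the three cases $p=1$, $1<p<\infty$, and $p=\infty$; the positivity is immediate from the integral representation of $g_s$, and associativity is standard once $g_{s_1},g_{s_2}\in L^1$ and $f\in L^p$. No deeper obstacle is expected beyond these routine verifications.
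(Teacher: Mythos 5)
Your proposal is correct and follows exactly the route the paper intends: the paper omits the details, merely attributing the result to Corollary \ref{cor} and to standard adaptations of \cite{Mizuta,Stein}, and your argument supplies precisely those details — the positivity of $g_s$ plus Proposition \ref{prop1}($ii$) giving $|g_s|_1=\hat{g}_s(0)=m^{-2s}$, Young's inequality for ($i$), and associativity of convolution combined with the semigroup identity $g_{s_1}*g_{s_2}=g_{s_1+s_2}$ for ($ii$). No gaps.
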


\begin{definition}
For any $s>0$ and $1\leq p\leq\infty$, we define
\[L^{s,p}(\mathbb{R}^N)=\{g_s*f\,:\,f\in L^p(\mathbb{R}^N)\}.\]
If $u=g_s*f\in L^p(\mathbb{R}^N)$, we also define
\[\|u\|_{s,p}=|f|_p.\]
\end{definition}

The space $L^{s,p}(\mathbb{R}^N)$ is Banach, see \cite{Mizuta}.

\begin{remark}
Since
\[\mathcal{F}\left((-\Delta+m^2)u\right)(\xi)=(m^2+4\pi^2|\xi|^2)^s\hat{u}(\xi),
\]
it follows from Proposition \ref{prop1} that
\begin{align*}\hat{u}(\xi)&=(m^2+4\pi2|\xi|^2)^{-s}\mathcal{F}\left((-\Delta+m^2)^su\right)(\xi)=\hat{g}_s(\xi)\mathcal{F}\left((-\Delta+m^2)^su\right)(\xi)\\
&=\mathcal{F}\left(g_s*(-\Delta+m^2)^su\right)(\xi),
\end{align*}
from what follows
\[u=g_s*(-\Delta+m^2)^su.\]
	
Therefore, $u$ solves
\[(-\Delta+m^2)^su=f(u)\]
if, and only if,
\[u=g_s*f(u).\]
\end{remark}

We now state a result proven in \cite[Theorem 9]{MaChen}:
\begin{theorem}\label{tMa}
Let $q>\max\{\beta,\frac{N(\beta-1)}{\alpha}\}$. If $f\in L^{q/\beta}(\mathbb{R}^N)$, then $I_s(f)\in L^q(\mathbb{R}^N)$. Moreover, we have the estimate
\[|I_s(f)|_q\leq C|f|_{q/\beta},\]
where $C=C(\alpha,\beta,N,q)$
\end{theorem}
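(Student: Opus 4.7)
The plan is to deduce the boundedness $I_s \colon L^{q/\beta}(\mathbb{R}^N) \to L^q(\mathbb{R}^N)$ by combining the identity $I_s(f) = g_s * f$ with Young's convolution inequality, after establishing a family of $L^r$ bounds for the Bessel kernel $g_s$. Throughout, I interpret the parameter $\alpha$ as $2s$, consistent with hypothesis $(s_2)$ of Theorem \ref{tsymmetry} and the definition \eqref{Besselkernel}.

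The first step is to show that $g_s \in L^r(\mathbb{R}^N)$ for every $r$ in the range $1 \leq r < \frac{N}{N-2s}$. I would do this directly from the representation \eqref{Besselkernel}, splitting the $\delta$-integral at $\delta = 1$: the portion on $(0,1)$, after the change of variables $\delta = \pi |x|^2 / t$, produces the Riesz-type singularity $g_s(x) \sim c_{s,N}|x|^{2s-N}$ as $x \to 0$, while the portion on $(1,\infty)$ combined with the factor $e^{-m^2 \delta/(4\pi)}$ produces exponential decay of $g_s(x)$ as $|x| \to \infty$. Finiteness of $|g_s|_r^r$ then reduces to local integrability of $|x|^{(2s-N)r}$ near the origin, which holds precisely when $(2s-N)r + N > 0$, i.e.\ $r < N/(N-2s)$.

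The second step is a direct application of Young's convolution inequality with exponents $r$, $q/\beta$, $q$ satisfying
\[
\frac{1}{r} + \frac{\beta}{q} = 1 + \frac{1}{q},
\qquad\text{hence}\qquad r = \frac{q}{q-\beta+1}.
\]
The hypothesis $q > \beta$ implies $r \geq 1$, while the hypothesis $q > N(\beta-1)/(2s)$ is equivalent, by the short manipulation $q(N-2s) < N(q-\beta+1)$, to $r < N/(N-2s)$. Thus $r$ lies in the admissible range of Step~1, $|g_s|_r < \infty$, and Young's inequality yields
\[
|I_s(f)|_q = |g_s * f|_q \leq |g_s|_r \, |f|_{q/\beta},
\]
with $C = |g_s|_r$ depending only on $\alpha, \beta, N, q$ (and the fixed parameters $m$ and $s$).

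The principal obstacle is Step~1: extracting the sharp local profile $g_s(x) \sim c|x|^{2s-N}$ from \eqref{Besselkernel}. After the substitution $\delta = \pi|x|^2/t$ one obtains
\[
g_s(x) = \frac{|x|^{2s-N}}{(4\pi)^s \Gamma(s)\, \pi^{(N-2s)/2}} \int_0^\infty e^{-t} e^{-m^2 \pi |x|^2/(4\pi t)} \, t^{(N-2s)/2 - 1} \dd t,
\]
and as $x \to 0$ dominated convergence identifies the leading constant with $\Gamma((N-2s)/2)$ times the other constants. The remaining care is verifying that the $t \to 0$ tail (corresponding to the exponential decay regime in $\delta$) is dominated uniformly in a neighborhood of the origin, so that the local asymptotic is not spoiled; the exponential decay as $|x| \to \infty$ then follows from the same formula by classical saddle-point analysis on $e^{-t - m^2 \pi |x|^2/(4\pi t)}$. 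With these kernel estimates in hand, Steps~2--3 are immediate.
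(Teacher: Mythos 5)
Your argument is correct, but it is worth noting that the paper does not prove Theorem \ref{tMa} at all: it is quoted as Theorem 9 of Ma--Chen \cite{MaChen}, so you have supplied a self-contained proof where the paper relies on a citation. What you give is the standard route for such Bessel-potential bounds (and essentially the one underlying \cite{MaChen}): an $L^r$ estimate for the kernel $g_s$ for $1\le r<N/(N-2s)$, followed by Young's inequality with $\tfrac1r+\tfrac{\beta}{q}=1+\tfrac1q$, and your verification that $q>N(\beta-1)/(2s)$ is exactly $r<N/(N-2s)$ is the right bookkeeping. Two small points. First, $r=q/(q-\beta+1)\ge 1$ is equivalent to $\beta\ge 1$, not to $q>\beta$ (the hypothesis $q>\beta$ only guarantees $q-\beta+1>0$ and $q/\beta>1$); this is harmless here because the theorem is applied with $\beta\in(1,2^{*}_s-1)$, but the justification as written is misattributed. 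Second, the ``principal obstacle'' you flag in Step 1 is lighter than you suggest: after the substitution $\delta=\pi|x|^2/t$ the factor $e^{-m^2|x|^2/(4t)}\le 1$ gives the integrable dominating function $e^{-t}t^{(N-2s)/2-1}$ (using $N>2s$) uniformly in $x$, so the local profile $g_s(x)\le C|x|^{2s-N}$ needs no further care; and the exponential decay at infinity follows without any saddle-point analysis from $\pi|x|^2/\delta+m^2\delta/(4\pi)\ge m|x|$ applied directly in \eqref{Besselkernel}. Finally, your constant $C=|g_s|_r$ does depend on $m$ and $s$ (you acknowledge this), whereas the quoted statement writes $C=C(\alpha,\beta,N,q)$ with $\alpha=2s$ and the mass normalized; this is a difference of normalization, not of substance.
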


Let us consider the problem 
\begin{equation}\label{Rad1}
(-\Delta + m^2)^{s} u = f(u) \ \ \ \mbox{in} \ \ \ \mathbb{R}^{N}.
\end{equation}
where $0<s<1$, $N>2s$, $m \in \mathbb{R}\setminus\{0\}$ and $f\colon [0,\infty) \rightarrow \mathbb{R}$ a continuous function that satisfies
\begin{enumerate}
\item[$(s_1)$] $f'(t) \geq 0 $ and $f''(t) \geq 0$ for all $t \in [0,\infty)$.
\item[$(s_2)$] For any given $\beta\in (1, 2^{*}_{s} - 1)$, there exists $q \in [2,2^{*}_{s}]$ with $q > \max\{\beta,\frac{N(\beta-1)}{2s}\}$ such that  $f'(w) \in L^{q/(\beta-1)}(\mathbb{R}^{N}), \ \ \forall w \in H^{s}(\mathbb{R}^{N})$.
\end{enumerate}

We give some examples of functions satisfying our hypotheses ($s_1$) and ($s_2$):
\begin{enumerate}
\item[$(1)$] For any $\alpha\in (1,2^{*}_s-1)$, the function $f(t) = t^{\alpha}$ clearly fulfill ($s_1$). Taking $\beta=\alpha$, then 
\[\int_{\mathbb{R}^{N}}|f'(w)|^{\frac{q}{\beta-1}}\dd x = \alpha^{\frac{q}{\beta-1}}\int_{\mathbb{R}^{N}}|w|^{q} \dd x  < \infty,\quad\forall\, q \in [2,2^{*}_{s}].
\]
	
\item[$(2)$] Also for  $f(t)=t^{\alpha} + t^{\gamma}$, where $\alpha, \gamma\in (1,2^{*}_{s}-1)$, condition ($s_1$) is verified. Furthermore, choosing $\beta = \max\{\alpha,\gamma\} \in (1, 2^{*}_{s}-1)$,since
\begin{align}\label{Rad2}
\int_{\mathbb{R}^{N}} |f'(w)|^{\frac{q}{\beta-1}} \dd x \leq C\int_{\mathbb{R}^{N}} \left(|w|^{q\left(\frac{\alpha-1}{\beta-1}\right)} + |w|^{q\left(\frac{\gamma-1}{\beta-1}\right)}\right)\dd x,
\end{align}
there exists $q \in \left( \max\{\beta, \frac{N(\beta-1)}{2s}\}, 2^{*}_{s}\right)$ such that $2<q\left(\frac{\alpha-1}{\beta-1}\right) < 2^{*}_{s}$ and $2<q\left(\frac{\alpha-1}{\gamma-1}\right) < 2^{*}_{s}$. Thus, if $w \in H^{s}(\mathbb{R})$, then \eqref{Rad2} and the Sobolev immersions imply that $f'(w) \in L^{q/(\beta-1)}(\mathbb{R}^{N})$.
\item[$(3)$] Consider $f(t)= t\ln(1+t)$, for $t \in [0,\infty)$. Since
\[f'(t) = \ln(1+t) + \frac{t}{1+t} \geq 0\quad\text{and}\quad f''(t) = \frac{1}{1+t} + \frac{1}{(1+t)^{2}} \geq 0
\]
for all $t \in [0,\infty)$, we have ($s_1$). Since $f'(t) \leq 2t$ if $t\geq 0$, we have $f'(w) \in L^{q/(\beta-1)}(\mathbb{R}^{N})$ for any $1< \beta < 2^{*}_{s}-1$ and $q > \max\{\beta,\frac{N(\beta-1)}{2s} \}$.
\end{enumerate}\vspace*{.2cm}

We now apply the moving planes technique in integral form to show that any positive solution of \eqref{Rad1} is radially symmetric. We start fixing some notation. 

For any $\lambda \in \mathbb{R}$, define
\begin{align*}
\Sigma_{\lambda} &= \{x= (x_1,x_2,...,x_N)\in\mathbb{R}^N\,:\, x_{1} \leq \lambda \}, \\
T_{\lambda} &= \{ x\in\mathbb{R}^N\,:\, x_1 = \lambda \},\\
x_\lambda&=(2\lambda - x_1,x_2,...,x_{N}),\ \ \text{if }\ x \in \Sigma_{\lambda},\\ 
u_{\lambda}(x)&= u(x_{\lambda}).
\end{align*} 

\begin{lemma}\label{Lema1}
For any positive solution $u(x)$ of \eqref{Rad1} we have
\begin{equation}\label{Rad3}
u(x)- u_{\lambda}(x) = \int_{\Sigma_{\lambda}}\left(g_{s}(x-y) - g_{s}(x_{\lambda} - y)\right)\left(f(u(y)) - f(u_{\lambda}(y))\right)\dd y.
\end{equation}
\end{lemma}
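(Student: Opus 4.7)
The plan is to use the integral-equation reformulation given in the Remark above: since $u \in H^{s}(\mathbb{R}^{N})$ solves $(-\Delta+m^2)^s u = f(u)$, we have
\[u(x) = (g_s * f(u))(x) = \int_{\mathbb{R}^N} g_s(x-y)\,f(u(y))\,\mathrm{d}y.\]
From this representation, the desired identity is a purely algebraic manipulation that exploits two features of the setting: the Bessel kernel $g_s$ is radial (clear from its definition \eqref{Besselkernel}), and the reflection $y \mapsto y_\lambda$ across the hyperplane $T_\lambda$ is an isometry of $\mathbb{R}^N$ that interchanges $\Sigma_\lambda$ and its complement.

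The first step is to record the two elementary distance identities produced by reflection: $|x - y_\lambda| = |x_\lambda - y|$ and $|x_\lambda - y_\lambda| = |x - y|$, both of which follow immediately from the formula $y_\lambda = (2\lambda - y_1, y_2, \dots, y_N)$. Combined with the radiality of $g_s$, these give
\[g_s(x - y_\lambda) = g_s(x_\lambda - y), \qquad g_s(x_\lambda - y_\lambda) = g_s(x - y).\]
I would also note that $u(y_\lambda) = u_\lambda(y)$ and the change of variable $y \mapsto y_\lambda$ has Jacobian one and maps $\mathbb{R}^N \setminus \Sigma_\lambda$ bijectively onto $\Sigma_\lambda$.

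The second step is to split the convolution and apply this change of variables. Writing
\[u(x) = \int_{\Sigma_\lambda} g_s(x-y) f(u(y))\,\mathrm{d}y + \int_{\mathbb{R}^N \setminus \Sigma_\lambda} g_s(x-y) f(u(y))\,\mathrm{d}y,\]
the substitution $y \mapsto y_\lambda$ in the second integral, together with the identities above, converts it into $\int_{\Sigma_\lambda} g_s(x_\lambda - y) f(u_\lambda(y))\,\mathrm{d}y$. Applying exactly the same decomposition to $u_\lambda(x) = u(x_\lambda) = \int_{\mathbb{R}^N} g_s(x_\lambda - y) f(u(y))\,\mathrm{d}y$ yields
\[u_\lambda(x) = \int_{\Sigma_\lambda} g_s(x_\lambda - y) f(u(y))\,\mathrm{d}y + \int_{\Sigma_\lambda} g_s(x-y) f(u_\lambda(y))\,\mathrm{d}y.\]
Subtracting these two expressions, the cross terms combine to produce precisely the required identity \eqref{Rad3}.

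There is no real obstacle here: the argument is a bookkeeping exercise once the Bessel potential representation $u = g_s * f(u)$ is in hand, and the only structural inputs are the radiality of $g_s$ and the isometric character of reflection across $T_\lambda$. The mild point worth checking is that the integrals involved are absolutely convergent, which follows from $g_s \in L^1(\mathbb{R}^N)$ (Proposition \ref{prop1}) and the standing growth/integrability conditions on $f(u)$ coming from $u \in H^s(\mathbb{R}^N)$ together with hypotheses $(s_1)$--$(s_2)$, so that Fubini is available to justify the change of variables.
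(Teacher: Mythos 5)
Your proposal is correct and follows essentially the same route as the paper: both write $u=g_s*f(u)$, split the convolution over $\Sigma_\lambda$ and its complement, reflect via $y\mapsto y_\lambda$ using the radiality of $g_s$ and the identities $|x-y_\lambda|=|x_\lambda-y|$, $|x_\lambda-y_\lambda|=|x-y|$, then apply the same decomposition to $u_\lambda(x)=u(x_\lambda)$ and subtract. Your extra remark on absolute convergence via $g_s\in L^1(\mathbb{R}^N)$ is a harmless (and welcome) addition that the paper leaves implicit.
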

\begin{proof}
Since $u$ is a solution of \eqref{Rad1}, it holds
\[u(x) = \left[g_s*(-\Delta + m^{2})^{s}u\right](x) =\left(g_s*f(u)\right)(x)=\int_{\mathbb{R}^N}g_s(x-y)f(u(y))\dd y,\]
where $g_{s}$ is the modified Bessel kernel \eqref{Besselkernel}.
	
Thus, the change of variables $y \mapsto y_{\lambda}$ yields
\begin{align*}
u(x)&=\int_{\Sigma_{\lambda}}g_{s}(x-y)f(u(y))\dd y + \int_{\Sigma_{\lambda}^{c}}g_{s}(x-y)f(u(y))\dd y\\
&=\int_{\Sigma_{\lambda}}g_{s}(x-y)f(u(y))\dd y + \int_{\Sigma_{\lambda}^{c}}g_{s}(x-y_{\lambda})f(u(y_{\lambda}))\dd y \\
&=\int_{\Sigma_{\lambda}}\left(g_{s}(x-y)f(u(y)) + g_{s}(x_{\lambda}-y)f(u_{\lambda}(y))\right)\dd y
\end{align*}
the last equality being a consequence of the fact that  $g_{s}$ is radially symmetric and $|x_{\lambda}-y|=|x - y_{\lambda}|$. 
	
In the last equality, changing $x$ for $x_{\lambda}$, since $g_{s}$ is radial and $|x-y|=|x_{\lambda}-y_{\lambda}|$, we obtain
\begin{align*}
u(x)=\int_{\Sigma_{\lambda}}\left(g_{s}(x_{\lambda}-y)f(u(y)) + g_{s}(x_{\lambda}-y)f(u_{\lambda}(y))\right)\dd y
\end{align*}
and our proof is complete.
$\hfill\Box$\end{proof}


\textbf{Proof of Theorem \ref{tsymmetry}}: 

In \textbf{Step 1}, we show that, for any negative $\lambda$, if $|\lambda|$ is big enough, then 
\begin{equation}\label{Rad4}
u(x) < u_{\lambda}(x).
\end{equation}

Therefore, we can move the plane $T_{\lambda}$ in the $x_1$-axis from a neighborhood of $-\infty$ to the right until \eqref{Rad4} remains valid. 

In \textbf{Step 2} we will show that above translation of $T_\lambda$ is possible until we reach $x_1=0$. Thus, we conclude that $u(x)\leq u_{0}(x)$ for any $x\in \Sigma_{0}$. 

However, the same process of Steps 1 and 2 can be repeated, moving the plane $T_\lambda$ from a neighborhood of $+\infty$ to the left in the $x_1$-axis,  thus concluding that $u(x)\geq u_{0}(x)$ for any $x \in \mathbb{R}^{N} \setminus \Sigma_{0}$. 

Thus, we conclude that $u$ is symmetric in relation to the plane $T_0$. Since the direction $x_1$ was arbitrarily chosen, we conclude that $u$ is symmetric with respect to any axis. Changing coordinates, we conclude that $u$ is symmetric and decreasing with respect to the origin in any direction.

\textbf{Step 1}. For $\lambda$ negative enough we have 
\begin{equation}\label{Rad5}
u(x) \leq u_{\lambda}(x), \forall x\in \Sigma_{\lambda}.
\end{equation}

Let us denote
\begin{equation}
\Sigma_{\lambda}^{-} = \left\{x \in \Sigma_{\lambda}; u_{\lambda}(x) < u(x) \right\}.
\end{equation}

Lemma \ref{Lema1} yields
\begin{align*}
u(x) - u_{\lambda}(x) &=\int_{\Sigma_{\lambda}\setminus \Sigma_{\lambda}^{-}}\left(g_{s}(x-y) - g_{s}(x_{\lambda} - y)\right)\left(f(u(y)) - f(u_{\lambda}(y))\right)\dd y \\	&\qquad +\int_{\Sigma_{\lambda}^{-}}\left(g_{s}(x-y) - g_{s}(x_{\lambda} - y)\right)\left(f(u(y)) - f(u_{\lambda}(y))\right)\dd y.
\end{align*}

Since  $|x_{\lambda} - y|>|x-y|$, $g_{s}$ is both positive and radially decreasing, and $f$ satisfies $(f_1)$, we have $g_{s}(x_{\lambda}-y) \leq g_{s}(x-y)$ and $f(u(y)) \geq f(u_{\lambda}(y)), \forall y \in \Sigma \setminus \Sigma_{\lambda}^{-}$. Thus,
\begin{align}\label{Est2}
u(x) - u_{\lambda}(x) \leq \displaystyle\int_{\Sigma_{\lambda}^{-}}g_{s}(x-y)\left(f(u(y) -f(u_{\lambda}(y)\right) \dd y.	
\end{align}

It follows from the mean value theorem the existence of $\theta \in (0,1]$ such that, for all $y \in \Sigma_{\lambda}^{-}$ we have
\[f(u(y)) - f(u_{\lambda}(y)) = f'\left(u(y) + \theta[ u_{\lambda}(y)-u(y)]\right)\left( u(y) - u_{\lambda}(y)\right).\]

Since $(f_1)$ implies that $f'$ is increasing, \[f'\left(u(y) + \theta[ u_{\lambda}(y)-u(y)]\right) \leq f'(u_{\lambda}(y)).\]

Substituting this estimate in \eqref{Est2} yields 
\begin{align*}
u(x) - u_{\lambda}(x) \leq \displaystyle\int_{\Sigma_{\lambda}^{-}}g_{s}(x-y) f'(u_{\lambda}(y)) \left( u(y) - u_{\lambda}(y)\right)\dd y.
\end{align*}

Now, fix $\beta = \frac{4s}{N} + 1$ and consider $q >\max\{\frac{4s}{N} + 1, 2\} = \max\{\beta,\frac{N(\beta-1)}{2s} \}$ (as given by hypothesis $(f_2)$). According to Theorem \ref{tMa} we have
\begin{align}\label{Rad7}
|u - u_{\lambda}|_{L^{q}(\Sigma_{\lambda}^{-})} &\leq C\left|I_{s}\left(f'(u_{\lambda})\left(u-u_{\lambda}\right)\right) \right|_{L^{q}(\Sigma_{\lambda}^{-})} \nonumber \\
&\leq C \left|f'(u_{\lambda}) \left( u - u_{\lambda}\right) \right|_{L^{\frac{q}{\beta}}(\Sigma_{\lambda}^{-})}\nonumber \\ 
& \leq C \left| f'(u_{\lambda}) \right|_{L^{\frac{q}{\beta-1}}(\Sigma_{\lambda}^{-})} \left| u - u_{\lambda}\right|_{L^{q}(\Sigma_{\lambda}^{-})}\\ 
& \leq C \left|f'(u_{\lambda}) \right|_{L^{\frac{q}{\beta-1}}(\Sigma_{\lambda})} \left| u - u_{\lambda} \right|_{L^{q}(\Sigma_{\lambda}^{-})}\nonumber \\
&= C\left|f'(u) \right|_{L^{\frac{q}{\beta-1}}(\Sigma_{\lambda}^{c})} \left|u - u_{\lambda}\right|_{L^{q}(\Sigma_{\lambda}^{-})},\nonumber 
\end{align}
the last inequality being a consequence of the change of variables $x\mapsto x_{\lambda}$. 

Since $(f_2)$ implies that $f'(u) \in L^{\frac{q}{\beta-1}}(\mathbb{R}^{N})$, there exists $N_0 > 0$ big enough so that  
\begin{equation}\label{Rad8}
\lambda \leq -N_{0}\quad \Rightarrow\quad   C\left| f'(u) \right|_{L^{\frac{q}{\beta-1}}(\Sigma_{\lambda}^{c})} \leq \frac{1}{2}.
\end{equation}

Applying \eqref{Rad8} in  \eqref{Rad7} produces $\left| u - u_{\lambda} \right|_{L^{q}(\Sigma_{\lambda}^{-})} = 0$ for any $\lambda \leq - N_0$. Thus, $\Sigma_{\lambda}^{-}$ has null measure for any $\lambda$ negative enough. 

\textbf{Step 2}. Let us suppose that the plane $T_\lambda$ can be moved to the right until $\lambda_{0} < 0$. If there exists $x^{*} \in \Sigma_{\lambda_0}$ such that $u(x^{*}) = u_{\lambda_0}(x^{*})$, then it follows from Lemma \ref{Rad3} that 
\begin{align}\label{Est}
0 &= u(x^{*}) - u_{\lambda_0}(x^{*})\nonumber\\ &=\int_{\Sigma_{\lambda_0}} \left(g_{s}(x^{*}-y) - g_{s}(x^{*}_{\lambda_{0}} - y) \right)\left( f(u(y)) - f(u_{\lambda_{0}}(y))\right) \dd y.
\end{align}

Since $g_{s}$ is radially decreasing and $|x^{*} - y| > |x^{*}_{\lambda_{0}} -y|$ in $\Sigma_{\lambda_0}$, then $g_{s}(x^{*} - y) < g_{s}(x^{*}_{\lambda_{0}}-y)$, from what follows
\begin{equation}\label{Rad9}
f(u(y)) = f(u_{\lambda_0}(y)), \ \ \forall\,y \in \Sigma_{\lambda_0}.
\end{equation}

According $(f_1)$,  \eqref{Rad9} only happens if $u(y) = u_{\lambda_0}(y)$ for all $y \in \Sigma_{\lambda_0}$. In this case, \eqref{Est} yields
\begin{equation*}
u(x) \equiv u_{\lambda_{0}}(x) \equiv 0 \ \ \mbox{in} \ \ \Sigma_{\lambda_0}.
\end{equation*}

This implies that $u(x) \equiv 0$, contradicting the fact that $u$ is a positive solution. By Step 1 we conclude that $\Sigma_{\lambda_{0}}$ has null measure, thus yielding 
\begin{equation*}
u(x) < u_{\lambda_{0}}(x), \ \ \forall\, x \in \Sigma_{\lambda_{0}}.
\end{equation*}

We claim that 
\begin{equation}\label{Rad10}
\lambda_{0} = \sup \left\{\lambda ; \  u(x) \leq u_{\lambda}(x), \ \forall x \in \Sigma_{\lambda}\right\} = 0.
\end{equation}

Supposing the contrary, that is $\lambda_{0} < 0$, we show that the plane $T_{\lambda_{0}}$ can be moved to the right, contradicting the definition of $\lambda_{0}$. 

Since $f'(u) \in L^{q/(\beta-1)}(\mathbb{R}^{N})$ it follows that, for any $\epsilon>0$ small enough, there exists $R>0$ big enough so that 
\begin{equation*}
\int_{\mathbb{R}^{N}\setminus B_{R}(0)} |f'(u) |^{\frac{q}{\beta-1}} \dd x < \epsilon.
\end{equation*}

Applying Lusin's theorem, for any $\delta > 0$ there exists a closed set $F_{\delta}$ such that $(u-u_{\lambda_{0}})\big|_{F_{\delta}}$ is continuous, with $F_{\delta} \subset B_{R}(0) \cap \Sigma_{\lambda_{0}} = E$ and $\mu(E-F_{\delta}) < \delta$. 

Since $u(x) < u_{\lambda_{0}}(x)$ in the interior of  $\Sigma_{\lambda_{0}}$, we obtain that $u(x) < u_{\lambda_{0}}(x)$ in $F_{\delta}$. 

Choose $\epsilon_1 > 0$ small enough so that, for any  $\lambda \in [\lambda_0, \lambda_{0}+\epsilon_1)$,
\[u-u_{\lambda} < 0, \quad\forall\, x \in F_{\delta}.\]

It follows that  
\begin{equation*}
\Sigma_{\lambda}^{-} \subset M:= \left(\mathbb{R}^{N}\backslash B_{R}(0)\right) \cup \left(E\setminus F_{\delta}\right) \cup \left[\left(\Sigma_{\lambda} \setminus \Sigma_{\lambda}^{-}\right) \cap B_{R}(0)\right].
\end{equation*}

Now take $\epsilon,\delta$ and $\epsilon_{1}$ small enough  to that, 
\begin{equation*}
C\left| f'(u) \right|_{L^{\frac{q}{\beta-1}(M)}} \leq \frac{1}{2}.
\end{equation*} 

Thus, 
\begin{equation*}
\left| u-u_{\lambda}\right|_{L^{q}(\Sigma_{\lambda}^{-})} \leq C\left|f'(u) \right|_{L^{\frac{q}{\beta-1}(\Sigma_{\lambda}^{-})}} \left|u - u_{\lambda} \right|_{L^{q}(\Sigma_{\lambda}^{-})} \leq \frac{1}{2} \left| u - u_{\lambda} \right|_{L^{q}(\Sigma_{\lambda}^{-})}.
\end{equation*}

It follows that $\Sigma_{\lambda}^{-}$ has null measure and therefore $u(x) \leq u_{\lambda}(x)$ for any $x \in \Sigma_{\lambda}$, contradicting the definition of $\lambda_{0}$. Thus, claim \eqref{Rad10} is proved.
We are done.
$\hfill\Box$

\textbf{Acknowledgements:} H. Bueno dedicates this paper to Stephan Luckhaus on his 65th. birthday. All the authors thank Olimpio Miyagaki and Minbo Yang for useful conversations.


\begin{thebibliography}{99}
\bibitem{Ambrosio} V. Ambrosio, \textit{Ground states solutions for a non-linear equation involving a pseudo-relativistic Schrödinger operator}, J. Math. Phys. \textbf{57} (2016), no. 5, 051502, 18 pp. 
\bibitem{Applebaum} D. Applebaum, \textit{L\'evy processes - from probability to finance and quantum groups}, Notices Amer. Math. Soc. \textbf{51} (2004), 1336-1347.
\bibitem{BBMP}P. Belchior, H. Bueno, O.H. Miyagaki and G. Pereira, \textit{Asymptotic behavior of ground states of generalized pseudo-relativistic Hartree equation}, ArXiv: 1802.03963
\bibitem{Brandle}C. Br\"andle, E. Colorado, A. de Pablo and U. S\'anchez: \textit{A Concave-convex elliptic problem involving the fractional Laplacian}, Proc. Roy. Soc. Edinburgh Sect. A \textbf{143} (2013), no. 1, 39-71.
\bibitem{BMP} H. Bueno, O.H. Miyagaki and G.A. Pereira, \textit{Remarks about a generalized pseudo-relativistic Hartree equation}, J. Differential Equations \textbf{266} (2019), vol. 1, 876-909.
\bibitem{Cabre}	X. Cabr\'e and J. Sol\`a-Morales, \textit{Layer solutions in a half-space for boundary reactions}, Comm. Pure Appl. Math. \textbf{58} (2005), no. 12, 1678-1732.
\bibitem{CabreSire} X. Cabr\'e and Y. Sire, \textit{Nonlinear equations for fractionary Laplacians, I: Regularity, maximum principles and Hamiltonian estimates}, Ann. I. H. Poincaré - AN \textbf{31} (2014), 23-53. 
\bibitem{Caffarelli}L. Caffarelli and L. Silvestre, \textit{An extension problem related to the fractionary Laplacian}, Comm. Partial Differential Equations \textbf{32 (7-9)} (2007),  1245-1260.
\bibitem{Capella}A. Capella, J. D\'avila, L. Dupaigne and Y. Sire, \textit{Regularity of radial extremal solutions for some non-local semilinear equations}, Comm. Partial Differential Equations \textbf{36} (2011), 1353-1384.
\bibitem{Carmona} R. Carmona, W. C. Masters and B. Simon, \textit{Relativistic Schrödinger operators: Asymptotic behavior of the eigenfunctions}, J. Func. Anal \textbf{91} (1990), 117-142.
\bibitem{ChangWang} X. Chang and Z-Q. Wang: \textit{Ground state of scalar field equations involving a fractionary Laplacian with general nonlinearity}, Nonlinearity \textbf{26} (2013), no. 2, 479–494.
\bibitem{ChenLiOu}W. Chen, C. Li and B. Ou, \textit{Classification of solutions for a system of integral equations}, Comm. Partial Differential Equations \textbf{30} (2005) 59–65.
\bibitem{Cho} Y. G. Cho and T. Ozawa, \textit{On the  semirelativistic Hartree-type equation}, SIAM J. Math. Anal. \textbf{38} (2006), no. 4, 1060-1074.
\bibitem{Cingolani} S. Cingolani and S. Secchi, \textit{Ground states for the pseudo-relativistic Hartree equation with external potential}, Proc. Roy. Soc. Edinburgh Sect. A \textbf{145} (2015), no. 1, 73-90.
\bibitem{CSS} S. Cingolani, S. Secchi and  M. Squassina, \textit{Semi-classical limit for Schr\"odinger equations with magnetic field and Hartree-type nonlinearities}, Proc. Roy. Soc. Edinburgh Sect. A. \textbf{140} (2010), 973-1009.
\bibitem{Cotsiolis}A. Cotsiolis and N.K. Tavoularis, \textit{Best constants for Sobolev inequalities for higher order fractionary derivatives}, J. Math. Anal. Appl. \textbf{295} (2004), 225-236.
\bibitem{ZelatiNolasco}V. Coti Zelati and M. Nolasco, \textit{Existence of ground states for nonlinear, pseudo-relativistic Schr\"odinger equations}, Rend. Lincei Mat. Appl. \textbf{22} (2011), 51-72.
\bibitem{ZelatiNolasco2} V. Coti Zelati and  M. Nolasco, \textit{Ground states for pseudo-relativistic Hartree equations of critical type}, Rev. Mat. Iberoam. \textbf{22} (2013), 1421-1436.
\bibitem{Demengel}F. Demengel and G. Demengel, Functional Spaces for the theory of elliptic partial differential equations, Springer, London, 2012.
\bibitem{Guide} E. Di Nezza,  G. Palatucci and E. Valdinoci, \textit{Hitchhiker's guide to the fractionary Sobolev spaces}, Bull. Sci. Math. \textbf{136} (2012), no. 5, 521-573.
\bibitem{Elgart} A. Elgart and B. Schlein, \textit{Mean field dynamics of boson stars}, Comm. Pure Appl. Math.  \textbf{60} (2007), no. 4,  500-545.
\bibitem{Ekeland}I. Ekeland, Convexity Methods in Hamiltonian Mechanics, Springer-Verlag, Berlin, 1990.
\bibitem{VFelli}M. M. Fall and V. Felli: \textit{Unique continuation properties for relativistic Schrödinger operators with a singular potential}, 
\bibitem{Felmer} P. Felmer, A. Quaas and J. Tan, \textit{Positive solutions of the nonlinear Schr\"odinger equation with the fractionary Laplacian},  Proc. Roy. Soc. Edinburgh Sect. A \textbf{142} 6 (2012),  1237-1262.
\bibitem{Garofalo} N. Garofalo, \textit{Fractional thoughts}, arXiv: 1712.03347.
\bibitem{Ghoussoub} N. Ghoussoub and D. Preiss, \textit{A general mountain pass principle for locating and classifying critical points}, Ann. Inst. H. Poincaré Anal. Non Linéaire \textbf{6} (1989), no. 5, 321-330. 
\bibitem{LehrerMaia}R. Lehrer and L.A. Maia, \textit{Positive solutions of asymptotically linear equations via Pohozaev manifold}, J. Funct. Anal. \textbf{266} (2014), 213-246.
\bibitem{LiebLoss} E.H. Lieb and M. Loss, Analysis, 2nd. edition, Graduate Studies in Mathematics, vol. 14, American Mathematical Society, Providence, Rhode Island, 2001.
\bibitem{Lieb2}E.H. Lieb and  H.-T. Yau, \textit{The Chandrasekhar theory of stellar collapse as the limit of quantum mechanics}, Comm. Math. Phys. \textbf{112} (1987),  147-174.
\bibitem{MaChen}L. Ma and D. Chen, \textit{Radial symmetry and monotonicity for an integral equation}, J. Math. Anal. Appl. \textbf{342} (2008), 943–949
\bibitem{MaZhao}L. Ma and L. Zhao, \textit{Classification of Positive Solitary Solutions of the Nonlinear Choquard Equation}, Arch. Rational Mech. Anal. \textbf{195} (2010), 455-467.
\bibitem{Mizuta} Y. Mizuta, Potential theory in Euclidean spaces, Gakuto International Series, Mathematical Sciences and Applications Volume 6, Gakk\={o}tosho, Tokyo, Japan, 1996.
\bibitem{Bisci} G. Molica Bisci, V. D. Radulescu and R. Servadei, Variational Methods for Nonlocal Fractional Problems, Encyclopedia of Mathematics and its Applications (162), Cambridge University Press, Cambridge, United Kingdom, 2016. 
\bibitem{Moroz1} V. Moroz and  J. Van Schaftingen, \textit{Semi-classical states for the Choquard equations}, Calc. Var. Partial Differential Equations \textbf{52} (2015), no. 1, 199-235.
\bibitem{Moroz2} V. Moroz and  J. Van Schaftingen, \textit{A guide to the Choquard equation}, J. Fixed Point Theory Appl. \textbf{19} (2017), No. 1, 773-813.
\bibitem{Pohozaev}S. I. Pohozaev, \textit{Eigenfunctions of the equations $\Delta u+\lambda f(u)=0$}, Soviet Math. Dokl. \textbf{5} (1965), 1408-1411.
\bibitem{Ryznar} M. Ryznar, \textit{Estimate of Green function for relativistic $\alpha$-stable processes}, Potential Analysis \textbf{17} (2002), 1-23.
\bibitem{Ruiz} D. Ruiz, \textit{The Schr\"odinger–Poisson equation under the effect of a nonlinear local term}, J. Funct. Anal. \textbf{237} (2006), 655-657.
\bibitem{Stein} E. Stein, Singular Integrals and Differentiability Properties of Functions, Princeton University Press, 1970.
\bibitem{StingaTorrea} P.R. Stinga and J.L. Torrea, \textit{Extension problem and Harnack's Inequality for some fractionary operators}, Comm. Partial Differential Equations \textbf{35} (2010), no. 11, 2092-2122.
\bibitem{Willem} M. Willem: Minimax Theorems. Birkh\"auser Boston, Basel, Berlin, 1996.
\bibitem{Xiao}J. Xiao, \textit{A sharp Sobolev trace inequality for the fractionary-order derivatives}, Bull. Sci. math. \textbf{130} (2006), 87-96
\end{thebibliography}
\end{document}